\documentclass[9pt]{article}
%%%%%%%%%パッケージ%%%%%%%
\usepackage{amsmath,amsthm,amssymb,amscd,fancybox}
\usepackage{ascmac}
\usepackage[arrow,matrix]{xy}
\usepackage{enumerate}
\usepackage{mathrsfs} 
\usepackage{graphicx, color}
\usepackage{tikz}
\usepackage{here} 
\date{}
\author{}

%%%%%%%%%%%%%%%%%%%%%%%レイアウト関連%%%%
%%%%%%%%%%%%%%%%%%%%%%%%%%%%%%%%%%%%%%%%%%%%%%
%% 変更しないでください。
%%%%%%%%%%%%%%%%%%%%%%%%%%%
%\setlength{\textwidth}{\fullwidth}
\setlength{\textheight}{53\baselineskip}
\addtolength{\textheight}{\topskip}
\setlength{\voffset}{0pt}
\setlength{\topmargin}{0pt}
\setlength{\headheight}{0pt}
\setlength{\headsep}{0pt}
%%%%%%%%%%%%%%%%%%%%%%%%%%%%%%%%%%%%%%%%%%%%%%%%%%%%%%%%%%%%%%%%%

%%%%%%%%%%%%%%%%%%%%%%%定理関連の書式%%%%%%%
\theoremstyle{definition}
\numberwithin{equation}{section}

%\renewcommand{\qedsymbol}{証明終}

%%%英語が好きな人はこちら。
\newtheorem{thm}{Theorem}[section]
\newtheorem{dfn}[thm]{Definition}
\newtheorem{exa}[thm]{Example}
\newtheorem{prop}[thm]{Proposition}
\newtheorem{cor}[thm]{Corollary}
\newtheorem{lem}[thm]{Lemma}

\newtheorem{rem}[thm]{Remark}

%\renewcommand{\qedsymbol}{□}

%%%%%%%%%%%%%%%%%%%%%%記号関連%%%%%%%

%%%%%%%%%%%%%%%%%%%%%%\mathbf,\mathbb,\mathfrac,\mathcal等%%%%
\newcommand{\mf}[1]{{\mathfrak{#1}}}
\newcommand{\mb}[1]{{\mathbf{#1}}}
\newcommand{\bb}[1]{{\mathbb{#1}}}
\newcommand{\mca}[1]{{\mathcal{#1}}}

\usepackage[utf8]{inputenc}
\usepackage[T1]{fontenc}

\title{Nested Affine Buildings and Their Group Decompositions.}

\author{Masaoki Mori \thanks{u602050e@alumni.osaka-u.ac.jp, masaokimori12ib@gmail.com}}
\date{}

\begin{document}

\maketitle

\begin{abstract}
In this paper, we construct a higher dimensional generalization of affine buildings and introduce a new structure, which we call \textit{Babel buildings}. These buildings are non-connected, non-convex metric spaces of non-positive curvature. Despite their non-standard properties, Babel buildings provide an effective framework for studying the structure of groups acting on them. We analyze the metric and nesting structures of Babel buildings and derive key results regarding the group actions consistent with this new framework.
\end{abstract}

\indent\textbf{  Mathematics Subject Classification (2020):} 51E24, 20E42, 11F85 \\

%\tableofcontents

%\newpage
\section*{Introduction}

The theory of Bruhat-Tits buildings has established itself as a fundamental framework for understanding the structure of reductive algebraic groups over local fields. In their monumental work \cite{BT}, Bruhat and Tits constructed affine buildings as geometric analogues of symmetric spaces. A crucial feature of an affine building is that it carries a canonical metric, making it a CAT(0) space (a complete metric space of non-positive curvature). This geometric property leads to powerful results, such as the fixed point theorem for compact group actions and various decomposition theorems for the associated groups.

In recent years, there has been increasing interest in generalizing this theory to \textit{higher-dimensional local fields}, such as the field of iterated Laurent series $k((t_1))\cdots((t_n))$. For such fields, the valuation group is no longer a subgroup of $\mathbb{R}$, but rather isomorphic to $\mathbb{Z}^n$ equipped with the lexicographic order. Consequently, the associated buildings cannot be metrized by standard real numbers in a way that preserves the full valuation structure. While the combinatorial structure of such buildings has been studied (e.g., as $\Lambda$-buildings in the sense of Bennett or Bennett-Schwer-Struyv, \cite{Ben1}, \cite{Ben2} and \cite{BSS}), a satisfactory metric theory analogous to the classical CAT(0) geometry has been less developed.

While the axiomatic theory of $\Lambda$-buildings provides a rigorous framework, it often obscures the geometric intuition of the space, especially when the valuation group is of higher rank (e.g., $\mathbb{Z}^n$). The metric structure becomes abstract, making it difficult to visualize fundamental geometric operations such as retractions or group decompositions.Motivated by this difficulty, we adopt a constructive approach. By realizing the building as a nested sequence of lower-level buildings, we recover the geometric intuition. In our model, the algebraic residue structure corresponds precisely to the hierarchical 'zooming' between different levels of the building. This concrete visualization allows us to prove results like the Kapranov decomposition, which are hard to perceive from the purely axiomatic viewpoint.

In this paper, we propose a new geometric approach to buildings for higher-dimensional local fields. Instead of projecting the geometry down to $\mathbb{R}$, we extend the concept of the metric itself. We introduce the ring of \textit{hyper-real numbers} of level $n$, denoted by ${}^{n*}\mathbb{R}$, which is essentially a lexicographic product of $n$ copies of $\mathbb{R}$. We define a class of buildings equipped with ${}^{n*}\mathbb{R}$-valued metrics, which we call \textbf{Babel buildings}. The name "Babel" reflects the hierarchical, nested structure of these spaces: an $n$-level Babel building can be viewed as an infinite tower of $(n-1)$-level Babel buildings, culminating in the classical affine buildings at level 1.

The main objective of this paper is to establish that Babel buildings possess "nice" geometric properties parallel to those of affine buildings, despite the hyper-real nature of the metric values.

Our first main result is the construction of a well-defined metric on the Babel building $X$. Since Babel buildings lack gallery connectedness for $n>1$, the existence of a geodesic metric satisfying the triangle inequality is non-trivial. By employing the method of retractions onto apartments, we prove:

\begin{thm}[Main Theorem A]
Let $X$ be an $n$-level Babel building. There exists a canonical distance function $d_X \colon X \times X \longrightarrow {}^{n*}\mathbb{R}$ which satisfies the triangle inequality. Furthermore, the retraction map onto any Babel apartment is distance-decreasing.
\end{thm}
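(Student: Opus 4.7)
The plan is to follow the classical Bruhat--Tits strategy---define the distance locally on apartments, extend by the apartment axiom, and prove the triangle inequality via retractions---adapted to the ${}^{n*}\mathbb{R}$-valued setting and to the tower of $(n-1)$-level sub-buildings making up $X$. I proceed by induction on the level $n$: for $n=1$ this is the classical theorem for affine buildings, and for $n\ge 2$ I assume the analogous statement for $(n-1)$-level Babel buildings as the induction hypothesis.

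On a single Babel apartment $A$, identified with ${}^{n*}\mathbb{R}^d$, the distance is the lexicographic Euclidean norm. For two points $x,y\in X$ one invokes the axiom that they lie in a common apartment $A$, and sets $d_X(x,y):=d_A(x,y)$. Well-definedness then reduces to the standard apartment-isomorphism axiom: any two Babel apartments sharing a pair of points are related by an isomorphism fixing their intersection pointwise, so $d_A(x,y)$ is independent of the choice of $A$.

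The heart of the argument is the construction of the retraction $\rho_{A,C}\colon X \to A$ onto a Babel apartment $A$ based at a Babel chamber $C$, and the proof that it is distance-decreasing in the lexicographic order on ${}^{n*}\mathbb{R}$. I would construct $\rho_{A,C}$ hierarchically: at the top level, a point $x\in X$ is folded onto the top skeleton of $A$ via the apartment isomorphisms supplied by the level-$n$ axioms; each fibre over a top-level facet is an $(n-1)$-level Babel building, onto which the induction hypothesis provides a distance-decreasing retraction. Because ${}^{n*}\mathbb{R}$ is the lexicographic extension of ${}^{(n-1)*}\mathbb{R}$, the top component of $d(x,y)$ already dominates the comparison whenever $x$ and $y$ sit over distinct top-level points, while when they share a top-level point the comparison descends to the fibre and is handled by the inductive retraction.

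With a distance-decreasing retraction in hand, the triangle inequality is obtained by the usual sandwich argument: given $x,y,z$, pick an apartment $A$ containing $y$ and $z$ and a Babel chamber $C$ based at $y$ in the appropriate nested sense, apply $\rho=\rho_{A,C}$, and use that $\rho$ fixes $y$ and $z$ together with the distance-decreasing property to obtain $d_X(x,z) \le d_A(\rho(x),z) \le d_A(\rho(x),y)+d_A(y,z)\le d_X(x,y)+d_X(y,z)$. The main obstacle I expect is the well-definedness and consistency of $\rho_{A,C}$ itself: since Babel buildings are not gallery-connected for $n>1$, one cannot assemble $\rho$ by an inductive fold along a gallery, and the usual combinatorial verification that the local folds patch together must be replaced by a genuinely nested argument leveraging the tower structure. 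Controlling the interaction between the top-level fold and the fibrewise $(n-1)$-level retraction---so that decreasing the high-order component of the lexicographic distance cannot inflate the low-order component---is where I expect the real work to lie.
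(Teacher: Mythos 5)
Your framing of $d_X$ via a common apartment, and its well-definedness from the apartment-isomorphism axiom, matches the paper. But from there the proposal and the paper diverge, and what you have left open is precisely the hard part.

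First, a structural point: the paper does \emph{not} build $\rho_{A,C}$ hierarchically. It defines it directly and classically: for $x\in X$, take any apartment $A'$ containing both $x$ and the chamber $C$ (this exists by axiom (1) of the Babel building), take the isometry $\psi_{A'}\colon A'\to A$ fixing $A\cap A'$ pointwise, and set $\rho(x)=\psi_{A'}(x)$. Well-definedness is immediate from the fact that $C$ contains an affine basis and from the uniqueness clause in Proposition~\ref{HypermetricofBA}(ii) --- no nested fibrewise construction is needed, and the consistency worry you flag for the retraction itself simply does not arise. Your heavier hierarchical construction is therefore solving a problem the direct definition avoids, while leaving unverified whether its layers patch consistently.

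Second, and more seriously, you explicitly concede the genuine gap: you do not show that $\rho_{A,C}$ is distance-decreasing, and you acknowledge that you do not know how to control ``the interaction between the top-level fold and the fibrewise retraction.'' The paper's argument for this step is quite specific and is not a straightforward lexicographic descent. It first shows that the geodesic $[x,y]$ inside a common apartment $A'$ can be covered by finitely many apartments of $\mca{A}_C$, via a recursion on sector levels (from $(n-1)$-level sectors down to chambers); it then reduces to $x,y$ lying in adjacent $m$-level sectors $\mf{C},\mf{D}$, and in the key Case~1 it passes to the hyper-affine enveloping spaces $E_{A_i}={}^{n*}V$, lifts the sectors there, locates a point $z$ on the enveloping geodesic $[x,y]_E$ lying in $\overline{\tilde{\mf{C}}\cap\tilde{\mf{D}}}$, and applies the triangle inequality in $E_A$. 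The enveloping-space device is what makes the comparison of distances rigorous in the non-convex, non-gallery-connected setting; nothing in your proposal plays that role. Case~2 then handles sectors not contained in $A'$ by further decomposition and a level induction. Your sketch neither reproduces nor replaces this machinery, so the central claim is unproven.

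Finally, a small slip in your triangle-inequality chain: with $A$ chosen to contain $y,z$ and $\rho=\rho_{A,C}$, the distance-decreasing property gives $d_A(\rho(x),z)=d_A(\rho(x),\rho(z))\le d_X(x,z)$, which is the \emph{opposite} of your first inequality $d_X(x,z)\le d_A(\rho(x),z)$. The fix is to choose $A$ containing the pair whose distance you are bounding from above (say $x$ and $z$), so that $d_X(x,z)=d_A(\rho(x),\rho(z))$ and you can insert $\rho(y)$ by the triangle inequality in $A$, exactly as in the paper's corollary.
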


Once the metric is established, we investigate the curvature properties of $X$. Although $X$ is not a CAT(0) space in the classical sense (as the metric is not real-valued), we formulate a generalized CAT(0)-inequality in terms of ${}^{n*}\mathbb{R}$. This allows us to generalize the celebrated Bruhat-Tits fixed point theorem.

\begin{thm}[Main Theorem B]
Let $X$ be an $n$-level Babel building. Then $X$ satisfies the ${}^{n*}\mathbb{R}$-valued CAT(0)-inequality. Consequently, if a group $G$ acts isometrically on $X$ and stabilizes a bounded subset $B$ that admits a circumcenter, then $G$ fixes the circumcenter of $B$.
\end{thm}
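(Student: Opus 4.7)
The plan is to follow the classical Bruhat-Tits blueprint, using the distance-decreasing retractions from Main Theorem A to transfer the ${}^{n*}\mathbb{R}$-valued CAT(0) inequality from a single Babel apartment to the whole building, and then to extract the fixed-point statement from uniqueness of the circumcenter.

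For the first assertion I would proceed in three steps. First, given three points $x,y,z\in X$, use the apartment axiom for Babel buildings to place the geodesic $[y,z]$ and its midpoint $m$ inside a single Babel apartment $A$, and choose a retraction $\rho\colon X\to A$ based at a chamber of $A$ containing $m$; by construction $d_X(x,m)=d_A(\rho(x),m)$, while Main Theorem A yields $d_A(\rho(x),y)\le d_X(x,y)$ and $d_A(\rho(x),z)\le d_X(x,z)$. Second, verify the midpoint identity $\|\bar x-m\|^2 = \tfrac12\|\bar x-y\|^2 + \tfrac12\|\bar x-z\|^2 - \tfrac14\|y-z\|^2$ inside $A$ for $\bar x=\rho(x)$: since $A$ is modeled on the hyper-Euclidean space ${}^{n*}\mathbb{R}^d$, this is just the parallelogram identity for the ${}^{n*}\mathbb{R}$-valued inner product, a purely algebraic polarization that does not rely on Archimedeanness. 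Third, combine the two to obtain the desired ${}^{n*}\mathbb{R}$-valued CAT(0) inequality $d_X(x,m)^2 \le \tfrac12 d_X(x,y)^2 + \tfrac12 d_X(x,z)^2 - \tfrac14 d_X(y,z)^2$, using also that $d_X(y,z)=d_A(y,z)$ since $y,z\in A$.

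For the fixed-point statement I would first establish uniqueness of the circumcenter and then invoke equivariance. Suppose two points $p_1,p_2$ both realize the circumradius $r$ of $B$. For every $b\in B$, the CAT(0) inequality at the midpoint $m$ of $[p_1,p_2]$ yields $d(m,b)^2 \le \tfrac12 d(p_1,b)^2 + \tfrac12 d(p_2,b)^2 - \tfrac14 d(p_1,p_2)^2 \le r^2 - \tfrac14 d(p_1,p_2)^2$. Taking the supremum over $b\in B$ produces a circumradius at $m$ strictly smaller than $r$ unless $p_1=p_2$, contradicting the minimality of $r$. Hence the circumcenter is unique, and since $G$ acts by isometries and preserves $B$ setwise, it preserves the function $p\mapsto \sup_{b\in B} d(p,b)^2$ and therefore fixes its unique minimizer.

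The principal obstacle I anticipate is the strict-inequality step behind uniqueness: over $\mathbb{R}$ one freely uses that $\tfrac14 d(p_1,p_2)^2$ is a positive real whenever $p_1\ne p_2$, but over ${}^{n*}\mathbb{R}$ one must confirm that a nonzero hyper-real square is strictly positive in the lexicographic order and that subtraction preserves strict inequality. Both facts follow from compatibility of the lex order with the ring operations, but they are the only places where the hyper-real structure enters non-formally and should be recorded explicitly. A secondary subtlety is the apartment-selection step itself, which is less automatic here than in the classical theory because Babel buildings are not gallery-connected for $n>1$; the retraction framework of Main Theorem A is designed precisely to bypass this difficulty, so I would apply it as a black box rather than recapitulate it.
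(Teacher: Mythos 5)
Your strategy is essentially the paper's: retract onto a Babel apartment using the distance-decreasing property of Main Theorem~A, apply the hyper-Euclidean comparison identity of Proposition~\ref{HypermetricofBA}(iii), and then deduce uniqueness of the circumcenter by the standard CAT(0) argument. The retraction bookkeeping and the transfer of inequalities are correct, up to an inessential relabeling of which vertex plays the apex.

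There is, however, a genuine gap, and it is not the one you flag as the ``principal obstacle.'' You propose to put ``the geodesic $[y,z]$ and its midpoint $m$'' into a Babel apartment, and in the uniqueness step you apply the CAT(0) inequality at the midpoint of two circumcenters $p_1,p_2$. But for $n\geq 2$ a Babel apartment, and hence the building, is \emph{not convex}: for $y,z$ in a common apartment $A$, the point $\tfrac12(y+z)$ computed in the hyper-affine enveloping space $E_A$ need not lie in $A$, or in $X$ at all, so $d_X(m,\cdot)$ may simply be undefined. The paper is careful about precisely this: the Corollary asserts the CAT(0) inequality only for those $t\in{}^{n*}[0,1]$ with $p_t=(1-t)x+ty\in X$, and the uniqueness lemma explicitly begins ``assume there exists $t\in{}^{n*}(0,1)$ such that $p_t\in X$'' before deriving $t(1-t)d_X^2(x,y)\leq 0$. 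Your proposal silently takes $t=\tfrac12$ and assumes the midpoint exists; the argument must either be phrased conditionally as the paper does, or must produce a suitable $t$ (for instance a small enough, possibly infinitesimal, $t$ keeping $p_t$ inside a chamber or sector at $p_1$) before comparing circumradii. You do mention non-convexity via gallery-connectedness, but you attribute the difficulty to the retraction construction, which Main Theorem~A has already absorbed; the point where it actually bites in this argument is the existence of interior points of segments. The obstacle you do name -- that nonzero squares are strictly positive in ${}^{n*}\mathbb{R}$ and that subtraction respects strict inequalities -- is immediate from the ordered-field structure and requires no work.
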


Finally, we apply this geometric framework to the study of group actions. Let $\widehat{G}$ be a group acting strongly transitively on $X$. We show that the geometric structure of $X$ naturally induces decompositions of $\widehat{G}$ that generalize the classical matrix decompositions.

\begin{thm}[Main Theorem C]
Let $\widehat{G}$ be a group with a strongly transitive, type-preserving action on an $n$-level Babel building. Then $\widehat{G}$ admits the following decompositions:
\begin{itemize}
    \item The \textbf{Bruhat decomposition}: $\widehat{G} = \coprod_{w \in \widehat{W}} \widehat{B} w \widehat{B}$.
    \item The \textbf{Cartan decomposition}: $\widehat{G} = \coprod_{v \in \widehat{V}_{\mathbf{D}}} K v K$.
    \item The \textbf{$(i,j)$-Kapranov decomposition}: A family of decompositions relating sectors of different levels, which includes the Iwasawa decomposition as the $(0,n)$-case.
\end{itemize}
\end{thm}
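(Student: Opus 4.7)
The plan is to exploit the strong, type-preserving transitivity of $\widehat{G}$ to extract a Tits system $(\widehat{G},\widehat{B},N,S)$, and then to prove each of the three decompositions as a transitivity statement on an appropriate class of geometric objects: pairs of chambers for Bruhat, orbits of special vertices for Cartan, and pairs of nested sector germs at two different levels for Kapranov. In every case the hyper-real invariants supplied by the retraction of Main Theorem~A give the indexing set of double cosets, while induction on the nesting level $n$ reduces each argument to the classical affine case at level one.

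For the Bruhat decomposition, I would fix a Babel apartment $A$ with fundamental chamber $C$ and set $\widehat{B} = \mathrm{Stab}_{\widehat{G}}(C)$, $N = \mathrm{Stab}_{\widehat{G}}(A)$, so that $\widehat{W} = N/(N \cap \widehat{B})$. Strong transitivity on (apartment, chamber) pairs, combined with the existence in $N$ of reflections along every panel of $A$, yields the BN-pair axioms; the exchange condition $s\widehat{B}w \subset \widehat{B}sw\widehat{B} \cup \widehat{B}w\widehat{B}$ transfers from classical affine buildings to Babel buildings because each level-$k$ residue is itself a $(k{-}1)$-level Babel building to which the inductive hypothesis applies. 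The standard Tits-system machinery then yields both surjectivity and disjointness of $\widehat{G} = \coprod_{w \in \widehat{W}} \widehat{B} w \widehat{B}$.

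For the Cartan decomposition, fix a special vertex $x_0$ at the top level and set $K = \mathrm{Stab}_{\widehat{G}}(x_0)$. Strong transitivity implies every vertex of $X$ lies in some Babel apartment through $x_0$, and inside such an apartment the stabilizer $K \cap N$ surjects onto the finite Weyl group fixing $x_0$, so that $K$-orbits of vertices are parametrized by points of the dominant cone $\widehat{V}_{\mathbf{D}}$. Given $g \in \widehat{G}$, the retraction $\rho_{A,x_0}$ from Main Theorem~A sends $g \cdot x_0$ to a unique dominant representative $v \cdot x_0$ with $v \in \widehat{V}_{\mathbf{D}}$; the factorization $g = k_1 v k_2$ follows by choosing $k_2 \in K$ to move $A$ onto an apartment through $g \cdot x_0$ and $k_1 \in K$ to realize the retraction, while the disjointness is a direct consequence of the distance-decreasing property of $\rho_{A,x_0}$.

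For the $(i,j)$-Kapranov decomposition, I would consider nested pairs $(\mathfrak{s}_i, \mathfrak{s}_j)$ of sector germs at levels $i$ and $j$ based at $x_0$, and let $U_{i,j}$ be the pointwise stabilizer of the asymptotic class of such a pair, so that $U_{0,n}$ recovers the classical unipotent radical and the case $(i,j)=(0,n)$ is the Iwasawa decomposition. The decomposition would then be produced by factoring any $g$ as $g = k \cdot v \cdot u$, where $k \in K$ realigns $g \cdot x_0$ with $x_0$, $u \in U_{i,j}$ straightens $g \cdot \mathfrak{s}_j$ in the prescribed direction, and $v$ records the residual inter-level translation living in a quotient of $\widehat{V}_{\mathbf{D}}$. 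The main obstacle lies precisely here: one must control two retractions at different levels simultaneously, one refining the other, and verify that $U_{i,j}$ acts simply transitively on the fibers of the inter-level projection. I expect this to proceed by descending induction on $j - i$, combining the nesting axioms of Babel buildings with the generalized CAT(0)-inequality of Main Theorem~B to guarantee that the required translations and alignments exist and are unique.
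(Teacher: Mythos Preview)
Your strategy has a fundamental obstruction: for $n \ge 2$ the group $\widehat{W} \cong W_n(\Phi) = W(\Phi) \ltimes \bb{Z}^{n}(\Phi^{\vee})$ is \emph{not} a Coxeter group, and the triple $(\widehat{G}, \widehat{B}, N)$ is \emph{not} a Tits system. The paper says this explicitly after defining $W_n(\Phi)$, and the $SL_2$ example over a $2$-dimensional local field shows a product of two Bruhat cells splitting into infinitely many cells. There is no set $S$ of simple reflections with the exchange property, minimal galleries between arbitrary chambers do not exist (the building is not gallery-connected for $n>1$), and the inductive transfer of the exchange condition you propose has no base to anchor it: the level-$1$ residues are classical affine buildings, but passing back up to level $n$ re-introduces exactly the non-Coxeter translations $\omega_2,\dots,\omega_n$ that destroy the BN-pair axioms. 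So your plan for Bruhat, and hence for the other two, cannot go through as written.

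The paper bypasses BN-pair combinatorics entirely. Surjectivity in each case is a short transitivity argument: given $g$, axiom (1) of Babel buildings produces an apartment containing both the relevant object (chamber, or sub-sector of level $i$) and its $g$-translate; then the transitivity of the appropriate fixer on apartments straightens one side into the reference apartment $A$, where $\widehat{N}$ acts transitively on chambers or sectors. Disjointness is handled uniformly by one key lemma (the analogue of Bruhat--Tits~4.2.1): for subgroups $Q, Q'$ satisfying $\widehat{H}P_{\Omega} \subset Q \subset \widehat{P}_{\Omega}^{\dag}$ and similarly for $Q'$, the natural map $Q\backslash Q\widehat{N}Q' /Q' \to \widehat{W}_{Q}\backslash \widehat{W}/\widehat{W}_{Q'}$ is a bijection. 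Its proof is a direct fixer argument using only the existence of an isometry in $G$ identifying two apartments and fixing their intersection; no length function, no exchange condition, no retraction-based invariants are needed.

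Your description of the Kapranov decomposition also does not match the paper's. It is not an Iwasawa-type factorization $g = k\,v\,u$ with a single $U_{i,j}$, but a double coset decomposition
\[
\widehat{G} = \coprod_{w \in \widehat{W}} \widehat{\mf{B}}^{0}_{\mathbf{D}_{i}}\, w\, \widehat{\mf{B}}^{0}_{\mathbf{D}_{j}},
\]
where $\widehat{\mf{B}}^{0}_{\mathbf{D}_{i}} = \bigcup_{\mf{C}\in E_{\mathbf{D}_i}} \widehat{P}_{\mf{C}}$ is the union of fixers of $i$-level sectors with direction $\mathbf{D}_i$. The $(0,n)$-case reduces to Iwasawa only because $\widehat{\mf{B}}^{0}_{\mathbf{D}_0}=\widehat{B}$; there is no inter-level $U_{i,j}$, no simultaneous control of two retractions, and Main Theorem~B is not invoked at all.
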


This paper is organized as follows. Section 1 is devoted to the geometry of Babel buildings, where we construct the metric and prove the metric-decreasing property of retractions and we discuss the fixed point theorem and the group decompositions. In Section 2, we discuss the relationship between our theory and the homological properties of arithmetic quotients.

\section{The structure of Babel buildings}

\subsection{Hyper-extensions of sets}

\begin{dfn}
A subset $\mca{F}$ of the power set of $\bb{N}$ is called a \textit{filter} if it satisfies the following conditions:
\begin{enumerate}[(i)]
\item $\emptyset \notin \mca{F}$;
\item If $A,B\in \mca{F},$ then $A\cap B\in \mca{F}$;
\item If $A\in \mca{F}$ and $A\subset B,$ then $B\in \mca{F}$.
\end{enumerate}
\end{dfn}

For example, the set $\{A\subset \bb{N} \mid \mathrm{Card}(A^{c})<\infty\}$ forms a filter, known as the \textit{cofinite filter}.

\begin{dfn} 
A filter $\mca{F}$ is called an \textit{ultrafilter} if it satisfies the condition:
$$
A\in \mca{F} \iff A^{c}\notin \mca{F}.
$$
It is a well-known fact that any filter is contained in some ultrafilter.
\end{dfn}

Throughout this paper, we fix an ultrafilter $\mca{F}$ containing the cofinite filter.

\begin{dfn}
\begin{enumerate}[(a)]
\item Let $A$ be a set. Two sequences $\{a_{m}\}_{m\in \bb{N}}$ and $\{b_{m}\}_{m\in \bb{N}}$ in $A$ are said to be $\mca{F}$-equivalent if the set $\{m\in \bb{N} \mid a_{m}=b_{m}\}$ belongs to $\mca{F}$. This defines an equivalence relation, denoted by $\{a_{m}\}_{m\in \bb{N}}\sim_{\mca{F}} \{b_{m}\}_{m\in \bb{N}}$. We call the quotient set
$$
{}^{*}A:=A^{\bb{N}}/\sim_{\mca{F}}
$$
the \textit{hyper-extension} of $A$ and denote the equivalence class of $\{a_{m}\}_{m\in \bb{N}}$ by $[a_{m}]$.
\item Let $f\colon A\longrightarrow B$ be a map. Then there is an induced map between the hyper-extensions:
$$
{}^{*}f\colon {}^{*}A\longrightarrow {}^{*}B; \quad [a_{m}]\longmapsto [f(a_{m})].
$$
We call this map the \textit{hyper-extension} of $f$.
\end{enumerate}
\end{dfn}

\begin{rem}
For any set $A$, we regard $A$ as a subset of its hyper-extension ${}^{*}A$ via the diagonal embedding $a\longmapsto [a,a,\ldots]$. For any map $f\colon A\longrightarrow B$, it is clear that the restriction ${}^{*}f\vert_{A}$ coincides with $f$.
\end{rem}

We have defined the hyper-extension functor ${}^{*}\colon \mathrm{Set}\longrightarrow \mathrm{Set}$. Let us denote by ${}^{n*}$ the $(n-1)$-fold composition of ${}^{*}$. Thus, we have ${}^{1*}=\mathrm{Id}$.

\begin{lem}
\begin{enumerate}[(i)]
\item Let $(A,<)$ be a totally ordered set. For any two elements $[a_{m}],[b_{m}]\in {}^{*}A$, we define $[a_{m}]<[b_{m}]$ by the condition that the set $\{m\in \bb{N}\mid a_{m}<b_{m}\}$ belongs to $\mca{F}$. Then $({}^{*}A,<)$ is a totally ordered set containing $A$ as an ordered subset.
\item Let $(A,+,\times)$ be a ring (resp. field). For any two elements $[a_{m}],[b_{m}]\in {}^{*}A$, we define
\begin{align*}
[a_{m}]+[b_{m}]&=[a_{m}+b_{m}], \\
[a_{m}]\times [b_{m}]&=[a_{m}b_{m}].
\end{align*}
Then $({}^{*}A,+,\times)$ is a ring (resp. field) and $A$ is a subring (resp. subfield) of ${}^{*}A$.
\item If $A$ is a ring and $M$ is an $A$-module, then ${}^{*}M$ is an ${}^{*}A$-module.
\end{enumerate}
\end{lem}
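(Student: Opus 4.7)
The common thread across all three parts is that each structural relation (order, sum, product) descends from $\bb{N}$-indexed sequences to $\mca{F}$-equivalence classes: the filter axioms (closure under finite intersection and under supersets) supply well-definedness, the ultrafilter property (every subset or its complement lies in $\mca{F}$) supplies totality and field inverses, and each algebraic axiom is then inherited pointwise from $A$ on the ``full'' set $\bb{N}\in\mca{F}$. Nothing here is deep; the task is to organize the filter manipulations cleanly.

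For (i), I would first check that $[a_{m}]<[b_{m}]$ is independent of representatives: if $[a_{m}]=[a_{m}']$ and $[b_{m}]=[b_{m}']$, the sets $\{m\mid a_{m}=a_{m}'\}$ and $\{m\mid b_{m}=b_{m}'\}$ both lie in $\mca{F}$, so their intersection does, and on that intersection $a_{m}<b_{m}\iff a_{m}'<b_{m}'$; closing under supersets completes the argument. Irreflexivity follows from $\emptyset\notin\mca{F}$, and transitivity from intersecting the two defining sets. For totality I would exploit the trichotomy partition $\{a_{m}<b_{m}\}\sqcup\{a_{m}=b_{m}\}\sqcup\{a_{m}>b_{m}\}=\bb{N}$: by the ultrafilter axiom exactly one of these three sets lies in $\mca{F}$ (two would intersect to $\emptyset$, and none would make all three complements lie in $\mca{F}$, again forcing $\emptyset\in\mca{F}$). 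The diagonal embedding preserves $<$ tautologically, since $a<b$ in $A$ makes the defining set all of $\bb{N}$.

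For (ii), well-definedness of $+$ and $\times$ follows from the same intersection trick: on $\{m\mid a_{m}=a_{m}'\}\cap\{m\mid b_{m}=b_{m}'\}\in\mca{F}$, the componentwise sum and product of the two choices of representatives coincide. Each ring axiom (associativity, commutativity, distributivity, identities) holds for every $m\in\bb{N}$ and therefore on $\bb{N}\in\mca{F}$, so lifts to ${}^{*}A$; the diagonal embedding is visibly a ring homomorphism. In the field case, suppose $[a_{m}]\neq[0]$; then $\{m\mid a_{m}=0\}\notin\mca{F}$, so by the ultrafilter property $\{m\mid a_{m}\neq 0\}\in\mca{F}$. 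Define $b_{m}=a_{m}^{-1}$ on that set and $b_{m}=0$ elsewhere; then $[a_{m}b_{m}]=[1]$ since the two sequences agree on a set in $\mca{F}$.

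Part (iii) is strictly parallel: addition on ${}^{*}M$ and the scaling $[a_{m}]\cdot[x_{m}]=[a_{m}x_{m}]$ for $[a_{m}]\in{}^{*}A$, $[x_{m}]\in{}^{*}M$, are well-defined by the same intersection argument, and each $A$-module axiom lifts from $M$ componentwise on $\bb{N}\in\mca{F}$. The only step that genuinely uses more than the filter axioms is the field-inverse construction in (ii), where passing from ``not all representatives vanish'' to ``almost all representatives are nonzero'' requires the ultrafilter property rather than mere filter properties; this is the only place where I expect real care to be needed, everything else being routine verification of ultraproduct algebra.
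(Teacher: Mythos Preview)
Your proposal is correct and follows essentially the same approach as the paper. The paper's proof is terser: for (i) it records only the totality argument via the trichotomy partition $\bb{N}=J_{<}\sqcup J_{=}\sqcup J_{>}$ and the ultrafilter dichotomy, for (ii) it passes over the ring axioms as clear and gives the same inverse construction you describe, and for (iii) it omits the proof entirely; your version simply fills in the well-definedness, irreflexivity, and transitivity checks that the paper leaves implicit.
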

\begin{proof}
\begin{enumerate}[(i)]
\item For any $[a_{m}],[b_{m}]\in {}^{*}A$, let us set
\begin{align*}
J_{=}&=\{m\in \bb{N}\mid a_{m}=b_{m}\}, \\
J_{<}&=\{m\in \bb{N}\mid a_{m}<b_{m}\}, \\
J_{>}&=\{m\in \bb{N}\mid a_{m}>b_{m}\}.
\end{align*}
Since $A$ is a totally ordered set, the decomposition $\bb{N}=J_{<}\sqcup J_{=}\sqcup J_{>}$ holds. If $[a_{m}]\not< [b_{m}]$, then $J_{<}\notin \mca{F}$. Since $\mca{F}$ is an ultrafilter, this implies that $J_{=}\sqcup J_{>}$ belongs to $\mca{F}$. Thus we have $[a_{m}]\geq [b_{m}]$, which proves that ${}^{*}A$ is a totally ordered set.
\item It is clear that ${}^{*}A$ is a ring and $A$ is a subring of ${}^{*}A$ under the operations defined above. Suppose that $A$ is a field. If $[a_{m}]\in {}^{*}A$ is non-zero, then the set $J=\{m\in \bb{N}\mid a_{m}\neq 0\}$ belongs to $\mca{F}$. Let us define
$$
b_{m} = \left\{
\begin{array}{ll}
\frac{1}{a_{m}} & (m \in J)\\
0 & (m \notin J).
\end{array}
\right.
$$
Then the set $\{m\in \bb{N}\mid a_{m}b_{m}=1\}$ contains $J$ and thus lies in $\mca{F}$. Hence we have $[a_{m}]^{-1}=[b_{m}]$ in ${}^{*}A$.

\item The proof is omitted.
\end{enumerate}
\end{proof}

Since $\mca{F}$ contains the cofinite filter, there exists an element $\omega\in {}^{*}\bb{R}$ such that $\omega>r$ for all $r\in \bb{R}$. Such an element is called \textit{infinitely large}. Similarly, we say that an element $\omega_{i}\in {}^{i*}\bb{R}$ is \textit{$i$-level infinitely large} if $\omega_{i}>r$ for all $r\in {}^{(i-1)*}\bb{R}$.

Throughout this paper, we fix infinitely large elements $\omega_{i}\in {}^{i*}\bb{Z}$ of level $i$ for each $i=2,\ldots,n$, and set $\omega_{1}=1$. This yields a natural isomorphism
$$
\bb{Z}^{n}\simeq \bigoplus_{i=1}^{n}\bb{Z}\omega_{i}
$$
of totally ordered Abelian groups, where $\bb{Z}^{n}$ is equipped with the lexicographic order. Via this isomorphism, we regard $\bb{Z}^{n}$ as a subgroup of the totally ordered field ${}^{n*}\bb{R}$.

\begin{rem}
The lexicographically ordered group $\bb{R}^{n}\simeq \bigoplus_{i=1}^{n}\bb{R}\omega_{i}$ is contained in the hyper-line ${}^{n*}\bb{R}$. The vectors $\omega_{1},\ldots,\omega_{n}$ share the same direction (sign) but are linearly independent over $\bb{R}$. We call such vectors \textit{magnitude-independent}. Later, we will consider apartments consisting of valuations of a root datum for $SL_{d}(F)$, where $F$ is an $n$-dimensional local field. This space possesses $d-1=\mathrm{rank}(A_{d-1})$ independent directions, and in each direction, there are $n=\dim F$ magnitude-independent vectors.
\end{rem}

\begin{rem}
Let $A=\{a_{1},\ldots,a_{m}\}$ be a finite set. Suppose that there exists an element $x=[x_{j}]_{j\in \bb{N}}\in {}^{*}A\setminus A$. Then the sets
\begin{align*}
N_{1}&=\{j\in \bb{N}\mid x_{j}=a_{1}\}, \\
&\vdots \\
N_{m}&=\{j\in \bb{N}\mid x_{j}=a_{m}\}
\end{align*}
form a partition $\bb{N}=N_{1}\sqcup \cdots \sqcup N_{m}$. Since $x\notin A$, we have $N_{1},\ldots,N_{m}\notin \mca{F}$. However, this contradicts the fact that $\mca{F}$ is an ultrafilter (which must contain exactly one element of any finite partition). Therefore, if $A$ is a finite set, then ${}^{*}A=A$.
\end{rem}

\begin{rem}\label{Com1}
We describe a similarity between hyperreal fields and higher dimensional local fields. Let us define
$$
\mathscr{O}^{(1)}_{{}^{n*}\bb{R}}=\{x\in {}^{n*}\bb{R}\mid \exists r\in {}^{(n-1)*}\bb{R}\text{ such that }{}^{n*}|x|\leq r\}.
$$
We denote by $\mf{m}_{n}\subset \mathscr{O}^{(1)}_{{}^{n*}\bb{R}}$ the set of $n$-level infinitesimals. Then we have an isomorphism
$$
\mathscr{O}^{(1)}_{{}^{n*}\bb{R}}/\mf{m}_{n}\simeq {}^{(n-1)*}\bb{R},
$$
and ${}^{n*}\bb{R}$ is the field of fractions of $\mathscr{O}^{(1)}_{{}^{n*}\bb{R}}$. 
Let $F$ be an $n$-dimensional local field and $\mathscr{O}_{F}^{(1)}$ the discrete valuation ring of $F$. The residue field of $\mathscr{O}^{(1)}_{F}$ is an $(n-1)$-dimensional local field, and the field of fractions of $\mathscr{O}_{F}^{(1)}$ is $F$. In view of this analogy, the field ${}^{n*}\bb{R}$ may be regarded as an ``$n$-dimensional Archimedean local field".
\end{rem}

\subsection{Babel apartments}

\begin{dfn}
\begin{enumerate}[(i)]
\item A pair $(X,d_{X})$ is called an \textit{${}^{n*}\bb{R}$-metric space} if the map
$$
d_{X}\colon X\times X\longrightarrow {}^{n*}\bb{R}_{\geq 0}
$$ 
satisfies the following conditions:
\begin{enumerate}[(a)]
\item $d_{X}(x,y)=0 \iff x=y$ for all $x,y\in X$;
\item $d_{X}(x,y)=d_{X}(y,x)$ for all $x,y\in X$;
\item $d_{X}(x,z)\leq d_{X}(x,y)+d_{X}(y,z)$ for all $x,y,z\in X$.
\end{enumerate}
In particular, if $n\geq 2$, such a space is called a \textit{hyper-metric space}.
\item Let $(X,d_{X})$ and $(Y,d_{Y})$ be ${}^{n*}\bb{R}$-metric spaces. A map $\psi\colon X\longrightarrow Y$ is called \textit{isometric} if
$$
d_{X}(x,y)=d_{Y}(\psi(x),\psi(y))
$$
holds for all $x,y\in X$.  
\end{enumerate}
\end{dfn}

\begin{exa}
For example, the $n$-dimensional local field $F=\bb{F}_{q}((t_{1}))\ldots ((t_{n}))$ carries an ${}^{n*}\bb{R}$-metric defined by
$$
d(x,y) = \left( \frac{1}{q}\right)^{v_{F}(x-y)}.
$$
Here, $v_{F}$ denotes the rank-$n$ valuation on $F$ associated with the system of local parameters $t_{1},\ldots,t_{n}$.
\end{exa}

Let $V$ be a finite-dimensional real vector space equipped with a positive definite inner product
$$
(\cdot,\cdot)\colon V\times V\longrightarrow \bb{R}.
$$
Let $\Phi$ be a root system in $V$. Since $\Phi$ is finite, we have ${}^{n*}V={}^{n*}\bb{R}\otimes_{\bb{R}} V$ (or simply ${}^{n*}V \simeq ({}^{n*}\bb{R})^{\dim V}$). Let us consider the hyper-extension of the inner product:
$$
{}^{n*}(\cdot,\cdot)\colon {}^{n*}V\times {}^{n*}V\longrightarrow {}^{n*}\bb{R}.
$$
This form remains a positive definite inner product over the field ${}^{n*}\bb{R}$.

\begin{dfn}
For any $a\in \Phi$ and $k\in \bb{Z}^{n}$, we define a \textit{hyper-affine reflection} $s_{a,k}$ by
$$
s_{a,k}\colon {}^{n*}V\longrightarrow {}^{n*}V; \quad v\longmapsto v-2\frac{({}^{n*}(a,v)-k)}{(a,a)}a.
$$
The group $W_{n}(\Phi)$ generated by $\{s_{a,k} \mid a\in \Phi, k\in \bb{Z}^{n}\}$ is called the \textit{$n$-level Weyl group of type $\Phi$}.
\end{dfn}

Let $\Phi^{\vee}$ denote the set $\{\frac{2a}{(a,a)} \mid a\in \Phi\}$. Then it is clear that
$$
W_{n}(\Phi)=W(\Phi)\ltimes \bb{Z}^{n}(\Phi^{\vee}).
$$
Note that when $n>1$, the Weyl group $W_{n}(\Phi)$ is a non-Coxeter reflection group.

The following propositions are hyper-analogs of well-known classical results. We therefore omit the proofs.

\begin{prop}\label{HypermetricofBA}
\begin{enumerate}[(i)]
\item For any $x,y\in {}^{n*}V$, we define
$$
{}^{n*}d_{V}(x,y)={}^{n*}(x-y,x-y)^{\frac{1}{2}}.
$$
Then $({}^{n*}V,{}^{n*}d_{V})$ is an ${}^{n*}\bb{R}$-metric space, and the action of $W_{n}(\Phi)$ on this space is isometric.
\item Let $x_{0},\ldots,x_{d}$ be affinely independent points in ${}^{n*}V$. If points $x,y\in {}^{n*}V$ satisfy
$$
{}^{n*}d_{V}(x_{i},x)={}^{n*}d_{V}(x_{i},y)
$$
for all $i=0,\ldots,d$, then $x=y$.
\item For any $t\in {}^{n*}[0,1]$ and any $x,y,z\in {}^{n*}V$, let $p_{t}=(1-t)x+ty$. Then the following equality holds:
$$
{}^{n*}d_{V}^{2}(p_{t},z)=(1-t){}^{n*}d_{V}^{2}(x,z)+t{}^{n*}d_{V}^{2}(y,z)-t(1-t){}^{n*}d_{V}^{2}(x,y).
$$
\end{enumerate}
\end{prop}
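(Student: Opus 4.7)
The three statements are hyper-analogs of standard Euclidean facts, and the plan is to argue that each is either a first-order assertion over $V$ that transfers through the ${}^{n*}$ functor, or a polynomial identity in the inner product that holds formally over any field extension of $\bb{R}$. In either case the classical proof goes through verbatim. Inductively, since ${}^{n*}\bb{R}$ is built by iterated application of ${}^{*}$, and each application preserves field operations, order, and positive definiteness --- either by the transfer principle, or equivalently by a componentwise argument on representatives modulo $\mca{F}$ --- any identity or inequality expressible in this language lifts from level $n-1$ to level $n$.

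For (i), positive definiteness of ${}^{n*}(\cdot,\cdot)$ and the equivalence ${}^{n*}(v,v) = 0 \Leftrightarrow v = 0$ both transfer from $V$. Nonnegative square roots in ${}^{n*}\bb{R}$ exist by taking componentwise nonnegative square roots of representative sequences, so ${}^{n*}d_{V}$ is well-defined. The triangle inequality follows in the standard way from the hyper Cauchy--Schwarz inequality ${}^{n*}(x,y)^{2} \leq {}^{n*}(x,x) \cdot {}^{n*}(y,y)$, which is itself a transfer of the real version (or derived by expanding ${}^{n*}(v - tw, v - tw) \geq 0$ and optimizing over $t \in {}^{n*}\bb{R}$). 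Isometry of $s_{a,k}$ reduces to checking that its associated linear reflection preserves $(\cdot,\cdot)$, a polynomial identity in the reflection formula requiring only $(a,a) \neq 0$.

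For (ii), I would expand
$$
{}^{n*}d_{V}^{2}(x_{i},x) - {}^{n*}d_{V}^{2}(x_{i},y) = {}^{n*}(x,x) - {}^{n*}(y,y) - 2 \cdot {}^{n*}(x-y,\, x_{i}).
$$
Setting this to zero for every $i$ and subtracting the equation at $i = 0$ yields ${}^{n*}(x-y,\, x_{i}-x_{0}) = 0$ for each $i \geq 1$. Affine independence makes $\{x_{i}-x_{0}\}_{i=1}^{d}$ linearly independent in ${}^{n*}V$; assuming $d = \dim V$ (as is needed for the conclusion), this is a basis, and positive definiteness forces $x - y = 0$. Part (iii) is a direct algebraic manipulation: writing $p_{t} - z = (1-t)(x-z) + t(y-z)$ and expanding ${}^{n*}d_{V}^{2}(p_{t},z)$ gives $(1-t)^{2} \cdot {}^{n*}d_{V}^{2}(x,z) + 2t(1-t) \cdot {}^{n*}(x-z,\, y-z) + t^{2} \cdot {}^{n*}d_{V}^{2}(y,z)$, and the polarization identity $2 \cdot {}^{n*}(x-z,\, y-z) = {}^{n*}d_{V}^{2}(x,z) + {}^{n*}d_{V}^{2}(y,z) - {}^{n*}d_{V}^{2}(x,y)$ converts this into the claimed right-hand side.

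The main obstacle is conceptual rather than computational: one must verify that analytic-looking operations (nonnegative square roots, absolute values, norms) behave as expected in ${}^{n*}\bb{R}$. Once this point is granted, the classical Euclidean arguments carry over unchanged, which is presumably the reason the author declares the proofs omitted.
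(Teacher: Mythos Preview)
Your proposal is correct and, as you yourself anticipated in the final paragraph, the paper gives no proof at all: it simply states ``The following propositions are hyper-analogs of well-known classical results. We therefore omit the proofs.'' Your write-up supplies exactly the details the author chose to suppress, and the approach---transfer of first-order properties together with direct polynomial/inner-product computations---is the natural one and is consistent with the paper's framing of the result as a routine hyper-analog.
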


\begin{dfn}\label{defofBA}
The fundamental chamber
$$
C_{0}=\{v\in V \mid 0<(a,v)<1 \text{ for all }a\in \Phi^{+}\}
$$
of $V$ is naturally regarded as a subset of ${}^{n*}V$. Let us define
$$
\Sigma(n,\Phi)_{\bb{Z}}=\bigcup_{w\in W_{n}(\Phi)}w\overline{C_{0}}.
$$
An ${}^{n*}\bb{R}$-metric space isometric to $\Sigma(n,\Phi)_{\bb{Z}}$ is called an \textit{$n$-level Babel apartment} of type $\Phi$. We often omit the subscript and simply write $\Sigma(n,\Phi)=\Sigma(n,\Phi)_{\bb{Z}}$.
\end{dfn}

\begin{rem}
Similarly, we consider the subset $\Sigma(n,\Phi)_{\bb{R}}=\bb{R}^{n}(\Phi)$ of ${}^{n*}V$. This set is stable under the natural action of $W(\Phi)\ltimes \bb{R}^{n}(\Phi^{\vee})$. A space isometric to $\Sigma(n,\Phi)_{\bb{R}}$ is also referred to as a \textit{Babel apartment}. In future work, particularly when investigating the higher-dimensional analog of \cite[Section 7]{BT}, we will study the space $\bigoplus_{i=1}^{n}\bb{R}\omega_{i}(\Phi)$.
\end{rem}

In this paper, we restrict our attention to the Babel apartments given in Definition \ref{defofBA}. Note that the metric on a Babel apartment inherits the geometric properties established in Proposition \ref{HypermetricofBA}. In particular, for any two $n$-level Babel apartments $A_{1}, A_{2}$ of the same type and any chambers $C_{1}\subset A_{1}, C_{2}\subset A_{2}$, there exists a unique $W_{n}(\Phi)$-isometry $\varphi\colon A_{1}\longrightarrow A_{2}$ such that $\varphi(C_{1})=C_{2}$.

\begin{dfn}
Let $\varphi\colon \Sigma(n,\Phi)\xrightarrow{\sim} A$ be an isometry. We define the set $E_{A}$ to be ${}^{n*}V$. The inclusion map $A\hookrightarrow E_{A}$ is defined as the composition of the inverse isometry $\varphi^{-1}\colon A \longrightarrow \Sigma(n,\Phi)$ and the natural inclusion $\Sigma(n,\Phi) \hookrightarrow {}^{n*}V$. We call that $E_{A}$ a hyper-affine enveloping space of $A$.
\end{dfn}

\begin{exa}
\begin{enumerate}
\item Let us consider a $2$-level Babel apartment of type $A_{1}$. Following Parshin, the group $W_{2}(A_{1})$ is presented as
$$
W_{2}(A_{1})=\langle s,w_{1},w_{2} \mid s^{2}=w_{1}^{2}=w_{2}^{2}=(sw_{1}w_{2})^{2}=1\rangle.
$$
The canonical action of this group on the hyper-line ${}^{n*}\bb{R}$ is defined by
$$
s(x)=-x, \quad w_{1}(x)=2\omega_{1}-x, \quad w_{2}(x)=2\omega_{2}-x.
$$
Therefore, the geometric structure of the Babel apartment $\Sigma(2,A_{1})$ is illustrated as follows:

\begin{figure}[H]
\centering
\begin{tikzpicture}
 \draw[semithick] (-1.5,0)--(1.8,0);
 \draw[dashed] (-2.3,0)--(-1.5,0);
 \draw[dashed] (2,0)--(3.1,0);
 \draw[semithick] (3.1,0)--(5,0);
 \fill[black](0,0) circle (0.06);
 \draw (0.2,0) node[above]{$0$};
 \fill[black](0.8,0) circle (0.06);
 \draw (1,0) node[above]{$1$};
 \fill[black](2.5,0) circle (0.06);
 \draw (2.7,0) node[above]{$\omega_{2}$};
 \draw[dotted] (2.5,0.5)--(2.5,-0.5);
 \draw[dotted] (0,0.5)--(0,-0.5);
 \draw[dotted] (0.8,0.5)--(0.8,-0.5);
 \draw[<->] (0.3,-0.7) to [out=-135,in=-45] (-0.3,-0.7);
 \draw (0,-0.8) node[below]{$s$};
 \draw[<->] (1.1,-0.7) to [out=-135,in=-45] (0.5,-0.7);
 \draw (0.8,-0.8) node[below]{$w_{1}$};
 \draw[<->] (2.2,-0.7) to [out=-45,in=-135] (2.8,-0.7);
 \draw (2.5,-0.8) node[below]{$w_{2}$};
\end{tikzpicture}
\caption{The Babel apartment $\Sigma(2,A_{1})$ in ${}^{2*}\bb{R}$.}
\label{fig:A1_apt}
\end{figure}
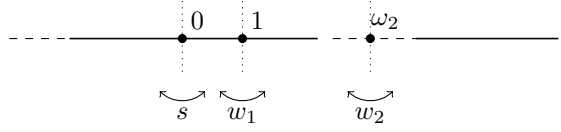

Here, each connected component is isomorphic to the affine line $\bb{R}=\bigcup_{w\in W_{a}(\Phi)}w\overline{C_{0}}$.
Thus, we have the decomposition $\Sigma(2,A_{1})=\coprod_{n\in \bb{Z}}(2n\omega_{2}+\bb{R})$ in ${}^{*}\bb{R}$.

\item Let $a=(2,0)$ and $b=(1,-\sqrt{3})$. Then we have the positive root system $A_{2}^{+}=\{a,b,a+b\}$. Similar to the previous example, the Babel apartment $\Sigma(2,A_{2})$ can be visualized as follows:

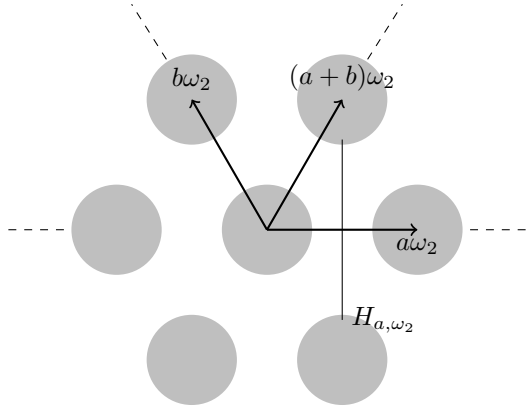
\begin{figure}[H]
\centering
\begin{tikzpicture}
 \draw[dashed] (2.7,0)--(3.5,0); 
 \draw[dashed] (1,1.732)--(1.8,3);
 \draw[dashed] (-1,1.732)--(-1.8,3);
 \draw[dashed] (-2.7,0)--(-3.5,0);
 \fill[lightgray] (0,0) circle (0.6);
 \fill[lightgray] (2,0) circle (0.6);
 \fill[lightgray] (-1,1.732) circle (0.6);
 \fill[lightgray] (1,1.732) circle (0.6);
 \fill[lightgray] (-2,0) circle (0.6);
 \fill[lightgray] (1,-1.732) circle (0.6);
 \fill[lightgray] (-1,-1.732) circle (0.6);
 \draw[->,thick] (0,0)--(2,0);
 \draw[->,thick] (0,0)--(-1,1.732); 
 \draw[->,thick] (0,0)--(1,1.732);
 \draw (2,0)node[below]{$a\omega_{2}$};
 \draw (1,1.732)node[above]{$(a+b)\omega_{2}$};
 \draw (-1,1.732)node[above]{$b\omega_{2}$};
 \draw (1,1.2)--(1,-1.2)node[right]{$H_{a,\omega_{2}}$};
\end{tikzpicture}
\caption{The Babel apartment $\Sigma(2,A_{2})$ in ${}^{2*}\bb{R}^{2}$. Each shaded region represents a copy of the standard affine plane $\bb{R}^{2}$.}
\label{fig:A2_apt}
\end{figure}

\item Let $a=(2,0)$ and $b=(-2,2)$. Then $B_{2}^{+}=\{a,b,a+b,2a+b\}$. The structure of $\Sigma(2,B_{2})$ is illustrated below:

\begin{figure}[H]
\centering
\begin{tikzpicture}
 \fill[lightgray] (0,0) circle (0.6);
 \fill[lightgray] (0,2) circle (0.6);
 \fill[lightgray] (2,0) circle (0.6);
 \fill[lightgray] (2,2) circle (0.6);
 \fill[lightgray] (0,-2) circle (0.6);
 \fill[lightgray] (-2,0) circle (0.6);
 \fill[lightgray] (-2,-2) circle (0.6);
 \fill[lightgray] (-2,2) circle (0.6);
 \fill[lightgray] (2,-2) circle (0.6);
 \draw[dashed] (-2.7,0)--(-3.5,0);
 \draw[dashed] (2.7,0)--(3.5,0);
 \draw[dashed] (-2.7,2)--(-3.5,2);
 \draw[dashed] (2.7,2)--(3.5,2);
 \draw[dashed] (-2.7,-2)--(-3.5,-2);
 \draw[dashed] (2.7,-2)--(3.5,-2);
 \draw[->,thick] (0,0)--(2,0);
 \draw[->,thick] (0,0)--(2,2);
 \draw[->,thick] (0,0)--(0,2);
 \draw[->,thick] (0,0)--(-2,2);
 \draw (2,0)node[below]{$a\omega_{2}$};
 \draw (2,2)node[above]{$(2a+b)\omega_{2}$};
 \draw (0,2)node[above]{$(a+b)\omega_{2}$};
 \draw (-2,2)node[above]{$b\omega_{2}$};
\end{tikzpicture}
\caption{The Babel apartment $\Sigma(2,B_{2})$ in ${}^{2*}\bb{R}^{2}.$}
\label{fig:B2_apt}
\end{figure}
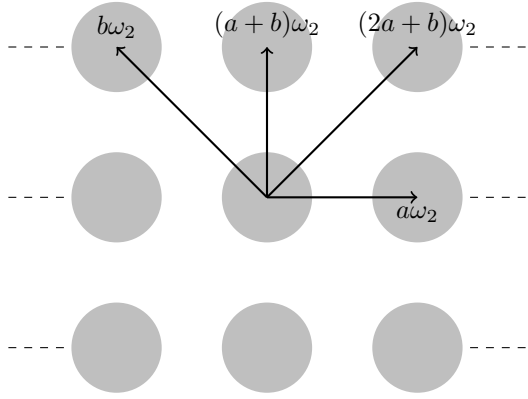
\end{enumerate}
\end{exa}

These illustrations help us to visualize the geometry of Babel apartments. Unlike the discrete case $\Sigma(n,\Phi)$, it is difficult to depict the real version $\Sigma(n,\Phi)_{\bb{R}}$ faithfully. Nevertheless, the geometric intuition remains qualitatively similar. It is important to note that for $n \ge 2$, Babel apartments are not convex. This lack of convexity constitutes a fundamental difference from the classical case of $n=1$.

\begin{rem}
We make the following observations regarding the structure of Babel apartments.
\begin{enumerate}[(i)]
\item Considered as an additive group, the Babel apartment $\Sigma(n,\Phi)$ is isomorphic to $\bb{Z}^{n-1}(\Phi)\oplus \bb{R}(\Phi)$.
\item Let $A$ be an $n$-level Babel apartment. For any $x\in A$, the subset
$$
\{y\in A \mid d_{A}(x,y)\in {}^{(n-1)*}\bb{R}\}
$$
constitutes an $(n-1)$-level Babel apartment. This property reflects the nesting structure of the Babel apartment $A$.
\end{enumerate}
\end{rem}

\begin{dfn}\label{midvecCham}
For any $i=1,\ldots,n$, consider the subgroup $W(\Phi)\ltimes \bigoplus_{j>i}\bb{Z}\omega_{j}(\Phi^{\vee})$ of $W_{n}(\Phi)$. Let $\mathbf{D}_{i}$ be a fundamental domain for this subgroup in $\Sigma(n,\Phi)$. A subset of the form $x+\mathbf{D}_{i}$ in $\Sigma(n,\Phi)$, for some $x\in \Sigma(n,\Phi)$, is called an \textit{$i$-level Babel sector}. For an $i$-level Babel sector $\mf{C}=x+\mathbf{D}_{i}$, we call $\mathbf{D}_{i}$ the \textit{direction} of $\mf{C}$ and $x$ the \textit{apex} (or \textit{cone point}) of $\mf{C}$.
\end{dfn}

In general, $i$-level Babel sectors can be defined analogously for any $n$-level Babel apartment. For convenience, we refer to a $1$-level Babel sector as an \textit{affine sector}, and a $0$-level Babel sector as a \textit{chamber}. It is evident that the non-empty intersection of two $i$-level Babel sectors with the same direction is also an $i$-level Babel sector.

\begin{rem}
Let $A$ be an $n$-level Babel apartment. Let $\mf{C}$ and $\mf{C}'$ be $i$-level Babel sectors in $A$, with apices $x$ and $x'$ respectively. If the intersection $\mf{C}\cap \mf{C}'$ is also an $i$-level Babel sector, then $d_{A}(x,x')\in {}^{i*}\bb{R}$ holds.
\end{rem}

\begin{exa}
In $\Sigma(2,A_{1})$, an affine sector is isometric to the half-line $\bb{R}_{\geq 0}$, while a $2$-level Babel sector is isometric to $\bb{R}_{\geq 0}+\coprod_{n\geq 1}(2n\omega_{2}+\bb{R})$.
\end{exa}

\begin{dfn}
Let $A$ be an $n$-level Babel apartment and $C$ a chamber of $A$. We define the retraction map
$$
\tau_{C}\colon A\longrightarrow \overline{C}
$$
by the condition that $\tau_{C}\vert_{w\overline{C}}$ acts as the inverse of $w\colon \overline{C} \to w\overline{C}$ (i.e., $\tau_{C}(x) = w^{-1}(x)$ for $x \in w\overline{C}$) for any $w\in W_{n}(\Phi)$. A $W_{n}(\Phi)$-isometry $\psi\colon A_{1}\longrightarrow A_{2}$ between Babel apartments is said to be \textit{type-preserving} if $\psi\circ \tau_{C}=\tau_{\psi(C)}\circ \psi$ holds for some chamber $C$ of $A_{1}$. The property of being type-preserving is independent of the choice of the chamber $C$.
\end{dfn}

Next, we consider a higher-dimensional generalization of the uniqueness lemma. In the classical case ($n=1$), let $\Omega$ be a subset of an affine apartment $\Sigma(1,\Phi)$ and $g$ a $W_{a}(\Phi)$-isometry on $\Sigma(1,\Phi)$. It is a known fact that if $g$ fixes $\Omega$ pointwise, then $g$ fixes the convex closure of $\Omega$.

\begin{dfn}
For any $a\in \Phi$ and $k\in \bb{Z}^{n}$, the subset $\alpha_{a,k}=\{v\in \Sigma(n,\Phi) \mid {}^{n*}(a,v)+k\geq 0\}$ is called a \textit{half-space} of $\Sigma(n,\Phi)$. For any subset $\Omega$ of $\Sigma(n,\Phi)$, we denote by $\mathrm{cl}(\Omega)$ the intersection of all half-spaces containing $\Omega$, and call $\mathrm{cl}(\Omega)$ the \textit{enclosure} of $\Omega$.
\end{dfn}

For any non-empty subset $\Omega$ of $\Sigma(n,\Phi)$, $\mathrm{cl}(\Omega)$ can also be characterized as the set of points $z\in \Sigma(n,\Phi)$ such that no wall separates $z$ from $\Omega$.

\begin{lem}\label{cl-fix}
Let $A$ be a Babel apartment and $g\colon A\longrightarrow A$ a type-preserving $W_{n}(\Phi)$-isometry. For any non-empty subset $\Omega$ of $A$, if $g$ fixes $\Omega$ pointwise, then $g$ fixes $\mathrm{cl}(\Omega)$ pointwise.
\end{lem}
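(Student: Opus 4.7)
The plan is to decompose $g$ as a product of reflections whose fixed walls all contain $\Omega$, and to combine this with the following observation: if a wall $H$ of $\Sigma(n,\Phi)$ contains $\Omega$, then $H$ automatically contains the entire enclosure $\mathrm{cl}(\Omega)$. The observation is immediate from the definition, since both closed half-spaces bounded by $H$ contain $\Omega$, forcing their intersection $H$ to contain $\mathrm{cl}(\Omega)$. Once the decomposition $g=s_1\cdots s_r$ is in hand with each reflecting hyperplane containing $\Omega$, each factor $s_i$ fixes its wall pointwise, hence fixes $\mathrm{cl}(\Omega)$ pointwise, and therefore so does $g$.

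To build the decomposition, I would write $g=(w,t)$ in the semidirect product $W_n(\Phi)=W(\Phi)\ltimes\bb{Z}^{n}(\Phi^{\vee})$ and pick any $x_0\in\Omega$. The fixed-point equation $g(x_0)=x_0$ forces $t=(1-w)x_0$, while the remaining conditions $g(\omega)=\omega$ give $\omega-x_0\in\mathrm{Fix}(w)$; in particular, $w$ pointwise fixes the direction subspace $U_0\subset V$ of $\mathrm{aff}(\Omega)$. By the classical theory of the finite Coxeter group $W(\Phi)$, $w$ then decomposes as $w=s_{a_1}\cdots s_{a_r}$ with each $a_i$ orthogonal to $U_0$. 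Setting $\sigma_i(v):=s_{a_i}(v-x_0)+x_0$, the affine reflection across the hyperplane $\{v:(a_i,v)=(a_i,x_0)\}$, the telescoping identity $\sigma_1\cdots\sigma_r(v)=w(v-x_0)+x_0=w(v)+t=g(v)$ produces the candidate decomposition; each such reflecting hyperplane contains $\Omega$, because $a_i\perp U_0\supset\Omega-x_0$ forces $(a_i,\omega)=(a_i,x_0)$ for every $\omega\in\Omega$.

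The remaining and essential difficulty is the integrality lift: I must verify that $(a_i,x_0)\in\bb{Z}^{n}$ for every $i$, so that each $\sigma_i$ is a genuine element of $W_n(\Phi)$ and its fixed hyperplane is an actual wall of $\Sigma(n,\Phi)$. My route is induction on the length of the expression $w=s_{a_1}\cdots s_{a_r}$: expanding $(1-s_{a_1}\cdots s_{a_r})x_0$ along the coroot basis and invoking the standard integrality of the Cartan pairings $(a_i,a_j^{\vee})\in\bb{Z}$, I expect to peel off $(a_r,x_0)\in\bb{Z}^{n}$ first and then work backward through the remaining factors, repeatedly using the hypothesis $t\in\bb{Z}^{n}(\Phi^{\vee})$. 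This is precisely where the non-Coxeter nature of $W_n(\Phi)$ for $n\geq 2$ intrudes, since the classical parabolic subgroup theorems that would render the integrality automatic in the affine Coxeter setting no longer apply; the lift must be performed by hand from the explicit semidirect product structure together with root-lattice arithmetic, and the type-preservingness hypothesis is what ensures that the resulting parity of $r$ is even, so that the reflections produced genuinely lie inside the relevant subgroup of $W_n(\Phi)$.
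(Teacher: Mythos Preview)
Your reflection-decomposition strategy is sound in principle and quite different from the paper's proof (which inducts on $\mathrm{rank}(\Phi)$: for $x\in\mathrm{cl}(\Omega)$ one observes $g(x)\in\mathrm{cl}(\Omega)$, and if some wall $H$ separates $x$ from $g(x)$ then $\Omega\subset H$, reducing to a lower-rank apartment). However, your integrality step has a genuine gap. The ``peel off $(a_r,x_0)\in\bb{Z}^n$ by expanding $(1-w)x_0$ in coroots'' argument fails for an arbitrary reduced expression. Take $\Phi=A_2$ with simple roots $a,b$ (normalized so $a^\vee=a$, $b^\vee=b$), let $w=w_0=s_as_bs_a$, and pick $x_0\in V$ with $(a,x_0)=(b,x_0)=\tfrac12$. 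Then $(1-w_0)x_0=(a+b,x_0)(a+b)=a+b\in\bb{Z}\Phi^\vee$, so $g=(w_0,a+b)\in W_n(\Phi)$ fixes $x_0$; but $(a,x_0)=\tfrac12\notin\bb{Z}^n$, so your lifted reflection $\sigma_1=s_{a,1/2}$ is \emph{not} an element of $W_n(\Phi)$ and its fixed hyperplane is not a wall. No inductive bookkeeping with Cartan integers rescues this: the only constraint you extract from $t\in\bb{Z}^n\Phi^\vee$ is $(a+b,x_0)\in\bb{Z}^n$. (Of course $w_0=s_{a+b}$ gives a decomposition that does work, so the issue is the \emph{choice} of expression, not the strategy.)

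The fix is to observe that the integrality question collapses to level $1$: writing $x_0=v_0+\sum_{i\ge2}\omega_i t_i$ with $v_0\in V$ and $t_i\in\bb{Z}\Phi^\vee$, one has $(c,x_0)\in\bb{Z}^n\iff(c,v_0)\in\bb{Z}$, and the hypothesis $t\in\bb{Z}^n\Phi^\vee$ is equivalent to $(1-w)v_0\in\bb{Z}\Phi^\vee$. So you are asking precisely that the stabilizer of $v_0$ in the \emph{ordinary} affine Weyl group $W_a(\Phi)$ is generated by the reflections in walls through $v_0$---which is the classical parabolic statement you thought was unavailable. Invoking it (and then, inside $W(\Phi_{v_0})$, writing $w$ as a product of reflections orthogonal to $\mathrm{Fix}(w)$) yields the required $\sigma_i\in W_n(\Phi)$ with walls containing $\Omega$. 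Your closing remark about type-preservation forcing $r$ even is not correct and not needed: every element of $W_n(\Phi)$ is type-preserving, and odd-length products of genuine wall-reflections are perfectly good elements of $W_n(\Phi)$.
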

\begin{proof}
In the case where $\Phi=A_{1}$, if $\Omega$ is a singleton, it is clear that $\Omega=\mathrm{cl}(\Omega)$. If the cardinality of $\Omega$ is greater than or equal to $2$, the claim follows immediately from Proposition \ref{HypermetricofBA}. 
Next, consider the case where $\Phi\neq A_{1}$. Let $x\in \mathrm{cl}(\Omega)$. Since $g$ maps walls to walls, we have $g(x)\in \mathrm{cl}(\Omega)$. If there are no walls separating $x$ from $g(x)$, then we must have $g(x)=x$. Suppose there exists a wall $H$ that separates $x$ from $g(x)$. In this case, standard arguments imply that $\Omega\subset H$. By induction on $d=\mathrm{rank}(\Phi)$, we obtain the claim.
\end{proof}

This lemma corresponds to \cite[Proposition 2.4.13]{BT}. It should be noted that when $n>1$, the concept of minimal galleries in a Coxeter complex is not applicable.

\begin{exa}
The $(n-1)$-level Babel apartment $\Sigma(n-1,\Phi)$ is enclosure in $\Sigma(n,\Phi)$. Indeed, if $x\in \Sigma(n,\Phi)$ is not contained in $\Sigma(n-1,\Phi)$, then we have $d(0,x)\notin {}^{(n-1)*}\bb{R}$. So there exist a root $a\in \Phi$ and $k_{1},\ldots,k_{n}\in \bb{Z}$ such that $k_{n}\neq 0$ and the wall $H_{a,k}$ separates $x$ and $0$, where $k=k_{n}\omega_{n}+\cdots +k_{1}\omega_{1}$. 
\end{exa}

\begin{exa}
The below figure is an example of enclosed sets. Note that this is not convex unlike the case of $n=1$. Hence it may not be appropriate to call such a set a convex hull. In this example, as can be seen, ${\rm cl}(\{x,y\})$ consists of four affine planes, two affine half-planes and six affine sectors.
\begin{figure}[H]
\begin{center}
\begin{tikzpicture}\label{Figure4}
 \fill[lightgray] (0,0) circle (0.6);
 \fill[lightgray] (2,0) circle (0.6);
 \fill[lightgray] (-1,1.732) circle (0.6);
 \fill[lightgray] (1,1.732) circle (0.6);
 \fill[lightgray] (-2,0) circle (0.6);
 \fill[lightgray] (1,-1.732) circle (0.6);
 \fill[lightgray] (-1,-1.732) circle (0.6);
 \fill[lightgray] (4,0) circle (0.6);
 \fill[lightgray] (3,1.732) circle (0.6);
 \fill[lightgray] (3,-1.732) circle (0.6);
 \fill[lightgray] (5,1.732) circle (0.6);
 \fill[lightgray] (-3,-1.732) circle (0.6);
 \fill[lightgray] (0,-3.464) circle (0.6);
 \fill[lightgray] (-2,-3.464) circle (0.6);
 \fill[lightgray] (-4,-3.464) circle (0.6);
 \fill[lightgray] (2,-3.464) circle (0.6);
 \fill[lightgray] (1,-5.196) circle (0.6);
 \fill[lightgray] (-1,-5.196) circle (0.6);
 \fill[lightgray] (-3,-5.196) circle (0.6);
 \fill[lightgray] (3,-5.196) circle (0.6);
 \coordinate (x) at (-3,-5.196);
 \fill (x) circle (0.06);
 \draw (x) node[left]{$x$};
 \coordinate (y) at (3,1.732);
 \fill (y) circle (0.06);
 \draw (y) node[right]{$y$};
 \draw (x)--(-3,-4.592);
 \draw (-3,-2.332)--(-3,-1.732);
 \draw[dashed] (-3,-4.592)--(-3,-2.332);
 \draw (x)--(-2.480,-4.896);
 \draw (-3,-1.732)--(-2.480,-1.432);
 \draw (0.520,0.300)--(-0.520,-0.300);
 \draw[dashed] (-2.480,-1.432)--(-0.520,-0.300);
 \draw (y)--(2.480,1.432);
 \draw[dashed] (2.480,1.432)--(0.520,0.300);
 \draw (y)--(3,1.132);
 \draw (3,-1.732)--(3,-1.132);
 \draw[dashed] (3,-1.132)--(3,1.132);
 \draw (-0.520,-3.764)--(0.520,-3.164);
 \draw (3,-1.732)--(2.480,-2.032);
 \draw[dashed] (2.480,-2.032)--(0.520,-3.164);
 \draw[dashed] (-0.520,-3.764)--(-2.480,-4.896);
\end{tikzpicture}
\end{center}
\caption{${\rm cl}(\{x,y\})$ in $\Sigma(2,A_{2})$ is inside the parallelogram above.}
\end{figure}
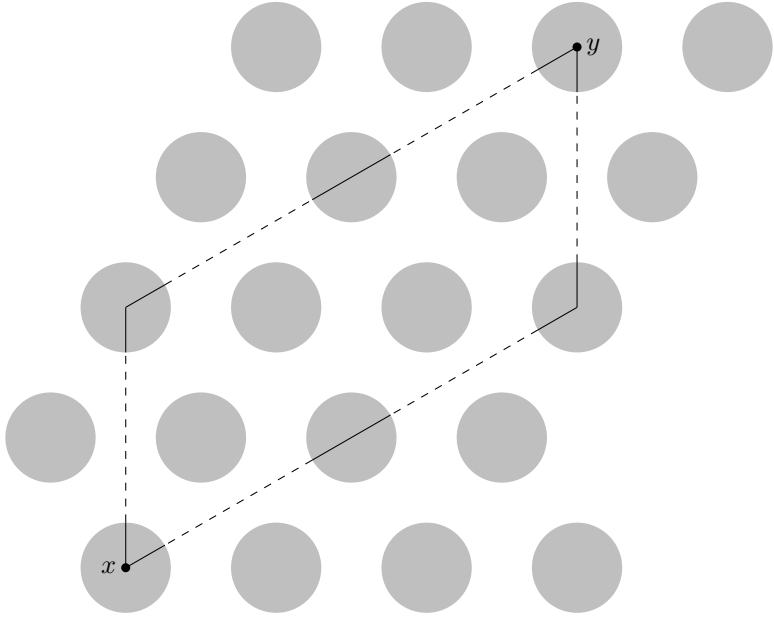
\end{exa}

In addition, we can now define for the $\mb{Bruhat}$ $\mb{order}$, following the $n=1$ case. For any $v,w\in W_{n}(\Phi)$, we define $v\leq w$ to be that $vC_{0}\subset {\rm cl}(C_{0}\cup wC_{0})$. However a study of the Bruhat order on non-Coxeter reflection groups will be future work.

\subsection{The metric structure of Babel buildings and residues at vertices}

\begin{dfn}
Let $X$ be a set and $\mca{A}$ a family of subsets of $X$, each isometric to $\Sigma(n,\Phi)$. For any $i=0,\ldots,n$, a subset $\mf{C}$ of $X$ is called an \textit{$i$-level Babel sector} of $X$ if $\mf{C}$ is an $i$-level Babel sector within some element of $\mca{A}$. We say that the pair $(X, \mca{A})$ is an \textit{$n$-level Babel building} of type $\Phi$ if it satisfies the following conditions:
\begin{enumerate}[(1)]
\item For any $i,j\in \{0,\ldots,n\}$ and any Babel sectors $\mf{C}$ and $\mf{D}$ of levels $i$ and $j$ respectively, there exist a Babel apartment $A\in \mca{A}$ and Babel sub-sectors $\mf{C}'\subset \mf{C}$ and $\mf{D}'\subset \mf{D}$ of the same respective levels such that $\mf{C}'\cup \mf{D}'\subset A$.
\item For any $A_{1},A_{2}\in \mca{A}$, there exists an isometry $\psi\colon A_{1}\longrightarrow A_{2}$ that fixes the intersection $A_{1}\cap A_{2}$ pointwise.
\end{enumerate}
In this case, the family $\mca{A}$ is referred to as a \textit{system of Babel apartments} for $X$.
\end{dfn}

In our framework, a spherical building corresponds to a $0$-level Babel building, and an affine building corresponds to a $1$-level Babel building (cf. \cite[Corollary 2.9.4, 2.9.6]{BT} or \cite[Theorem 11.63]{AB}).

Denote by $\mca{A}_{C}$ the subfamily $\{A\in \mca{A} \mid C\subset A\}$ for any chamber $C$. Similarly, for any $i$-level Babel sector $\mf{C}$, we define
$$
\mca{A}_{\mf{C}}=\{A\in \mca{A} \mid \exists\mf{C}'\subset \mf{C}\text{ such that }\mf{C}'\subset A\},
$$
where $\mf{C}'$ represents a Babel sub-sector of the same level. By definition, the union of all apartments in the system $\mca{A}$ covers $X$.

\begin{prop}
Let $X$ be a Babel building. Then the family $\mca{A}^{\mathrm{comp}}=\{A\subset X \mid A\simeq \Sigma(n,\Phi)\}$ forms a system of Babel apartments.
\end{prop}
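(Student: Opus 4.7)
The strategy is to reduce each of the two axioms for the enlarged system $\mca{A}^{\mathrm{comp}}$ to the corresponding axiom already satisfied by $\mca{A}$, exploiting the inclusion $\mca{A} \subset \mca{A}^{\mathrm{comp}}$.

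Axiom (1) is essentially immediate. By the paper's definition, a Babel sector of $X$ is a sector lying in some element of $\mca{A}$. Given two such sectors $\mf{C}$ and $\mf{D}$ of levels $i$ and $j$, axiom (1) for $\mca{A}$ already yields an apartment $A \in \mca{A}$ together with sub-sectors $\mf{C}' \subset \mf{C}$ and $\mf{D}' \subset \mf{D}$ of the required levels such that $\mf{C}' \cup \mf{D}' \subset A$; since $\mca{A} \subset \mca{A}^{\mathrm{comp}}$, this same $A$ serves as the witness in $\mca{A}^{\mathrm{comp}}$.

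The substantive content is axiom (2). Given $A_{1}, A_{2} \in \mca{A}^{\mathrm{comp}}$, I need to construct a type-preserving $W_{n}(\Phi)$-isometry $\psi \colon A_{1} \to A_{2}$ fixing $A_{1} \cap A_{2}$ pointwise (if the intersection is empty any type-preserving isometry will do). My plan is to cover $A_{1}\cap A_{2}$ by chambers and use apartments of $\mca{A}$ as intermediaries. For each chamber $C \subset A_{1} \cap A_{2}$, apply axiom (1) for $\mca{A}$ to $C$ and to any auxiliary sector of $A_{2}$ to obtain an apartment $B_{C} \in \mca{A}$ containing $C$ together with a sub-sector of $A_{2}$. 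The uniqueness of $W_{n}(\Phi)$-isometries between Babel apartments carrying a chamber to a chamber (the remark following Definition \ref{defofBA}) provides a type-preserving isometry $f_{C} \colon A_{1} \to B_{C}$ fixing $C$, and axiom (2) for $\mca{A}$ applied to the sub-sector above provides $g_{C} \colon B_{C} \to A_{2}$ fixing $C$ as well. The composition $\psi_{C} = g_{C} \circ f_{C}$ is then a candidate isometry $A_{1} \to A_{2}$ fixing $C$ pointwise.

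To globalize, I invoke Lemma \ref{cl-fix}: any two type-preserving $W_{n}(\Phi)$-isometries agreeing on a chamber must agree on its enclosure, hence on every chamber reached by enclosure from $C$. This forces all the local maps $\psi_{C}$ to coincide wherever their domains overlap, so they patch into a single isometry $\psi \colon A_{1} \to A_{2}$. Since $\psi$ fixes every chamber $C \subset A_{1}\cap A_{2}$ pointwise, a final application of Lemma \ref{cl-fix} shows that $\psi$ fixes $\mathrm{cl}\bigl(\bigcup_{C} C\bigr) \supset A_{1} \cap A_{2}$. The principal obstacle I foresee is the failure of gallery connectivity in Babel apartments for $n \ge 2$: the intersection $A_{1}\cap A_{2}$ may split into disconnected clusters living at different levels of the nested structure, so the chamber cover must be chosen to respect that stratification, and the distances $d_{A_{1}}$ and $d_{A_{2}}$ must be shown to coincide on the overlap before the uniqueness in Proposition \ref{HypermetricofBA}(ii) can be applied. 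I expect to resolve this by induction on the level $n$, using the observation that each $(n-1)$-level slice of a Babel apartment is itself a Babel apartment, so axiom (2) for $\mca{A}$ in one level below bootstraps the compatibility of metrics at the current level; the base case $n=1$ is the classical statement for affine buildings.
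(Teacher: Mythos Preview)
Your route diverges sharply from the paper's and carries a real gap. The paper does not use chambers, enclosures, or Lemma~\ref{cl-fix} at all for axiom~(2); instead it gives a purely metric construction. It picks a maximal facet $F$ of $A_{1}\cap A_{2}$, chooses affinely independent points $x_{0},\dots,x_{f}\in F$, extends them to an affine basis $x_{0},\dots,x_{d}$ of $A_{1}$, then selects $y_{f+1},\dots,y_{d}\in A_{2}$ with the same mutual distances, and invokes Proposition~\ref{HypermetricofBA}(ii) to obtain the unique isometry $\varphi_{F}\colon A_{1}\to A_{2}$ determined by these data. The argument never asks whether $A_{1}\cap A_{2}$ contains a chamber, and never appeals to the axioms for $\mca{A}$.

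Your plan, by contrast, hinges on covering $A_{1}\cap A_{2}$ by chambers, and this can simply fail: the intersection may be a facet of strictly smaller dimension (a vertex, a wall segment, etc.), in which case there is no chamber $C\subset A_{1}\cap A_{2}$ to start from. Even when a chamber $C$ exists, your bootstrapping is circular: you apply axiom~(1) for $\mca{A}$ to $C$ and to ``an auxiliary sector of $A_{2}$'', but $A_{2}$ lies only in $\mca{A}^{\mathrm{comp}}$, so its sectors are not known to be Babel sectors of $(X,\mca{A})$ until after the proposition is proved; and your map $g_{C}\colon B_{C}\to A_{2}$ cannot come from axiom~(2) for $\mca{A}$, since that axiom only relates pairs of apartments \emph{in} $\mca{A}$. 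The induction on level you sketch does not remove these issues. The fix is to abandon the chamber--enclosure mechanism here and argue directly with affine bases and Proposition~\ref{HypermetricofBA}(ii), as the paper does; this handles intersections of arbitrary dimension in one stroke.
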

\begin{proof}
It suffices to show that for any elements $A_{1},A_{2}$ of $\mca{A}^{\mathrm{comp}}$, there exists an isometry $A_{1}\longrightarrow A_{2}$ that fixes the intersection. If $A_{1}\cap A_{2}$ is empty, there is nothing to show. 

Suppose the intersection is non-empty. Let $F$ be a maximal facet of $A_{1}\cap A_{2}$ and $f$ the dimension of $F$. Let $x_{0},\ldots,x_{f}$ be affinely independent points in $F$. Let us take $x_{f+1},\ldots,x_{d}\in A_{1}$ such that $x_{0},\ldots,x_{d}$ form an affine basis of $A_{1}$. We choose points $y_{f+1},\ldots,y_{d}$ in $A_{2}$ satisfying
$$
d_{A_{2}}(y_{i}, y_{j})=d_{A_{1}}(x_{i},x_{j})
$$
for all $i,j=0,\ldots,d$ (where we set $y_k = x_k$ for $0 \le k \le f$).
Then, by Proposition \ref{HypermetricofBA}, we can define a map $\varphi_{F}\colon A_{1}\longrightarrow A_{2}$ such that
$$
d_{A_{2}}(y_{i},\varphi_{F}(x))=d_{A_{1}}(x_{i},x)
$$
for any $x\in A_{1}$. Since $\dim F = \dim A_{1}\cap A_{2}$, $\varphi_{F}$ is an isometry that fixes the intersection.
\end{proof}

We call $\mca{A}^{{\rm comp}}$ the complete system of Babel apartments of $X$.

\begin{dfn}
Let $X$ be a Babel building, $A\in \mca{A}$ an apartment, and $C$ a chamber of $A$. For any $x\in X$, there exists a Babel apartment $A'\in \mca{A}$ containing both $x$ and $C$. By definition, there exists an isometry $\psi_{A'}\colon A'\longrightarrow A$ fixing the intersection $A\cap A'$ pointwise. We define $\rho_{A,C}(x)=\psi_{A'}(x)$. 

Let us verify that this map is well-defined. Since $C$ is a chamber, we can choose an affine basis $x_{0},\ldots,x_{d}$ within $C$. Suppose $A'$ and $A''$ are Babel apartments of $X$ containing both $x$ and $C$. Then, for any $i=0,\ldots,d$, we have
$$
d_{A}(x_{i},\psi_{A'}(x))=d_{A}(x_{i},\psi_{A''}(x)),
$$
because $\psi_{A'}$ and $\psi_{A''}$ are isometries fixing $C$ pointwise. Hence, by Proposition \ref{HypermetricofBA}, we obtain $\psi_{A'}(x)=\psi_{A''}(x)$. 
The map
$$
\rho=\rho_{A,C}\colon X\longrightarrow A
$$
is called the \textit{retraction} onto $A$ centered at $C$. It is evident that $\rho^{-1}(C)=C$.
\end{dfn}

\begin{rem}
For any $i$-level Babel sector $\mf{C}$ of $X$ and any Babel apartment $A$ containing an $i$-level Babel sub-sector of $\mf{C}$, we can construct the retraction
$$
\rho_{A,\mf{C}}\colon X\longrightarrow A
$$
in a manner analogous to the definition above. However, we will not discuss these general retractions in this paper.
\end{rem}

For any $x,y\in X$, we define the value $d_{X}(x,y)$ to be $d_{A}(x,y)$, where $A$ is any Babel apartment containing both $x$ and $y$. This assignment is well-defined. In what follows, we strictly refer to $d_{X}$ as the \textit{distance function} on $X$.

\begin{thm}
For any Babel apartment $A$ of $X$ and any chamber $C$ of $A$, the retraction $\rho=\rho_{A,C}\colon X\longrightarrow A$ is distance-decreasing; that is, the inequality
$$
d_{A}(\rho(x),\rho(y))\leq d_{X}(x,y)
$$
holds for any $x,y\in X$.
\end{thm}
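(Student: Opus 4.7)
The plan is to adapt the classical retraction argument for affine buildings to the hyper-metric, non-convex Babel setting. First I would set the stage: by axiom (1) applied to chambers $C_{x}\ni x$ and $C_{y}\ni y$, there is a Babel apartment $A'$ with $x,y\in A'$, so that $d_{X}(x,y)=d_{A'}(x,y)$; and the very definition of $\rho$ forces $d_{A}(c,\rho(z))=d_{X}(c,z)$ for every $c\in C$ and $z\in X$, since the isometry $\psi_{A_{z}}\colon A_{z}\to A$ that defines $\rho(z)$ fixes $A\cap A_{z}\supset C$ pointwise.

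The decisive local observation is that whenever a Babel apartment $A^{*}$ contains $C$, one may take $A_{z}=A^{*}$ for every $z\in A^{*}$ in the defining formula of $\rho$; hence $\rho|_{A^{*}}=\psi_{A^{*}}$ is itself an isometry, and the desired inequality becomes an equality for any two points lying jointly with $C$ in a common apartment.

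The heart of the proof is to produce a finite chain $x=z_{0},z_{1},\ldots,z_{k}=y$ such that (i) each consecutive pair $\{z_{i},z_{i+1}\}$ lies jointly with $C$ in some apartment, and (ii) $\sum_{i}d_{X}(z_{i},z_{i+1})=d_{X}(x,y)$. Together with the local observation and the triangle inequality in $A$ this will yield
\[
d_{A}(\rho(x),\rho(y))\le\sum_{i}d_{A}(\rho(z_{i}),\rho(z_{i+1}))=\sum_{i}d_{X}(z_{i},z_{i+1})=d_{X}(x,y).
\]
The chain is built by walking along the segment from $x$ to $y$ in $E_{A'}={}^{n*}V$: for each chamber $C'$ of $A'$ that the segment meets, axiom (1) applied to $C$ and $C'$ furnishes an apartment $A^{*}_{C'}\supset C\cup C'$ on which $\rho$ is isometric, and a $z_{i}$ is inserted at each chamber boundary the segment crosses.

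The main obstacle is that for $n\ge 2$ a Babel apartment is not connected (for instance $\Sigma(2,A_{1})=\coprod_{m}(2m\omega_{2}+\mathbb{R})$), so the line segment in $E_{A'}$ leaves $A'$ as soon as $x$ and $y$ lie in distinct $(n-1)$-level sub-apartments of $A'$, and a transition across the gap is forced to be a single jump of hyper-real magnitude at least $\omega_{n}$ whose endpoints need not share an apartment with $C$. I would handle this by induction on $n$: within each $(n-1)$-level sub-apartment the inductive hypothesis supplies the required chain, while the hyper-real size of the jumps between sub-apartments, combined with the lexicographic order on ${}^{n*}\mathbb{R}$, absorbs all lower-level contributions and preserves the inequality termwise. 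Proposition \ref{HypermetricofBA}, which determines a point from its distances to an affine basis of $C$, is what guarantees the local isometries glue consistently at chamber boundaries, and Lemma \ref{cl-fix} gives the uniqueness of the type-preserving isometries used at the inductive step.
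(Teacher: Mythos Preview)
Your overall strategy---restrict $\rho$ to apartments $A^{*}\in\mca{A}_{C}$ where it becomes an isometry, then cover the segment $[x,y]$ by finitely many such apartments and sum the triangle inequality---is exactly the paper's strategy. The local observation that $\rho|_{A^{*}}$ is the fixed isometry $\psi_{A^{*}}$ whenever $C\subset A^{*}$ is correct and is used in the paper in the same way.

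The gap is precisely at the step you flag yourself. When $x$ and $y$ lie in distinct $(n-1)$-level components of $A'$, you need a single ``jump'' pair $u,v$ with $d_{A}(\rho(u),\rho(v))\le d_{X}(u,v)$, yet you concede that $u,v$ need not share an apartment with $C$. The sentence ``the lexicographic order absorbs all lower-level contributions'' does not supply this inequality: $d_{A}(\rho(u),\rho(v))$ and $d_{X}(u,v)$ have the same leading $\omega_{n}$-scale, so nothing is absorbed, and without a common $A^{*}\supset C\cup\{u,v\}$ you have no control on the difference. Your induction on $n$ (via residues) only treats pairs at distance in ${}^{(n-1)*}\bb{R}$ and says nothing about the jump itself.

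The paper closes this gap with two devices you do not invoke. First, it covers $[x,y]\subset A'$ not by chambers but by Babel \emph{sectors} of descending level: by axiom (1) each $(n-1)$-level sector $\mf{C}_{i}$ meeting $[x,y]$ contains a sub-sector $\mf{C}_{i}'$ lying in some $A_{i}\in\mca{A}_{C}$, and the gap $\mf{C}_{i}\setminus\mf{C}_{i}'$ is covered by finitely many $(n-2)$-level sectors, to which the same step is applied; the induction is on the sector level, not on $n$. Second---and this is the point replacing your ``jump''---for two \emph{adjacent} $m$-level sectors $\mf{C}\subset A_{1}$, $\mf{D}\subset A_{2}$ with $A_{1},A_{2}\in\mca{A}_{C}$ and $\mf{C},\mf{D}\subset A'$, the paper passes to the hyper-affine enveloping spaces $E_{A'}=E_{A_{i}}={}^{n*}V$ and extends the isometries $\varphi_{i}=\rho|_{A_{i}}$ to $\tilde\varphi_{i}\colon E_{A_{i}}\to E_{A}$. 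In the enveloping space the lifted sectors $\tilde{\mf{C}},\tilde{\mf{D}}$ are genuinely adjacent (their closures meet), so the \emph{Euclidean} segment $[x,y]_{E}\subset E_{A'}$ hits a point $z\in\overline{\tilde{\mf{C}}\cap\tilde{\mf{D}}}$, which need not lie in $X$ at all. Since $z$ is in the common wall, $\tilde\varphi_{1}(z)=\tilde\varphi_{2}(z)$, and the triangle inequality in $E_{A}$ gives
\[
d_{A}(\rho(x),\rho(y))\le d_{E_{A}}(\tilde\varphi_{1}(x),\tilde\varphi_{1}(z))+d_{E_{A}}(\tilde\varphi_{2}(z),\tilde\varphi_{2}(y))=d_{E_{A'}}(x,z)+d_{E_{A'}}(z,y)=d_{X}(x,y).
\]
In short: the missing idea is to leave $X$ and work in ${}^{n*}V$, where the non-convexity disappears and the ``jump'' becomes an honest geodesic crossing of a wall.
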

\begin{proof}
First, suppose that $x,y$ are vertices of $X$. Let $A'$ be a Babel apartment which contains $x,y$. We show that the geodesic defined by $[x,y]=\{z\in A' \mid d_{A'}(x,y)=d_{A'}(x,z)+d_{A'}(z,y)\}$ is covered by finitely many elements of $\mca{A}_{C}$. Suppose there exist $(n-1)$-level Babel sectors $\mf{C}_{1}, \ldots, \mf{C}_{m}$ such that $[x,y] \subset \mf{C}_{1}\cup \cdots \cup \mf{C}_{m}$. Then, for each $i=1,\ldots, m$, we can choose an $(n-1)$-level Babel sub-sector $\mf{C}_{i}' \subset \mf{C}_{i}$ such that $\mf{C}_{i}'$ is included in some apartment belonging to $\mca{A}_{C}$. The geodesic connecting the cone points of $\mf{C}_{i}'$ and $\mf{C}_{i}$ is covered by finitely many $(n-2)$-level Babel sectors. Consequently, we have a decomposition
$$
\mf{C}_{i} = \mf{C}_{i}'\cup\mf{D}_{1}\cup\cdots\cup\mf{D}_{k},
$$
where each $\mf{D}_{j}$ is an $(n-2)$-level Babel sector. Similarly, for each $j$, we can choose an $(n-2)$-level Babel sub-sector $\mf{D}_{j}'\subset \mf{D}_{j}$ such that $\mf{D}_{j}'$ is included in some apartment belonging to $\mca{A}_{C}$. By repeating this argument via descending induction down to level $0$, we obtain the conclusion.

Based on the discussion above, we assume that $x$ and $y$ are contained in adjacent $m$-level Babel sectors $\mf{C}$ and $\mf{D}$ of two Babel apartments $A_{1}, A_{2} \in \mca{A}_{C}$, respectively. Note that the three objects---the chamber $C$, $\mf{C}$, and $\mf{D}$---may not necessarily lie within the same Babel apartment. 

\textbf{Case 1.} First, we suppose that $\mf{C}$ and $\mf{D}$ are included in $A'$.
Let $\varphi_{i}$ denote the restriction of the retraction $\rho_{A,C}\colon X \longrightarrow A$ to $A_{i}$ for $i=1,2$.
There exist elements $w_{c},w_{d} \in W(\Phi)\ltimes \bigoplus_{j>m}\bb{Z}\omega_{j}(\Phi^{\vee})$ such that 
$$
\mf{C}=w_{c}(\mathbf{D}_{m}), \quad \mf{D}=w_{d}(\mathbf{D}_{m}),
$$
where $\mathbf{D}_{m}$ denotes a fundamental domain of the group $W(\Phi)\ltimes \bigoplus_{j>m}\bb{Z}\omega_{j}(\Phi^{\vee})$.
For $i=1,2$, let $E_{i}$ be the hyper-affine enveloping space of $A_{i}$, and let $\tilde{\varphi}_{i}\colon E_{i}\longrightarrow E_{A}$ denote the extension of the isometry $\varphi_{i}$.
Let $\tilde{\mathbf{D}}_{m}$ denote the fundamental domain of $W(\Phi)\ltimes \bigoplus_{j>m}\bb{Z}\omega_{j}(\Phi^{\vee})$ in the enveloping space, and define
$$
\tilde{\mf{C}}=w_{c}(\tilde{\mathbf{D}}_{m}), \quad \tilde{\mf{D}}=w_{d}(\tilde{\mathbf{D}}_{m}).
$$
Consequently, the lifted sectors $\tilde{\mf{C}}$ and $\tilde{\mf{D}}$ are adjacent in the enveloping space. Let $z$ be a point in the intersection $[x,y]_{E}\cap \overline{(\tilde{\mf{C}}\cap \tilde{\mf{D}})}$, where $[x,y]_{E}$ denotes the geodesic segment connecting $x$ and $y$ within the enveloping space.
Since $z$ lies in the intersection, we have $\tilde{\varphi}_{1}(z)=\tilde{\varphi}_{2}(z)$.
Thus, applying the triangle inequality in $E_{A}$, we obtain:
\begin{align*}
d_{A}(\rho(x),\rho(y)) &\leq d_{E_{A}}(\tilde{\varphi}_{1}(x),\tilde{\varphi}_{1}(z)) + d_{E_{A}}(\tilde{\varphi}_{2}(z),\tilde{\varphi}_{2}(y)) \\
&=d_{E_{1}}(x,z)+d_{E_{2}}(z,y) \\
&=d_{E_{A'}}(x,z)+d_{E_{A'}}(z,y) \\
&=d_{X}(x,y).
\end{align*}

\textbf{Case 2.} Next, we consider the general case where $\mf{C}$ and $\mf{D}$ are not necessarily included in $A'$. Let $\mf{C}'$ and $\mf{D}'$ be sub-sectors of $\mf{C}$ and $\mf{D}$ of the same level, respectively, such that $\mf{C}', \mf{D}'\subset A'$. Then there exist Babel sectors $\mf{E}_{1},\ldots, \mf{E}_{p}$ and $\mf{F}_{1}, \ldots, \mf{F}_{q}$ of level strictly less than $m$ such that
\begin{align*}
\mf{C}&=\mf{C}'\cup\mf{E}_{1}\cup\cdots \cup \mf{E}_{p}, \\
\mf{D}&=\mf{D}'\cup\mf{F}_{1}\cup\cdots \cup \mf{F}_{q},
\end{align*}
and for any $r=1,\ldots, p$ and $s=1,\ldots, q$, the sectors $\mf{E}_{r}$ and $\mf{F}_{s}$ are included in some apartments belonging to $\mca{A}_{C}$.
\if0
If necessary, by taking further sub-sectors, we can assume that adjacent pairs in the sequence of sectors are included in the same apartments.
\fi

Let $x', y'$ be the cone points of $\mf{C}', \mf{D}'$ respectively. 
Note that the paths connecting $x$ to $x'$ and $y$ to $y'$ are covered by sectors of level less than $m$. Therefore, by the induction hypothesis on the level, the retraction $\rho$ is distance-decreasing on these paths.
Thus we have
\begin{align*}
d_{A}(\rho(x), \rho(y)) 
&\leq d_{A}(\rho(x), \rho(x')) + d_{A}(\rho(x'),\rho(y')) + d_{A}(\rho(y'),\rho(y)) \\
&\leq d_{X}(x,x')+d_{X}(x',y')+d_{X}(y',y) \\
&=d_{X}(x,y).
\end{align*}
This completes the proof.
\end{proof}

The proof above relies on the logic established for the $n=1$ case (i.e., affine buildings). However, for $n>1$, a geodesic segment in $X$ is not necessarily partitioned by finitely many chambers.

\begin{cor}
Let $X$ be an $n$-level Babel building. Then $(X,d_{X})$ constitutes an ${}^{n*}\bb{R}$-metric space. Moreover, for any $x,y,z\in X$, if $t\in {}^{n*}[0,1]$ is such that the point $p_{t}=(1-t)x+ty$ lies in $X$, then the CAT($0$)-inequality
$$
d_{X}^{2}(p_{t},z)\leq (1-t)d_{X}^{2}(x,z)+td_{X}^{2}(y,z)-t(1-t)d_{X}^{2}(x,y)
$$
holds.
\end{cor}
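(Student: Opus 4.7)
The plan is to reduce both claims to statements inside a single Babel apartment $A$, where Proposition \ref{HypermetricofBA} already provides them, and then to transfer the resulting inequalities to $X$ via judiciously centered retractions $\rho_{A,C}$, invoking the distance-decreasing property of the theorem just proved.

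For the metric axioms, well-definedness of $d_{X}$, symmetry and positivity are immediate from the corresponding properties of $d_{A}$ on each apartment together with axiom (2) of the Babel building: if $x,y$ lie in two apartments $A_{1},A_{2}$, the fixing isometry $\psi\colon A_{1}\to A_{2}$ from axiom (2) fixes $x$ and $y$ pointwise, so $d_{A_{1}}(x,y)=d_{A_{2}}(x,y)$. For the triangle inequality I would pick an apartment $A$ containing $x$ and $z$ and any chamber $C\subset A$, and set $\rho=\rho_{A,C}$. Since $\rho$ restricts to the identity on $A$, applying the triangle inequality in $A$ (Proposition \ref{HypermetricofBA}(i)) to the triple $x,\rho(y),z$ and then invoking distance-decreasing gives
\begin{align*}
d_{X}(x,z)=d_{A}(\rho(x),\rho(z))
&\leq d_{A}(\rho(x),\rho(y))+d_{A}(\rho(y),\rho(z))\\
&\leq d_{X}(x,y)+d_{X}(y,z).
\end{align*}

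For the CAT(0)-inequality, the natural apartment to work in is one containing $x$ and $y$; under the hypothesis $p_{t}\in X$ this apartment $A$ also contains $p_{t}$. The crucial move is the choice of the retraction center. I would take $C$ to be a chamber of $A$ whose closure contains $p_{t}$, put $\rho=\rho_{A,C}$, and pick an apartment $A'$ containing both $z$ and $C$. Then $\overline{C}\subset A\cap A'$, so the fixing isometry $\psi_{A'}\colon A'\to A$ fixes $p_{t}$, which gives the \emph{equality} $d_{A}(\rho(z),p_{t})=d_{A'}(z,p_{t})=d_{X}(z,p_{t})$. Distance-decreasing then yields $d_{A}(\rho(z),x)\leq d_{X}(z,x)$ and $d_{A}(\rho(z),y)\leq d_{X}(z,y)$, while $d_{A}(x,y)=d_{X}(x,y)$ holds trivially. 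Applying the parallelogram identity Proposition \ref{HypermetricofBA}(iii) inside $A$ to the four points $\rho(z),x,y,p_{t}$ and substituting these three relations produces the claimed CAT(0)-inequality.

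The main technical point, and the reason the argument is not purely formal, is exactly this centering: one needs $\rho$ to fix $p_{t}$ so that the distance from $z$ to $p_{t}$ is preserved on the nose, while the distances from $\rho(z)$ to $x$ and $y$ enter with positive coefficients and therefore may be bounded above. Because Babel apartments are non-convex for $n>1$, one cannot substitute the classical geodesic-interpolation argument, and this "chamber-containing-$p_{t}$" choice of center is precisely what makes the three retraction estimates plug into Proposition \ref{HypermetricofBA}(iii) with consistent signs.
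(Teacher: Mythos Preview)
Your proof is correct and follows essentially the same route as the paper's: reduce to a single apartment via a retraction centered at a chamber containing $p_{t}$, so that the distance $d_{X}(p_{t},z)$ is preserved exactly while the other two distances are only bounded above, and then invoke Proposition~\ref{HypermetricofBA}(iii). In fact your argument is slightly more explicit than the paper's, which simply asserts ``assuming the equality $d_{X}(p_{t},\rho(z))=d_{X}(p_{t},z)$ holds for a suitable choice of $C$'' without spelling out, as you do, that this follows because $p_{t}\in\overline{C}\subset A\cap A'$ is fixed by the isometry $\psi_{A'}$ defining $\rho(z)$.
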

\begin{proof}
Let $x,y,z\in X$. Let $A$ be a Babel apartment containing $x$ and $y$. For a chamber $C$ of $A$, consider the retraction $\rho=\rho_{A,C}\colon X\longrightarrow A$. Since $x,y\in A$ and $\rho$ is distance-decreasing, we have
$$
d_{X}(x,y) = d_{A}(\rho(x),\rho(y)) \leq d_{A}(\rho(x),\rho(z))+d_{A}(\rho(z),\rho(y)) \leq d_{X}(x,z)+d_{X}(y,z).
$$
Hence $(X,d_{X})$ is a hyper-metric space. 
Moreover, if $p_{t}$ belongs to $X$, by choosing a chamber $C$ such that $p_{t}\in \overline{C}$, we have $\rho(p_{t})=p_{t}$.
Assuming the equality $d_{X}(p_{t},\rho(z))=d_{X}(p_{t},z)$ holds for a suitable choice of $C$, combined with the inequalities $d_{X}(x,\rho(z))\leq d_{X}(x,z)$ and $d_{X}(y,\rho(z))\leq d_{X}(y,z)$, we can apply Proposition \ref{HypermetricofBA} in $A$ to obtain:
$$
d_{X}^{2}(p_{t},z)\leq (1-t)d_{X}^{2}(x,z)+td_{X}^{2}(y,z)-t(1-t)d_{X}^{2}(x,y).
$$
\end{proof}

As a remark, when $n\geq 2$, an $n$-level Babel apartment is not convex. So an $n$-level Babel building is not convex also. Hence it is not necessarily $p_{t}=(1-t)x+ty\in X$ for any $x,y\in X$ and $t\in {}^{n*}[0,1]$.

\begin{rem}
We can now provide a brief description of the geometric shape of $n$-level Babel buildings. For each $i=1,\ldots,n$, let $X_{i}$ be an affine building scaled by the factor $\omega_{i}$. Specifically, the metric on $X_{i}$ takes values in ${}^{i*}\bb{R}$:
$$
d_{X_{i}}\colon X_{i}\times X_{i}\longrightarrow {}^{i*}\bb{R}.
$$
Let $\mca{V}(X_{i})$ denote the set of vertices of $X_{i}$. The construction proceeds recursively: start with the vertices $\mca{V}(X_{n})$. We replace each vertex in $\mca{V}(X_{n})$ with a copy of the space $X_{n-1}$ (or its vertex set). Repeat this operation recursively. Finally, the space obtained by replacing the vertices at the last stage with $X_{1}$ constitutes the Babel building. 
Thus, a Babel building forms a nested structure of affine buildings. This structure parallels the fact that $n$-dimensional local fields are obtained as nested structures of complete discrete valuation fields. In the preface of their monumental paper \cite{BT}, Bruhat and Tits explained their theory as follows:
\begin{quote}
De fa\c{c}on imag\'ee, la th\'eorie qui sera expos\'ee ici, \`a partir du chapitre II, peut \^etre d\'ecrite comme l'\'etude d'un groupe semi-simple d\'efini sur un corps local $K$ — ou plut\^ot du groupe $G$ de ses points rationnels sur $K$ — consid\'er\'e comme \guillemotleft{} objet de dimension infinie \guillemotright{} d\'efini sur le corps r\'esiduel $k$.
\end{quote}
This philosophy is shared by our theory. Just as a $1$-level Babel building (an affine building) can be viewed as an infinite collection of $0$-level Babel buildings (spherical buildings), an $n$-level Babel building is an infinite collection of $(n-1)$-level Babel buildings.
\end{rem}

\begin{dfn}
Let $(Y,d_{Y})$ be an ${}^{n*}\bb{R}$-metric space and let $B$ be a subset of $Y$.
\begin{enumerate}[(i)]
\item A subset $B$ is said to be \textit{bounded} if for any $y\in Y$, there exists a supremum
$$
r(y,B)=\sup_{b\in B} d_{Y}(y,b).
$$
\item If the infimum
$$
r(B)=\inf_{y\in Y}r(y,B)
$$
exists, then $r(B)$ is called the \textit{circumradius} of $B$. A point $y\in Y$ satisfying $r(B)=r(y,B)$ is called a \textit{circumcenter} of $B$.
\end{enumerate}
\end{dfn}

\begin{lem}
Let $X$ be an $n$-level Babel building. If $B$ is a bounded subset of $X$, then there exists at most one circumcenter of $B$.
\end{lem}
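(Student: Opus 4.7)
The plan is to run the classical CAT(0) uniqueness-of-circumcenter argument in this hyper-real setting, using Proposition~\ref{HypermetricofBA}(iii) as the substitute for the parallelogram identity. Suppose toward contradiction that $y_1 \ne y_2$ are two circumcenters of $B$ with common value $r := r(B)$. First I would choose a Babel apartment $A$ containing both $y_1$ and $y_2$; such an $A$ exists by the same reasoning that makes $d_X$ well defined. Next I would locate an intermediate point $p = p_t = (1-t)y_1 + t y_2$, for some $t \in {}^{n*}(0,1)$, which still lies in $A$.

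Given such a $p$, I pick a chamber $C \subset A$ with $p \in \overline{C}$ and form the retraction $\rho = \rho_{A,C} \colon X \to A$. Two features are essential: (i) because the partial isometries $\psi_{A'}$ defining $\rho$ fix $\overline{C}$ pointwise, one has $d_X(p,b) = d_A(p, \rho(b))$ for every $b \in B$; (ii) the distance-decreasing property gives $d_A(y_i, \rho(b)) \le d_X(y_i, b) \le r$ for $i = 1, 2$. Applying Proposition~\ref{HypermetricofBA}(iii) inside $A$ with $x = y_1$, $y = y_2$, $z = \rho(b)$, and then passing to $\sup_{b \in B}$ on both sides (using $\sup(f+g)\le \sup f + \sup g$ together with the fact that the last term on the right is constant in $b$), one obtains
\[
r(p,B)^2 \;=\; \sup_{b \in B} d_A^2(p, \rho(b)) \;\le\; r^2 \;-\; t(1-t)\, d_X^2(y_1, y_2).
\]
Because $t(1-t) > 0$ and $d_X(y_1, y_2) > 0$, the right-hand side is strictly less than $r^2$, hence $r(p,B) < r(B)$, contradicting the definition of $r(B)$ as an infimum. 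Therefore $y_1 = y_2$.

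The principal obstacle is the very first step: producing an intermediate point $p_t \in A$ with $t \in (0,1)$. When $y_1$ and $y_2$ lie in the same $V$-component of $\Sigma(n,\Phi) = V + \bigoplus_{i \ge 2}\bb{Z}\omega_i(\Phi^\vee)$, one simply takes $t = \tfrac{1}{2}$ and $p_t$ is the ordinary midpoint, which lies in that same component. In general, however, the non-convexity of Babel apartments for $n \ge 2$ prevents the $E_A$-midpoint from landing in $A$. To address the general case I would exploit the hyper-real freedom in $t$ (for instance $t = s/\omega_j$ for suitable level $j$ and real $s > 0$, chosen so that $t(y_2 - y_1)$ falls back into the lattice structure of $\Sigma(n,\Phi)$), or alternatively proceed by induction on $n$, reducing via the $(n-1)$-level sub-apartment $\{y \in A \mid d_A(y_1, y) \in {}^{(n-1)*}\bb{R}\}$ to the case $d_X(y_1, y_2) \in {}^{(n-1)*}\bb{R}$, with the classical affine case $n = 1$ as base.
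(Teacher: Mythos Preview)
Your argument is essentially the paper's: assume two circumcenters, locate an intermediate point $p_t$, apply the CAT(0)-type inequality to obtain $r(p_t,B)^2 \le r(B)^2 - t(1-t)d_X^2(y_1,y_2) < r(B)^2$, and contradict the minimality of $r(B)$. The only cosmetic difference is that you unpack the inequality via a retraction and Proposition~\ref{HypermetricofBA}(iii), whereas the paper invokes its CAT(0) corollary directly; the underlying computation is identical.

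Regarding the obstacle you single out---the existence of some $t \in {}^{n*}(0,1)$ with $p_t \in X$ despite the non-convexity of Babel apartments for $n \ge 2$---the paper's proof does not resolve it either: it simply opens with ``Assume there exists $t \in {}^{n*}(0,1)$ such that the point $p_t = (1-t)x+ty$ belongs to $X$'' and proceeds from there, without justifying that hypothesis. So your diagnosis of the difficulty is accurate, and your attempt to address it (via hyper-real $t$ or induction on the level) already goes beyond what the paper supplies.
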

\begin{proof}
Suppose that $x,y\in X$ are circumcenters of $B$. Assume there exists $t\in {}^{n*}(0,1)$ such that the point $p_{t}=(1-t)x+ty$ belongs to $X$. By applying the CAT($0$)-inequality, we obtain
$$
r^{2}(p_{t},B)\leq (1-t)r^{2}(x,B)+tr^{2}(y,B)-t(1-t)d_{X}^{2}(x,y).
$$
Since $x$ and $y$ are circumcenters, we have $r(x,B)=r(y,B)=r(B)$. Furthermore, by the definition of the circumradius, $r(B) \leq r(p_{t},B)$ holds. Consequently,
$$
r^{2}(B) \leq r^{2}(p_{t},B) \leq r^{2}(B) - t(1-t)d_{X}^{2}(x,y).
$$
This implies
$$
t(1-t)d_{X}^{2}(x,y) \leq 0.
$$
Since $t(1-t) > 0$, we must have $d_{X}(x,y)=0$, which implies $x=y$.
\end{proof}

We now state, as a proposition, the claim of a higher-dimensional generalization of the Bruhat-Tits fixed point theorem.

%固定点定理
\begin{prop}
Let $G$ be a group acting isometrically on an $n$-level Babel building $X$, and let $B$ be a bounded subset of $X$. If $G$ stabilizes $B$ and $B$ admits a circumcenter, then the circumcenter of $B$ is a fixed point of the $G$-action on $X$.
\end{prop}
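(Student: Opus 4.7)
The plan is to exploit the uniqueness of the circumcenter established in the preceding lemma, together with the fact that isometries transport circumcenters to circumcenters.

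First I would let $c \in X$ be a circumcenter of $B$, so that $r(c, B) = r(B)$. Fix any $g \in G$. Since $g$ acts isometrically, for every $b \in B$ we have $d_X(gc, gb) = d_X(c, b)$, and hence
\begin{align*}
r(gc, gB) &= \sup_{b \in B} d_X(gc, gb) = \sup_{b \in B} d_X(c, b) = r(c, B).
\end{align*}
By hypothesis $gB = B$, so $r(gc, B) = r(c, B) = r(B)$. This shows that $gc$ is also a circumcenter of $B$.

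Next I would invoke the uniqueness part. The preceding lemma asserts that a bounded subset of an $n$-level Babel building admits at most one circumcenter. Since both $c$ and $gc$ are circumcenters of $B$, we conclude $gc = c$. As $g \in G$ was arbitrary, the point $c$ is fixed by the entire $G$-action.

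I do not anticipate any substantial obstacle: the content of the statement is essentially a one-line consequence of the uniqueness lemma, and all the geometric difficulty — the non-convexity of Babel apartments, the hyper-real CAT($0$)-inequality, and the distance-decreasing property of retractions — has already been absorbed into the proof of uniqueness. The only small point worth noting is that the hypothesis ``$B$ admits a circumcenter'' is needed precisely because, in the hyper-real setting, the existence of an infimum over $X$ is not automatic; once it is granted, the isometric $G$-action plus uniqueness immediately forces that circumcenter to be $G$-fixed.
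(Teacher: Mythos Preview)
Your proof is correct and follows exactly the same approach as the paper: the paper's own argument simply notes that an isometry stabilizing $B$ must send a circumcenter to a circumcenter, and then invokes the uniqueness lemma. Your version merely unpacks the verification that $r(gc,B)=r(B)$ in slightly more detail, but the logic is identical.
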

\begin{proof}
This follows immediately from the previous lemma. Since $G$ stabilizes $B$ and acts by isometries, it must map a circumcenter to a circumcenter. By the uniqueness of the circumcenter, the point must be fixed.
\end{proof}

In contrast to the classical case where $n=1$, the space $X$ is not convex; consequently, a bounded subset does not necessarily possess a circumcenter. Therefore, our fixed point theorem requires an explicit assumption regarding the existence of a circumcenter, a condition that is automatically satisfied at level $1$.

\begin{dfn}
Let $X$ be an $n$-level Babel building and $p$ a vertex of $X$. For any subset $U$ of $X$, we define
$$
r_{p}U=\{u\in U \mid d_{X}(p,u)\in {}^{(n-1)*}\bb{R}\}.
$$
We call this set the \textit{residue} of $U$ at the vertex $p$.
\end{dfn}

\begin{thm}
For any vertex $p$ of $X$, the residue
$$
r_{p}X=\{x\in X \mid d_{X}(p,x)\in {}^{(n-1)*}\bb{R}\}
$$
constitutes an $(n-1)$-level Babel building.
\end{thm}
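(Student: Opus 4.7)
The plan is to construct an apartment system for $r_{p}X$ out of the apartments of $X$ and then verify the two defining axioms of a Babel building. Concretely, I would take
\[
\mca{A}' := \{\, A \cap r_{p}X \mid A \in \mca{A},\ A \cap r_{p}X \neq \emptyset\,\}
\]
as the candidate system. For any $A' = A \cap r_{p}X$ and any $y \in A'$, the triangle inequality in $X$ yields the characterization
\[
A' = \{\, x \in A \mid d_{A}(x,y) \in {}^{(n-1)*}\bb{R}\,\},
\]
which by the nested-apartment remark is an $(n-1)$-level Babel apartment, in particular $A' \simeq \Sigma(n-1,\Phi)$. The coverage $\bigcup \mca{A}' = r_{p}X$ is immediate because $\bigcup \mca{A} = X$.

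The second step is the sector identification: I would show that for each $i \in \{0,\ldots,n-1\}$, an $i$-level Babel sector of $r_{p}X$ agrees, as a subset, with an $i$-level Babel sector of $X$ whose apex lies in $r_{p}X$. The point is that for such $i$, the group $W(\Phi) \ltimes \bigoplus_{j=i+1}^{n}\bb{Z}\omega_{j}(\Phi^{\vee})$ contains the $\omega_{n}$-translations, which permute the $(n-1)$-level residues of $\Sigma(n,\Phi)$. A canonical fundamental domain $\mb{D}_{i}$ therefore sits inside a single residue, and it coincides there with a fundamental domain in $\Sigma(n-1,\Phi)$ for the smaller group $W(\Phi) \ltimes \bigoplus_{j=i+1}^{n-1}\bb{Z}\omega_{j}(\Phi^{\vee})$; so the translates $y + \mb{D}_{i}$ with $y \in r_{p}X$ serve simultaneously as $i$-level sectors of $A$ and of $A \cap r_{p}X$.

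Granted this identification, axiom (1) for $(r_{p}X, \mca{A}')$ follows directly from axiom (1) for $(X,\mca{A})$: any two Babel sectors $\mf{C}, \mf{D}$ of levels $i, j \leq n-1$ in $r_{p}X$ are in particular sectors of $X$, so there exist $B \in \mca{A}$ and sub-sectors $\mf{C}' \subset \mf{C}$, $\mf{D}' \subset \mf{D}$ with $\mf{C}' \cup \mf{D}' \subset B$; the apices of $\mf{C}', \mf{D}'$ still lie in $r_{p}X$ (since $\mf{C}, \mf{D} \subset r_{p}X$), so both remain sectors of $r_{p}X$ sitting inside $B \cap r_{p}X \in \mca{A}'$.

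Finally I would verify axiom (2) by restricting the $X$-apartment isometries. If two apartments $A_{1}', A_{2}' \in \mca{A}'$ have empty intersection, any isometry between the two copies of $\Sigma(n-1,\Phi)$ (which exist since they are isometric) satisfies the vacuous fixing condition. Otherwise I choose $y \in A_{1}' \cap A_{2}' \subset A_{1} \cap A_{2}$; axiom (2) for $X$ produces $\varphi\colon A_{1} \longrightarrow A_{2}$ fixing $A_{1}\cap A_{2}$ pointwise, so $\varphi(y)=y$. A triangle-inequality estimate
\[
d_{X}(\varphi(x),p) \leq d_{A_{2}}(\varphi(x),y) + d_{X}(y,p) = d_{A_{1}}(x,y) + d_{X}(y,p),
\]
together with $d_{A_{1}}(x,y) \leq d_{X}(x,p) + d_{X}(p,y) \in {}^{(n-1)*}\bb{R}$ for $x \in A_{1}'$, shows $\varphi(A_{1}') \subset A_{2}'$, and applying the same argument to $\varphi^{-1}$ yields the desired isometric restriction fixing $A_{1}' \cap A_{2}'$. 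I expect the main technical obstacle to lie in the sector identification of the second step, which requires carefully matching the fundamental-domain definitions at levels $n$ and $n-1$; once this is pinned down, the two Babel-building axioms for $r_{p}X$ essentially inherit from those of $X$.
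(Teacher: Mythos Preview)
Your proposal is correct and follows essentially the same approach as the paper: both take $r_p\mca{A} = \{A \cap r_pX \mid A \in \mca{A}\}\setminus\{\emptyset\}$ as the apartment system, identify sectors of $r_pX$ with low-level sectors of $X$, and verify the two axioms by inheriting them from $X$ via triangle-inequality estimates. Your second step (matching fundamental domains at levels $n$ and $n-1$) makes explicit what the paper asserts in one line, and your axiom~(2) verification spells out the restriction argument more carefully than the paper does, but the strategies are the same.
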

\begin{proof}
Let $\mca{A}$ be a system of Babel apartments of $X$. We define the family of subsets
$$
r_{p}\mca{A}=\{r_{p}A \mid A\in \mca{A}\}\setminus\{\emptyset\}.
$$
We claim that $r_{p}\mca{A}$ forms a system of Babel apartments for $r_{p}X$. 
First, note that any element of $r_{p}\mca{A}$ is an $(n-1)$-level Babel apartment (by the nesting structure of apartments). 

Let $\mf{C}$ and $\mf{D}$ be Babel sectors of levels $i$ and $j$ in $r_{p}X$, respectively. Note that these are also Babel sectors of $X$. By condition (1) of the definition of a Babel building, there exists a Babel apartment $A \in \mca{A}$ containing some Babel sub-sectors $\mf{C}'\subset \mf{C}$ and $\mf{D}'\subset \mf{D}$ of the corresponding levels. 
Let $x, x'$ and $y, y'$ be the cone points of $\mf{C}, \mf{C}'$ and $\mf{D}, \mf{D}'$, respectively. 
By the property of sub-sectors in a Babel apartment, we have
$$
d_{X}(x,x'), \ d_{X}(y,y')\in {}^{(n-1)*}\bb{R}.
$$
Since $\mf{C},\mf{D}\subset r_{p}X$, we know that
$$
d_{X}(p,x), \ d_{X}(p,y)\in {}^{(n-1)*}\bb{R}.
$$
By the triangle inequality, we obtain
$$
d_{X}(p,x'), \ d_{X}(p,y')\in {}^{(n-1)*}\bb{R}.
$$
Consequently, $\mf{C}',\mf{D}'\subset r_{p}A$. 

Next, for any two elements $r_{p}A_{1}$ and $r_{p}A_{2}$ of $r_{p}\mca{A}$, we construct an isometry $r_{p}\psi\colon r_{p}A_{1}\longrightarrow r_{p}A_{2}$ that fixes the intersection pointwise. Suppose that the intersection $r_{p}A_{1}\cap r_{p}A_{2}$ is non-empty. Let $\psi\colon A_{1}\longrightarrow A_{2}$ be an isometry that fixes the intersection $A_{1}\cap A_{2}$. We define $r_{p}\psi$ to be the restriction of $\psi$ to $r_{p}A_{1}$. Since $\psi$ preserves distances and fixes points in the intersection (which are close to $p$), it maps $r_{p}A_{1}$ onto $r_{p}A_{2}$.
Therefore, $r_{p}X$ is a Babel building of level $n-1$, and $r_{p}\mca{A}$ serves as its system of Babel apartments.
\end{proof}

\begin{dfn}
We refer to the pair $(r_{p}X,r_{p}\mca{A})$ as the \textit{residue Babel building} of $(X,\mca{A})$ at a vertex $p\in X$.
\end{dfn}

\begin{cor}
For any vertices $p,q\in X$, if $d_{X}(p,q)\in {}^{(n-1)*}\bb{R}$, then $r_{p}X=r_{q}X$. Otherwise, the intersection $r_{p}X\cap r_{q}X$ is empty.
\end{cor}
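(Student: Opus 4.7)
The plan is to derive both claims directly from the triangle inequality for $d_X$, combined with the closure properties of ${}^{(n-1)*}\bb{R}$ inside ${}^{n*}\bb{R}$ (namely closure under addition of non-negative elements, and the downward inheritance already used tacitly in the preceding proof that $r_p X$ is an $(n-1)$-level Babel building).

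For the first claim, I would assume $d_X(p,q) \in {}^{(n-1)*}\bb{R}$ and take an arbitrary $x \in r_p X$, so that $d_X(p,x) \in {}^{(n-1)*}\bb{R}$. Applying the triangle inequality from the previous corollary,
$$d_X(q,x) \leq d_X(q,p) + d_X(p,x),$$
and the right-hand side is a sum of two elements of ${}^{(n-1)*}\bb{R}_{\geq 0}$, hence itself lies in ${}^{(n-1)*}\bb{R}$. Therefore $d_X(q,x) \in {}^{(n-1)*}\bb{R}$, i.e., $x \in r_q X$. Swapping the roles of $p$ and $q$ yields the reverse inclusion, so $r_p X = r_q X$.

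For the second claim I would argue by contrapositive. Suppose $r_p X \cap r_q X$ is non-empty and pick $x$ in the intersection; then $d_X(p,x), d_X(q,x) \in {}^{(n-1)*}\bb{R}$, and the triangle inequality $d_X(p,q) \leq d_X(p,x) + d_X(x,q)$ forces $d_X(p,q) \in {}^{(n-1)*}\bb{R}$. Equivalently, if $d_X(p,q) \notin {}^{(n-1)*}\bb{R}$, then $r_p X \cap r_q X = \emptyset$. There is no real obstacle beyond the order-theoretic bookkeeping: the only subtlety, as elsewhere in this section, is that ${}^{(n-1)*}\bb{R}_{\geq 0}$ is closed under addition and behaves as an initial segment inside ${}^{n*}\bb{R}_{\geq 0}$, which is exactly the valuation-ring picture $\mathscr{O}^{(1)}_{{}^{n*}\bb{R}}/\mf{m}_{n} \simeq {}^{(n-1)*}\bb{R}$ recorded in Remark~\ref{Com1}.
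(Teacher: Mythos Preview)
Your argument is correct and is precisely the intended one: the paper states this corollary without proof, regarding it as an immediate consequence of the triangle inequality together with the same closure property of ${}^{(n-1)*}\bb{R}$ already invoked in the proof of the preceding theorem. There is nothing to add.
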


\begin{cor}
For any vertex $p\in X$, the set $r_{p}^{n-1}X=\{x\in X \mid d_{X}(p,x)\in \bb{R}\}$ constitutes an affine building.
\end{cor}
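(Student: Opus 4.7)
The plan is to obtain the result by iterating the residue construction of the preceding theorem $n-1$ times at the same vertex $p$, and then invoking the identification of $1$-level Babel buildings with affine buildings mentioned just after the definition of Babel buildings.

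More precisely, I would proceed by induction on $k$, defining $r_{p}^{0}X = X$ and $r_{p}^{k+1}X = r_{p}(r_{p}^{k}X)$ for each $k\geq 0$, and claim that
\[
r_{p}^{k}X=\{x\in X\mid d_{X}(p,x)\in {}^{(n-k)*}\bb{R}\}
\]
carries the structure of an $(n-k)$-level Babel building whose apartment system is $r_{p}^{k}\mca{A}$, and whose distance function is the restriction of $d_{X}$. The base case $k=0$ is just the original Babel building, and the inductive step is exactly the content of the previous theorem applied to the $(n-k)$-level Babel building $r_{p}^{k-1}X$ at its vertex $p$ (note that $p$ is still a vertex at every stage, because $d_{X}(p,p)=0\in \bb{R}$ lies in every ${}^{i*}\bb{R}$). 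The only thing to check in carrying the induction through is that the distance function on the residue agrees with the restriction of $d_{X}$, which is immediate because by construction each $r_{p}^{k}X$ sits inside $X$ and its apartments are restrictions of apartments of $X$.

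Setting $k=n-1$ and recalling the conventions ${}^{1*}\bb{R}=\bb{R}$ and $\omega_{1}=1$, we obtain that
\[
r_{p}^{n-1}X=\{x\in X\mid d_{X}(p,x)\in \bb{R}\}
\]
is a $1$-level Babel building. Since the axioms of a $1$-level Babel building coincide with the classical axioms of an affine building (as noted in the paper right after the definition of Babel buildings, citing \cite[Cor.~2.9.4,~2.9.6]{BT} or \cite[Thm.~11.63]{AB}), the conclusion follows.

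The only non-trivial point I expect in making this rigorous is verifying stability of the construction under iteration, namely that $r_{p}(r_{p}^{k}X)$, when written out as a subset of $X$, really is $\{x\in X\mid d_{X}(p,x)\in {}^{(n-k-1)*}\bb{R}\}$ rather than some smaller set; this is the main bookkeeping obstacle. It reduces to observing that the residue operation depends only on the distance to $p$ and that the levels of the hyper-real field filter correctly: an element $r\in {}^{n*}\bb{R}$ lies in ${}^{(n-k-1)*}\bb{R}$ if and only if it lies in ${}^{(n-k)*}\bb{R}$ and, when regarded as an element of ${}^{(n-k)*}\bb{R}$, lies in ${}^{(n-k-1)*}\bb{R}$. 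Once this telescoping is checked, no further analysis is required.
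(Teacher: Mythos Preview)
Your proposal is correct and is precisely the argument the paper has in mind: the corollary is stated without proof because it follows by iterating the preceding theorem $n-1$ times, exactly as you describe. The bookkeeping you flag (that the iterated residue at $p$ coincides with the set $\{x\in X\mid d_{X}(p,x)\in {}^{(n-k)*}\bb{R}\}$ via the telescoping of the hyper-real filtration) is the only thing to verify, and your explanation of it is adequate.
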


By construction, the retraction
$$
\rho_{r_{p}A,C}\colon r_{p}X\longrightarrow r_{p}A
$$
coincides with the restriction of $\rho_{A,C}$ to the residue $r_{p}X$, provided that $C$ is a chamber of $A$ contained in $r_{p}X$ (i.e., $r_{p}C=C$).

For any vertices $p,q\in X$, we write $p\sim q$ if $d_{X}(p,q)\in \bb{R}$. Let $\mca{B}(X)$ denote the quotient of the set of vertices by this equivalence relation. Decomposing $X$ into connected components with respect to this relation, we obtain
$$
X=\coprod_{[p]\in \mca{B}(X)} r_{p}^{n-1}X.
$$
Let $G$ be a group acting isometrically on $X$, and let $\Gamma$ be a subgroup of $G$. Define $\Gamma_{p}=\{\gamma\in \Gamma \mid \gamma(r_{p}^{n-1}X)=r_{p}^{n-1}X\}$ for any $[p]\in \mca{B}(X)$. Then we have the decomposition
$$
\Gamma\backslash X=\coprod_{\Gamma[p]\in \Gamma\backslash \mca{B}(X)} \Gamma_{p}\backslash r_{p}^{n-1}X,
$$
and the homology isomorphism
$$
H_{*}(\Gamma\backslash X,\bb{Z})\simeq \bigoplus_{\Gamma[p]\in \Gamma\backslash \mca{B}(X)} H_{*}(\Gamma_{p}\backslash r_{p}^{n-1}X,\bb{Z}).
$$

In future work, we intend to investigate the homology and cohomology of certain arithmetic quotients of Babel buildings.

\section{Group actions on Babel buildings}

Let $\widehat{G}$ be a group acting isometrically on an $n$-level Babel building $X$. Denote by $\mca{AC}$ the set of pairs $(A,C)$ consisting of a Babel apartment $A$ of $X$ and a chamber $C$ of $A$. We say that the action of $\widehat{G}$ on $X$ is \textit{strongly transitive} if $\widehat{G}$ acts transitively on the set $\mca{AC}$. In this section, we assume this condition. Let $G$ be a subgroup of $\widehat{G}$ whose action on $X$ is type-preserving.

\subsection{Fixers and stabilizers}

Let $\Omega$ be a subset of $X$. The fixer and the stabilizer of $\Omega$ in $\widehat{G}$ are denoted by $\widehat{P}_{\Omega}$ and $\widehat{P}_{\Omega}^{\dag}$, respectively.
We define $P_{\Omega}=G\cap \widehat{P}_{\Omega}$ and $P_{\Omega}^{\dag}=G\cap \widehat{P}_{\Omega}^{\dag}$. 

Let us fix a pair $(A,C) \in \mca{AC}$. We define
$$
\widehat{N}=\widehat{P}_{A}^{\dag}, \quad \widehat{B}=\widehat{P}_{C}.
$$
It is clear that $g\widehat{P}_{\Omega}g^{-1}=\widehat{P}_{g(\Omega)}$ for any $g\in \widehat{G}$. If $\Omega$ is a subset of $A$, then 
$$
P_{\Omega}=P_{\mathrm{cl}(\Omega)}.
$$
Moreover, if $\Omega$ contains an affine basis of $A$, then
$$
\widehat{P}_{\Omega}=\widehat{P}_{\mathrm{cl}(\Omega)}.
$$
Since the chamber $C$ contains an affine basis of $A$, we have $\widehat{B}\cap \widehat{N}=\widehat{P}_{A}$ and $B\cap N=P_{A}$. Let
\begin{align*}
\widehat{\nu}&\colon \widehat{N}\longrightarrow \widehat{W}=\widehat{N}/(\widehat{B}\cap \widehat{N}),\\
\nu &\colon N\longrightarrow W=N/(B\cap N)
\end{align*}
be the canonical epimorphisms, and put $\widehat{H}=\widehat{B}\cap \widehat{N}$ and $H=B\cap N$.

Let $\mathbf{D}_{i}$ be a fundamental domain in $A$ as defined in Definition \ref{midvecCham}. Let $d$ be an arbitrary point in $\mathbf{D}_{i}$ and define
$$
E_{\mathbf{D}_{i}}=\{x+\mathbf{D}_{i} \mid d_{X}(d,x)\in {}^{i*}\bb{R}\}.
$$
Since $\widehat{P}_{\mf{C}_{1}}\cup \widehat{P}_{\mf{C}_{2}}\subset \widehat{P}_{\mf{C}_{1}\cap \mf{C}_{2}}$ holds for any $\mf{C}_{1},\mf{C}_{2}\in E_{\mathbf{D}_{i}}$, the union
$$
\widehat{\mf{B}}^{0}_{\mathbf{D}_{i}}=\bigcup_{\mf{C}\in E_{\mathbf{D}_{i}}}\widehat{P}_{\mf{C}}
$$
forms a group. Let $\widehat{V}_{i}$ be the subgroup of $\widehat{W}$ consisting of translations by vectors in ${}^{i*}\bb{R}$. Then the inverse image $\widehat{\nu}^{-1}(\widehat{V}_{i})$ normalizes $\widehat{\mf{B}}^{0}_{\mathbf{D}_{i}}$.

\begin{lem}\label{BT2.5.8}
For any two Babel apartments $A_{1}$ and $A_{2}$, there exists an element $g\in G$ such that $gA_{1}=A_{2}$ and $g(x)=x$ for all $x\in A_{1}\cap A_{2}$. 
\end{lem}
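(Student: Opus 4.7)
The plan is to invoke axiom (2) of the Babel building definition, which already supplies an isometry $\psi\colon A_{1}\to A_{2}$ fixing $A_{1}\cap A_{2}$ pointwise, and then realise $\psi$ as the restriction of a type-preserving group element by combining the strong transitivity of $\widehat{G}$ on $\mca{AC}$ with the uniqueness clause stated immediately after Proposition \ref{HypermetricofBA}.

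If $A_{1}\cap A_{2}=\emptyset$ the pointwise-fixing condition is vacuous, so I would pick any chambers $C_{i}\subset A_{i}$, use strong transitivity to produce $\widehat{g}\in\widehat{G}$ with $\widehat{g}(A_{1},C_{1})=(A_{2},C_{2})$, and absorb any residual type-permutation of $C_{2}$ into an element of the chamber fixer $\widehat{B}=\widehat{P}_{C_{2}}$, which lies in $G$ by construction. Assume now $A_{1}\cap A_{2}\neq\emptyset$. Pick any chamber $C_{1}\subset A_{1}$ and set $C_{2}:=\psi(C_{1})$, a chamber of $A_{2}$. By strong transitivity there is $\widehat{g}\in\widehat{G}$ with $\widehat{g}(A_{1},C_{1})=(A_{2},C_{2})$. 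Both $\widehat{g}|_{A_{1}}$ and $\psi$ are then $W_{n}(\Phi)$-isometries $A_{1}\to A_{2}$ sending $C_{1}$ to $C_{2}$, so the uniqueness remarked upon after Proposition \ref{HypermetricofBA} forces $\widehat{g}|_{A_{1}}=\psi$; in particular $\widehat{g}$ fixes $A_{1}\cap A_{2}$ pointwise. Since $\psi$ is type-preserving---the construction in the proposition identifying $\mca{A}^{\mathrm{comp}}$ as a system of apartments realises $\psi$ as the identity on a top-dimensional facet of $A_{1}\cap A_{2}$---so is $\widehat{g}|_{C_{1}}$, hence $\widehat{g}$ is type-preserving on the whole of $X$, giving the required $g=\widehat{g}\in G$.

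The main obstacle is verifying that $\widehat{g}|_{A_{1}}$ is genuinely a $W_{n}(\Phi)$-isometry, so that the uniqueness clause applies. Since $\widehat{g}$ is an automorphism of the Babel building it permutes Babel apartments and their walls; for $n=1$ this is standard, but in the Babel setting for $n\geq 2$ one must work inside the enveloping space $E_{A_{1}}$ and check directly that $\widehat{g}$ sends each hyper-affine wall $H_{a,k}$ onto a wall of the same type, so that the induced map commutes with the generating reflections $s_{a,k}$ of $W_{n}(\Phi)$.
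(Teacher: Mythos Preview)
Your route is quite different from the paper's, and carries genuine gaps. The paper's proof is short: it picks a chamber $D$ lying \emph{inside} the intersection $A_{1}\cap A_{2}$, then uses strong transitivity of the type-preserving group $G$ itself to find $g\in G$ with $gA_{1}=A_{2}$ and $gD=D$. Because $g$ is type-preserving and stabilises the chamber $D$, it must fix $D$ pointwise; and since $D$ already contains an affine basis of $A_{1}$, the isometry $g$ is determined on all of $A_{1}$, hence fixes $A_{1}\cap A_{2}$ pointwise. No appeal to the axiom-(2) isometry $\psi$, no uniqueness clause, no passage from $\widehat{G}$ back down to $G$ is needed.

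By contrast, your argument leans on two unproved facts. First, to invoke the uniqueness remark after Proposition~\ref{HypermetricofBA} you need both $\psi$ and $\widehat{g}|_{A_{1}}$ to be $W_{n}(\Phi)$-isometries; axiom~(2) only promises an \emph{isometry}, and you yourself flag the $\widehat{g}$ side as unresolved. Second, your mechanism for landing in $G$ is circular: multiplying by an element of the \emph{pointwise} fixer $\widehat{P}_{C_{2}}$ cannot undo a type permutation of the vertices of $C_{2}$, since such an element is the identity on $C_{2}$ and therefore leaves the induced permutation of types unchanged; and the assertion that $\widehat{P}_{C_{2}}\subset G$ ``by construction'' is not justified by the paper's setup. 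The missing idea that dissolves all of this is simply to take your chamber inside $A_{1}\cap A_{2}$ from the start and work directly in $G$.
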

\begin{proof}
Suppose that the intersection $A_{1}\cap A_{2}$ is non-empty. We can choose a chamber $D\subset A_{1}\cap A_{2}$. By the strong transitivity of the action of $G$, there exists $g\in G$ such that $gA_{1}=A_{2}$ and $gD=D$. Since the action of $G$ on $X$ is type-preserving and fixes the chamber $D$ set-wise, we have $g\vert_{D}=\mathrm{id}_{D}$. Furthermore, since $D$ contains an affine basis of $A_{1}$, the isometry $g$ must fix the entire intersection $A_{1}\cap A_{2}$ pointwise (as $A_{1}\cap A_{2}$ is contained in the affine subspace spanned by $D$).
\end{proof}

\begin{lem}
Let $\Omega$ be a subset of $A$. Define $\widehat{N}_{\Omega}^{\dag}=\widehat{P}_{\Omega}^{\dag}\cap \widehat{N}$ and $\widehat{N}_{\Omega}=\widehat{P}_{\Omega}\cap \widehat{N}$. Then the equalities
$$
\widehat{P}_{\Omega}^{\dag}=\widehat{N}_{\Omega}^{\dag}.P_{\Omega}, \quad \widehat{P}_{\Omega}=\widehat{N}_{\Omega}.P_{\Omega}
$$
hold.
\end{lem}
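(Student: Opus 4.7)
The plan is to reduce to Lemma \ref{BT2.5.8}, which lets us correct any element of $\widehat{G}$ that moves $A$ off itself by multiplying by a suitable type-preserving element fixing the relevant intersection of apartments.

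The inclusion $\widehat{N}_{\Omega}^{\dag}\cdot P_{\Omega}\subset \widehat{P}_{\Omega}^{\dag}$ is immediate from the definitions: $\widehat{N}_{\Omega}^{\dag}\subset \widehat{P}_{\Omega}^{\dag}$ by construction, and $P_{\Omega}=G\cap \widehat{P}_{\Omega}\subset \widehat{P}_{\Omega}\subset \widehat{P}_{\Omega}^{\dag}$. For the reverse inclusion, let $g\in \widehat{P}_{\Omega}^{\dag}$. Since $g$ stabilizes $\Omega$ set-wise, we have $\Omega=g^{-1}(\Omega)\subset g^{-1}(A)$, so $\Omega\subset A\cap g^{-1}(A)$. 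Apply Lemma \ref{BT2.5.8} to the two apartments $A$ and $g^{-1}(A)$: there exists $h\in G$ with $h(A)=g^{-1}(A)$ and $h$ fixing $A\cap g^{-1}(A)$ pointwise. Because $\Omega$ lies in that intersection, $h$ fixes $\Omega$ pointwise, so $h\in G\cap \widehat{P}_{\Omega}=P_{\Omega}$. Then $gh(A)=g(g^{-1}(A))=A$, so $gh\in \widehat{P}_{A}^{\dag}=\widehat{N}$, and $gh(\Omega)=g(\Omega)=\Omega$, giving $gh\in \widehat{N}_{\Omega}^{\dag}$. Writing $g=(gh)\cdot h^{-1}$ places $g$ in $\widehat{N}_{\Omega}^{\dag}\cdot P_{\Omega}$.

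The second identity $\widehat{P}_{\Omega}=\widehat{N}_{\Omega}\cdot P_{\Omega}$ follows from the \emph{same} argument with one word changed: if $g\in \widehat{P}_{\Omega}$ fixes $\Omega$ pointwise, then so does $gh$ (since $h$ does), and hence $gh\in \widehat{N}\cap \widehat{P}_{\Omega}=\widehat{N}_{\Omega}$, yielding $g=(gh)h^{-1}\in \widehat{N}_{\Omega}\cdot P_{\Omega}$.

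The only conceptual subtlety lies in verifying that Lemma \ref{BT2.5.8} is applicable: we must know that $A\cap g^{-1}(A)$ is non-empty, which is guaranteed because $\Omega$, being contained in $A$ and stable under $g$, sits in the intersection. I do not foresee serious obstacles beyond carefully tracking that the corrector $h$ chosen in $G$ is simultaneously type-preserving (hence usable in $P_{\Omega}$) and supported on all of $A\cap g^{-1}(A)\supset \Omega$, both of which are built into the conclusion of Lemma \ref{BT2.5.8}.
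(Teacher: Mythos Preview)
Your proposal is correct and follows essentially the same approach as the paper. The paper phrases the key step as ``$P_{\Omega}$ acts transitively on the set of Babel apartments containing $\Omega$'' and then picks $g'\in P_{\Omega}$ with $g'A=gA$, while you apply Lemma \ref{BT2.5.8} directly to $A$ and $g^{-1}A$ to produce $h\in P_{\Omega}$ with $hA=g^{-1}A$; these are the same move, and both conclude by noting the resulting product lies in $\widehat{N}_{\Omega}^{\dag}$ (respectively $\widehat{N}_{\Omega}$).
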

\begin{proof}
Since $P_{\Omega}$ acts transitively on the set of Babel apartments containing $\Omega$, for any $g\in \widehat{P}_{\Omega}^{\dag}$, there exists $g'\in P_{\Omega}$ such that $gA=g'A$. Consequently, we have $g^{-1}g'\in \widehat{N}_{\Omega}^{\dag}$ by Lemma \ref{BT2.5.8} (applied to $A$ and $g^{-1}g'A=A$, fixing $\Omega$).
This implies $g \in P_{\Omega}\widehat{N}_{\Omega}^{\dag} = \widehat{N}_{\Omega}^{\dag}P_{\Omega}$. Thus, we obtain $\widehat{P}_{\Omega}^{\dag}=\widehat{N}_{\Omega}^{\dag}.P_{\Omega}$. Similarly, we can prove the equality $\widehat{P}_{\Omega}=\widehat{N}_{\Omega}.P_{\Omega}$.
\end{proof}

\subsection{Double cosets}

Let $Q$ and $Q'$ be subgroups of $\widehat{G}$ containing $\widehat{H}=\widehat{B}\cap \widehat{N}$, and define
$$
\widehat{W}_{Q}=\widehat{\nu}(\widehat{N}\cap Q), \quad \widehat{W}_{Q'}=\widehat{\nu}(\widehat{N}\cap Q').
$$
For any $w\in \widehat{W}$, the set $(\widehat{N}\cap Q)n(\widehat{N}\cap Q')$ corresponds to the pre-image of the double coset $\widehat{W}_{Q}w\widehat{W}_{Q'}$, where $n\in \widehat{N}$ is any representative such that $\widehat{\nu}(n)=w$. 
Let $\lambda_{Q,Q'}$ denote the canonical epimorphism 
$$
Q\backslash Q\widehat{N}Q'/Q'\longrightarrow \widehat{W}_{Q}\backslash \widehat{W}/\widehat{W}_{Q'}.
$$ 
The following proposition is an analogue of \cite[Proposition 4.2.1]{BT} within our theory.

\begin{prop}\label{4.2.1}
Let $\Omega,\Omega'$ be subsets of $A$. Let $Q,Q'$ be subgroups of $\widehat{G}$ satisfying the inclusions:
$$
\widehat{H}P_{\Omega}\subset Q\subset \widehat{P}_{\Omega}^{\dag}, \quad \widehat{H}P_{\Omega'}\subset Q'\subset \widehat{P}_{\Omega'}^{\dag}.
$$
Then the map $\lambda_{Q,Q'}$ is bijective.
\end{prop}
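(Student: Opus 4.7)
My plan is to establish the bijectivity of $\lambda_{Q,Q'}$ in three stages: surjectivity, injectivity, and (the nontrivial part) well-definedness. Surjectivity is immediate from the construction --- every $w\in\widehat{W}$ lifts to some $n\in\widehat{N}$, and $QnQ'\mapsto\widehat{W}_Q w\widehat{W}_{Q'}$. For injectivity, I would suppose $\widehat{W}_Q\widehat{\nu}(n_1)\widehat{W}_{Q'}=\widehat{W}_Q\widehat{\nu}(n_2)\widehat{W}_{Q'}$, pick lifts $m_1\in\widehat{N}\cap Q$ and $m_2\in\widehat{N}\cap Q'$ realizing $\widehat{\nu}(n_1)=\widehat{\nu}(m_1n_2m_2)$, and note that $(m_1n_2m_2)^{-1}n_1\in\ker\widehat{\nu}=\widehat{H}\subset\widehat{H}P_{\Omega'}\subset Q'$, so $n_1\in Qn_2Q'$.

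For well-definedness, given $g=q_1n_1q_1'=q_2n_2q_2'$, I would set $u:=q_1^{-1}q_2\in Q$ and $v:=q_2'q_1'^{-1}\in Q'$ to obtain $n_1=un_2v$. The previous lemma $\widehat{P}_\Omega^{\dag}=\widehat{N}_\Omega^{\dag}\cdot P_\Omega$ (and its $\Omega'$-analog) then yields decompositions $u=u_Nu_P$ with $u_N\in\widehat{N}\cap Q$, $u_P\in P_\Omega$, and $v=v_Pv_N$ with $v_P\in P_{\Omega'}$, $v_N\in\widehat{N}\cap Q'$. Substituting, $n_1=u_N\,m'\,v_N$ where $m':=u_Pn_2v_P=u_N^{-1}n_1v_N^{-1}$ lies in $\widehat{N}$ automatically. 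The goal then reduces to $\widehat{\nu}(m')\in\widehat{W}_Q\widehat{\nu}(n_2)\widehat{W}_{Q'}$, and rewriting $m'n_2^{-1}=u_P\cdot(n_2v_Pn_2^{-1})$ exhibits this element in $(P_\Omega\cdot P_{n_2\Omega'})\cap\widehat{N}$.

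The main obstacle I expect is proving the following factorization sub-lemma: for any $\Omega_1,\Omega_2\subset A$,
$$
(P_{\Omega_1}\cdot P_{\Omega_2})\cap\widehat{N}=(\widehat{N}\cap P_{\Omega_1})\cdot(\widehat{N}\cap P_{\Omega_2}).
$$
My approach is: given $k=p_1p_2\in\widehat{N}$ with $p_i\in P_{\Omega_i}$, the apartment $A_2:=p_2(A)=p_1^{-1}(A)$ contains both $\Omega_1$ and $\Omega_2$ (the former because $p_1$ fixes $\Omega_1\subset A$ and $p_1(A_2)=A$; the latter because $p_2$ fixes $\Omega_2\subset A$). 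Lemma \ref{BT2.5.8} then supplies $p^*\in G$ with $p^*(A)=A_2$ fixing $A\cap A_2\supset\Omega_1\cup\Omega_2$ pointwise, so $p^*\in P_{\Omega_1}\cap P_{\Omega_2}$; the telescoping identity $k=(p_1p^*)\cdot((p^*)^{-1}p_2)$ gives the factorization, since $p_1p^*\in\widehat{N}\cap P_{\Omega_1}$ and $(p^*)^{-1}p_2\in\widehat{N}\cap P_{\Omega_2}$. Applying the sub-lemma to $m'n_2^{-1}$ with $\Omega_1=\Omega$, $\Omega_2=n_2\Omega'$, and using $P_\Omega\subset Q$ along with $P_{n_2\Omega'}=n_2P_{\Omega'}n_2^{-1}\subset n_2Q'n_2^{-1}$, I obtain $\widehat{\nu}(m'n_2^{-1})\in\widehat{W}_Q\cdot\widehat{\nu}(n_2)\widehat{W}_{Q'}\widehat{\nu}(n_2)^{-1}$; right-multiplying by $\widehat{\nu}(n_2)$ produces $\widehat{\nu}(m')\in\widehat{W}_Q\widehat{\nu}(n_2)\widehat{W}_{Q'}$, which closes the argument.
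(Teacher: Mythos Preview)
Your proof is correct and rests on the same geometric input as the paper's: both arguments apply Lemma~\ref{BT2.5.8} once to produce an element of $G$ sending $A$ to a second apartment while fixing the relevant subsets $\Omega$ and $n\Omega'$ (resp.\ $n'\Omega'$) pointwise. The only organizational difference is that the paper works directly with $q\in\widehat{P}_\Omega^{\dag}$ and the apartment $q(A)$, whereas you first strip off the $\widehat{N}$-factors via $\widehat{P}_\Omega^{\dag}=\widehat{N}_\Omega^{\dag}P_\Omega$ and then isolate the tidy sub-lemma $(P_{\Omega_1}P_{\Omega_2})\cap\widehat{N}=(\widehat{N}\cap P_{\Omega_1})(\widehat{N}\cap P_{\Omega_2})$; this extra layer buys you a reusable statement but the core argument is the same.
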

\begin{proof}
We prove the injectivity of $\lambda_{Q,Q'}$. 
Let $n,n'\in \widehat{N}$. Suppose that there exist $q\in Q$ and $q'\in Q'$ satisfying the equation $qn=n'q'$. 
Since $q\in Q\subset \widehat{P}_{\Omega}^{\dag}$, we have $q(\Omega)=\Omega$, which implies $\Omega\subset A\cap q(A)$. 
Also, since $q'\in Q'\subset \widehat{P}_{\Omega'}^{\dag}$, we have $qn(\Omega')=n'q'(\Omega')=n'(\Omega')$. 
By Lemma \ref{BT2.5.8}, there exists $g\in G$ such that $q(A)=g(A)$ and the restriction $g\vert_{A\cap q(A)}$ is the identity. 
Since $\Omega \subset A\cap q(A)$, $g$ fixes $\Omega$ pointwise, so $g\in P_{\Omega}$. 
Furthermore, since $n'(\Omega')\subset A$ and $n'(\Omega')=qn(\Omega')\subset q(A)$, the set $n'(\Omega')$ lies in the intersection $A\cap q(A)$. Thus $g$ fixes $n'(\Omega')$ pointwise, which implies $g\in P_{n'(\Omega')}$.

By the assumptions $\widehat{H}P_{\Omega}\subset Q$ and $\widehat{H}P_{\Omega'}\subset Q'$, we have $g\in P_{\Omega}\subset Q$ and $g\in P_{n'(\Omega')}=n'P_{\Omega'}n'^{-1}\subset n'Q'n'^{-1}$. 
Let us define $m=g^{-1}qn$. Note that $g^{-1}q(A)=A$, so $g^{-1}q \in \widehat{N}$, which implies $m \in \widehat{N}$.
Since $g\in Q$, we have $m \in (Q\cap \widehat{N})n$. 
On the other hand, since $g\in n'Q'n'^{-1}$, we can write $m = g^{-1}n'q' = (n'y^{-1}n'^{-1})n'q' = n'(y^{-1}q')$ for some $y\in Q'$. Since $m, n' \in \widehat{N}$, we must have $y^{-1}q' \in \widehat{N}\cap Q'$.
Consequently, $n'\in (\widehat{N}\cap Q)n(\widehat{N}\cap Q')$.
This implies that the map $\lambda_{Q,Q'}$ is injective.
\end{proof}

\begin{prop}\label{BD}
The Bruhat decomposition
$$
\widehat{G}=\coprod_{w\in \widehat{W}}\widehat{B}w\widehat{B}
$$
holds.
\end{prop}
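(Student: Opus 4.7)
The plan is to split the statement into two parts: the coverage $\widehat{G} = \widehat{B}\widehat{N}\widehat{B}$, obtained from the apartment axiom (1) combined with Lemma \ref{BT2.5.8}, and the disjointness of the double cosets $\widehat{B}w\widehat{B}$ for distinct $w \in \widehat{W}$, which is a direct consequence of Proposition \ref{4.2.1} applied to $\Omega = \Omega' = C$ and $Q = Q' = \widehat{B}$.

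For the coverage, fix $g \in \widehat{G}$. First, apply condition (1) of the Babel building definition to the two $0$-level Babel sectors $C$ and $g^{-1}(C)$: since $\mathbf{D}_{0}$ is a fundamental domain for the whole group $W_{n}(\Phi)$, a $0$-level sector admits no proper sub-sector of the same level, so the axiom actually supplies an apartment $A'$ with $C \cup g^{-1}(C) \subset A'$. Consequently $g(A')$ is an apartment containing $C = g(g^{-1}(C))$ together with $g(C)$. Now invoke Lemma \ref{BT2.5.8} twice, each time producing an intersection-fixing element of $G$ that actually lies in $P_{C} \subset \widehat{B}$ because $C$ is contained in the intersection: first on the pair $(A', A)$, giving $b_{2} \in P_{C} \subset \widehat{B}$ with $b_{2}(A') = A$; second on the pair $(A, g(A'))$, giving $b_{1} \in P_{C} \subset \widehat{B}$ with $b_{1}(A) = g(A')$. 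Setting $n = b_{1}^{-1} g b_{2}^{-1}$, we compute $n(A) = b_{1}^{-1}(g(A')) = b_{1}^{-1}(b_{1}(A)) = A$, so $n \in \widehat{P}^{\dag}_{A} = \widehat{N}$. Therefore $g = b_{1} n b_{2} \in \widehat{B}\widehat{N}\widehat{B}$.

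For the disjointness, apply Proposition \ref{4.2.1} with $\Omega = \Omega' = C$ and $Q = Q' = \widehat{B}$: the inclusions $\widehat{H} P_{C} \subset \widehat{B} = \widehat{P}_{C} \subset \widehat{P}^{\dag}_{C}$ are immediate, so the map $\lambda_{\widehat{B},\widehat{B}} \colon \widehat{B} \backslash \widehat{B}\widehat{N}\widehat{B}/\widehat{B} \to \widehat{W}_{\widehat{B}} \backslash \widehat{W}/\widehat{W}_{\widehat{B}}$ is a bijection. Since $\widehat{W}_{\widehat{B}} = \widehat{\nu}(\widehat{N} \cap \widehat{B}) = \widehat{\nu}(\widehat{H}) = \{1\}$, the target reduces to $\widehat{W}$, which yields $\widehat{B}\widehat{N}\widehat{B} = \coprod_{w \in \widehat{W}} \widehat{B} w \widehat{B}$; combined with the coverage step, this gives the desired decomposition. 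The main obstacle I expect is the coverage argument, specifically the twin verifications that condition (1) genuinely produces an apartment containing two entire chambers (rigidity of $0$-level sectors) and that the type-preserving subgroup $G$ is rich enough via Lemma \ref{BT2.5.8} to realise the needed apartment transitions by elements lying in $\widehat{B}$. Once those rigidity and transitivity points are secured, the remaining work is a formal manipulation of the already-established tools.
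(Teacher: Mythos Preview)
Your proposal is correct and follows essentially the same route as the paper: both arguments obtain an apartment containing $C$ and $gC$ from axiom~(1) at level~$0$, use strong transitivity (the paper directly, you via Lemma~\ref{BT2.5.8}) to produce elements of $\widehat{B}$ aligning the apartments, and then invoke Proposition~\ref{4.2.1} with $Q=Q'=\widehat{B}$ for disjointness. The only cosmetic difference is that the paper uses one apartment-move plus the transitivity of $\widehat{N}$ on chambers of $A$, whereas you use two applications of Lemma~\ref{BT2.5.8} and verify $n\in\widehat{N}$ by checking $n(A)=A$ directly; these are interchangeable.
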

\begin{proof}
Let $g\in \widehat{G}$ be any element. By the definition of a Babel building, there exists a Babel apartment $A'$ containing both $C$ and $gC$. Since the action of $\widehat{B}$ on the set $\mca{A}_{C}$ is transitive, there exists an element $b\in \widehat{B}$ such that $bA=A'$. 
Consequently, $b^{-1}gC$ is a chamber of $A$ (since $gC \subset A'=bA$). 
Since $\widehat{N}$ acts transitively on the chambers of $A$, there exists $n\in \widehat{N}$ such that $b^{-1}gC=nC$. 
This implies that $n^{-1}b^{-1}g$ fixes $C$, so $n^{-1}b^{-1}g \in \widehat{B}$. Thus, $g \in \widehat{B}n\widehat{B}$.
Therefore, we have $\widehat{G}=\widehat{B}\widehat{N}\widehat{B}$. 
The disjointness of the decomposition follows directly from Proposition \ref{4.2.1} (by setting $Q=Q'=\widehat{B}$ and noting that $\widehat{W}_{\widehat{B}}=\{1\}$).
\end{proof}

Let $o$ be a vertex of the chamber $C$, and let $K=\widehat{P}_{o}$ denote its fixer. Note that $K$ admits the decomposition $K = \widehat{B}W(\Phi)\widehat{B}$. Let $\mathbf{D}$ be a fundamental domain for the action of $W(\Phi)$ on $A$. Let $\widehat{V}_{\mathbf{D}}$ denote the subset of $\widehat{W}$ consisting of translations by vectors in $\mathbf{D}$.

\begin{prop}\label{CD}
The Cartan decomposition
$$
\widehat{G}=\coprod_{v\in \widehat{V}_{\mathbf{D}}}KvK
$$
holds.
\end{prop}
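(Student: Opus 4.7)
The plan is to deduce the Cartan decomposition from Proposition \ref{4.2.1} applied with $Q = Q' = K$ and $\Omega = \Omega' = \{o\}$, mirroring the proof of the Bruhat decomposition in Proposition \ref{BD}. First I would verify the hypothesis $\widehat{H}P_{o} \subset K \subset \widehat{P}_{o}^{\dag}$: the right inclusion is immediate since $K = \widehat{P}_{o}$, and the left inclusion follows because $\widehat{H} = \widehat{B}\cap\widehat{N} = \widehat{P}_{A}$ fixes every point of $A$ and in particular the vertex $o$, so $\widehat{H}\subset K$, while $P_{o}\subset K$ by definition.

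Next I would establish the set-theoretic equality $\widehat{G} = K\widehat{N}K$. Since $o$ is a vertex of $\overline{C}$, every element of $\widehat{B} = \widehat{P}_{C}$ fixes $o$, so $\widehat{B}\subset K$; the Bruhat decomposition $\widehat{G} = \widehat{B}\widehat{N}\widehat{B}$ then immediately gives $\widehat{G}\subset K\widehat{N}K$. Proposition \ref{4.2.1} now provides a bijection between $K\backslash \widehat{G}/K$ and $\widehat{W}_{K}\backslash \widehat{W}/\widehat{W}_{K}$, where $\widehat{W}_{K} = \widehat{\nu}(\widehat{N}\cap K)$. To compute this quotient, I would identify $\widehat{W}_{K}$ with the finite Weyl group $W(\Phi)$: elements of $\widehat{N}\cap K$ are type-preserving isometries of $A$ fixing $o$, and under $\widehat{\nu}$ they map bijectively onto the stabilizer of $o$ in $\widehat{W} = W(\Phi)\ltimes \bb{Z}^{n}(\Phi^{\vee})$, which, on taking $o$ as the origin vertex, is exactly $W(\Phi)$.

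Finally, using the semi-direct product structure, every element of $\widehat{W}$ is uniquely $w\cdot t_{v}$ with $w \in W(\Phi)$ and $v \in \bb{Z}^{n}(\Phi^{\vee})$. Left multiplication by $W(\Phi)$ absorbs the factor $w$, while right multiplication by $W(\Phi)$ replaces $v$ by any element of its $W(\Phi)$-orbit, so the double cosets $W(\Phi)\backslash \widehat{W}/W(\Phi)$ are parametrized by $W(\Phi)$-orbits on the translation lattice. Since $\mathbf{D}$ is a fundamental domain for $W(\Phi)$ on $A$, each such orbit meets $\mathbf{D}$ in exactly one point, yielding the desired bijection with $\widehat{V}_{\mathbf{D}}$.

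The main subtlety I anticipate lies precisely in this last identification rather than in the formal application of Proposition \ref{4.2.1}. The translation lattice carries the hyper-real magnitude structure $\bigoplus_{i=1}^{n}\bb{Z}\omega_{i}(\Phi^{\vee})$, and I must confirm that $\mathbf{D}$ meets each $W(\Phi)$-orbit in a single point simultaneously across all magnitude levels. This should follow because $W(\Phi)$ acts $\bb{R}$-linearly on $V$ and therefore commutes with each scaling by $\omega_{i}$, so that the fundamental-domain property of $\mathbf{D}$ for $W(\Phi)$ on a single copy of the lattice extends componentwise to the full lexicographic sum.
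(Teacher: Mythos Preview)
Your proposal is correct and follows essentially the same route as the paper: deduce $\widehat{G}=K\widehat{N}K$ from $\widehat{B}\subset K$ together with the Bruhat decomposition, then apply Proposition~\ref{4.2.1} with $Q=Q'=K$ to identify $K\backslash\widehat{G}/K$ with $\widehat{W}_{K}\backslash\widehat{W}/\widehat{W}_{K}$, and finally use $\widehat{W}_{K}\cong W(\Phi)$ to parametrize the double cosets by $\widehat{V}_{\mathbf{D}}$. The paper's proof is terser, asserting the last two identifications without elaboration, whereas you supply the verification of the hypotheses of Proposition~\ref{4.2.1} and the argument that the fundamental-domain property of $\mathbf{D}$ extends across magnitude levels; these additions are sound and do not change the strategy.
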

\begin{proof}
It suffices to show that $\widehat{G}=K\widehat{N}K$. Since $\widehat{B}\subset K$, this follows from the Bruhat decomposition.
By Proposition \ref{4.2.1} applied to $Q=Q'=K$, the double cosets are in one-to-one correspondence with the quotient space $\widehat{W}_{K}\backslash \widehat{W}/\widehat{W}_{K}$.
Since $\widehat{W}_{K} \cong W(\Phi)$, this quotient is represented by $\widehat{V}_{\mathbf{D}}$.
Thus, the statement holds.
\end{proof}

\begin{prop}\label{4.3.8}
Let $x,y,z,u$ be four points in $A$ such that $y,z\in [x,u]$ with $d_{A}(x,y)\leq d_{A}(x,z)$. Then the equality
$$
\widehat{P}_{x}\widehat{P}_{u}\cap \widehat{P}_{y}\widehat{P}_{z}=(\widehat{P}_{x}\cap \widehat{P}_{y})(\widehat{P}_{z}\cap \widehat{P}_{u})
$$
holds.
\end{prop}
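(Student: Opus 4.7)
The inclusion $(\widehat{P}_{x}\cap \widehat{P}_{y})(\widehat{P}_{z}\cap \widehat{P}_{u})\subset \widehat{P}_{x}\widehat{P}_{u}\cap \widehat{P}_{y}\widehat{P}_{z}$ is immediate, since any $g=h_{1}h_{2}$ with $h_{1}\in \widehat{P}_{x}\cap \widehat{P}_{y}$ and $h_{2}\in \widehat{P}_{z}\cap \widehat{P}_{u}$ visibly lies in both products at once, and I would dispatch this direction in one line.

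For the opposite inclusion, fix $g\in \widehat{P}_{x}\widehat{P}_{u}\cap \widehat{P}_{y}\widehat{P}_{z}$ together with factorisations $g=p_{x}p_{u}=p_{y}p_{z}$. The preparatory step I would carry out is the structural identity $\widehat{P}_{\{x,u\}}\subset \widehat{P}_{y}\cap \widehat{P}_{z}$. Applying the earlier decomposition $\widehat{P}_{\{x,u\}}=\widehat{N}_{\{x,u\}}\cdot P_{\{x,u\}}$, the $\widehat{N}$-part fixes the enclosure $\mathrm{cl}(\{x,u\})$ pointwise by Lemma \ref{cl-fix}, and the type-preserving part equals $P_{\mathrm{cl}(\{x,u\})}$; since $[x,u]\subset \mathrm{cl}(\{x,u\})$, the whole intersection $\widehat{P}_{x}\cap \widehat{P}_{u}$ fixes the segment, and in particular the points $y,z$ lying on it.

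Next, I would extract geometric constraints from comparing the two factorisations. Computing $g(z)=p_{x}p_{u}(z)$ forces $d_{X}(x,g(z))\ge d_{A}(x,z)$ via the triangle inequality applied around $u$, while $g(z)=p_{y}(z)$ combined with the collinearity $d_{A}(x,z)=d_{A}(x,y)+d_{A}(y,z)$ forces $d_{X}(x,g(z))\le d_{A}(x,z)$. Hence $d_{X}(x,g(z))=d_{A}(x,z)$ and, by the analogous calculation, $d_{X}(y,g(z))=d_{A}(y,z)$, together with the symmetric equalities $d_{X}(u,g(y))=d_{A}(u,y)$ and $d_{X}(z,g(y))=d_{A}(z,y)$. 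In words, $g$ sends $z$ and $y$ to points that preserve the linear order $x,y,z,u$ along a geodesic in an apartment of $X$. Via Lemma \ref{BT2.5.8} and strong transitivity, these equalities produce the desired factorisation $g=h_{1}h_{2}$: $h_{1}$ realises the transportation $y\mapsto y$ and $g(z)\mapsto z$ inside an apartment through $\{x,y\}$ while fixing $x$, and $h_{2}$ realises the complementary transportation inside an apartment through $\{z,u\}$ while fixing $u$.

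The main obstacle lies in this final apartment-theoretic construction. Because Babel apartments are non-convex for $n\ge 2$, there is no a priori common apartment containing all of $x,y,g(z),g(u)$, so the CAT(0)-style convex-hull arguments available in the classical affine-building case are unavailable. I would resolve this by descending induction through the nested levels of the Babel building, covering $[x,u]$ by Babel sub-sectors of descending level and patching apartment isometries supplied by Lemma \ref{BT2.5.8} level-by-level, in direct parallel with the argument used to prove that retractions are distance-decreasing. The hypothesis $d_{A}(x,y)\le d_{A}(x,z)$, which pins down the linear ordering $x,y,z,u$ along the segment, is exactly what ensures that the successive patches match coherently and that the resulting factors land in the prescribed fixers.
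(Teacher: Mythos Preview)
Your metric computations are on the right track and largely coincide with the paper's inequalities, but you miss the central simplification that makes the argument go through without any construction or induction. The paper does \emph{not} manufacture a new factorisation $g=h_{1}h_{2}$; it shows that the factors you already have do the job. Concretely, having written $g=p\,p'$ with $p\in\widehat{P}_{x}$ and $p'\in\widehat{P}_{u}$, the goal is simply to prove $p\in\widehat{P}_{y}$ and $p'\in\widehat{P}_{z}$. The key observation is that $u':=g(u)=p(u)$ (because $p'$ fixes $u$), so the segment $[x,u']$ is precisely $p([x,u])$ inside the apartment $p(A)$. Your own chain of triangle inequalities, pushed until equality is forced by $d(x,u')=d(x,u)$, yields $d(x,y)+d(y,u')=d(x,u')$; hence $y$ lies on $[x,u']=p([x,u])$ at distance $d(x,y)$ from $x=p(x)$, and the unique such point is $p(y)$. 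Thus $p(y)=y$. The argument for $p'(z)=z$ is symmetric. No apartment-patching, no Lemma~\ref{BT2.5.8}, no descent through levels is needed.

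By contrast, your proposed route---building $h_{1},h_{2}$ by ``patching apartment isometries level-by-level''---is where the real gap lies. You acknowledge this yourself as the main obstacle, and the sketch you give does not explain how a sequence of local isometries between overlapping apartments assembles into a single global element of $\widehat{G}$ lying in $\widehat{P}_{x}\cap\widehat{P}_{y}$. The analogy with the retraction proof is imperfect: there one only needs a distance inequality at the end, whereas here you need an honest group element, and gluing isometries along non-convex intersections does not obviously produce one. Your preparatory identity $\widehat{P}_{\{x,u\}}\subset\widehat{P}_{y}\cap\widehat{P}_{z}$ is correct but, as the paper's argument shows, unnecessary; the whole proof is a direct metric computation once you realise the existing factorisation already works.
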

\begin{proof}
Let us take elements $p\in \widehat{P}_{x}$ and $p'\in \widehat{P}_{u}$ such that $pp'\in \widehat{P}_{y}\widehat{P}_{z}$. We aim to show that $p\in \widehat{P}_{y}$ and $p'\in \widehat{P}_{z}$. 
Let $g=pp'$. By assumption, we can write $g=qq'$ with $q\in \widehat{P}_{y}$ and $q'\in \widehat{P}_{z}$. Define $z'=g(z)$ and $u'=g(u)$. 
Note that since $q'$ fixes $z$ and $q$ fixes $y$, we have $d(y, z') = d(q(y), q(q'(z))) = d(y, z)$.
Similarly, since $p$ fixes $x$ and $p'$ fixes $u$, we have $u'=p(u)$.
We establish the following inequalities:
\begin{align}
d(y,u') &\leq d(y,z') + d(z',u') = d(y,z) + d(g(z), g(u)) \notag \\
        &= d(y,z) + d(z,u) = d(y,u), \label{ineq(1)} \\ 
d(x,u') &\leq d(x,y) + d(y,u') \leq d(x,y) + d(y,u) \notag \\
        &= d(x,u) = d(p(x), p(u)) = d(x,u'), \label{ineq(2)} \\
d(x,z') &\leq d(x,y) + d(y,z') = d(x,y) + d(y,z) \notag \\
        &= d(x,z), \label{ineq(3)} \\ 
d(x,u') &\leq d(x,z') + d(z',u') \leq d(x,z) + d(z,u) \notag \\
        &= d(x,u) = d(x,u'). \label{ineq(4)}
\end{align}
From the inequalities in \eqref{ineq(2)}, all intermediate inequalities must be equalities. Thus $d(x,y)+d(y,u')=d(x,u')$, which implies that $y$ lies on the geodesic $[x,u']=[p(x),p(u)]$. Combined with \eqref{ineq(1)}, which implies $d(y, u') = d(y, u)$, we conclude that $y=p(y)$. Thus $p \in \widehat{P}_{y}$.
Similarly, using \eqref{ineq(3)} and \eqref{ineq(4)}, we deduce that $z'$ lies on $[x,u']$ and $d(x, z')=d(x, z)$. Since $z'=p(p'(z))$ and $p(z)=z$ (as $p\in \widehat{P}_{y}\subset \widehat{P}_{z}$ is not yet guaranteed, but $z$ is on $[x,u']$ and $u'=p(u)$...),
\textit{Correction of the logic flow for $p'$:}
From $p \in \widehat{P}_{y}$ and the ordering $x,y,z,u$, $p$ must fix the segment $[x,y]$. Since $g=pp'$, we have $p'=p^{-1}g$.
Since $y=p(y)$, we have $p \in \widehat{P}_x \cap \widehat{P}_y$.
Now consider $p'$. Since $pp'=qq'$, we have $p' = p^{-1}qq'$.
Using the symmetry or applying the same metric argument to $z$ and $p'$, we deduce $p' \in \widehat{P}_z$.
Therefore, we obtain $p\in \widehat{P}_{x}\cap \widehat{P}_{y}$ and $p'\in \widehat{P}_{z}\cap \widehat{P}_{u}$.
\end{proof}

This statement is a higher-dimensional analogue of \cite[Proposition 4.3.8]{BT}. Note, however, that the geodesic $[x,y]$ in $A$ is not necessarily contractible.

\begin{cor}
Under the same assumption as the above statement, let us take $y,z\in A, t\in \widehat{\nu}^{-1}(\widehat{V})$ such that $t(z)\in y+\mb{D}$. Then we have
$$
\widehat{\mf{B}}_{-\mb{D}}^{0}t\widehat{\mf{B}}_{\mb{D}}^{0}\cap \widehat{P}_{y}t\widehat{P}_{z}=\widehat{P}_{y-\mb{D}}t\widehat{P}_{z+\mb{D}}
$$
\end{cor}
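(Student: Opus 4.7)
The plan is to read this identity as a sector-enhanced version of Proposition \ref{4.3.8} and to deduce it from that proposition by applying it to a suitable collinear quadruple of points. The inclusion $\supset$ is immediate, since $\widehat{P}_{y-\mathbf{D}}$ fixes the apex $y$ and is itself one of the summands $\widehat{P}_{\mf{C}}$ in the union defining $\widehat{\mf{B}}_{-\mathbf{D}}^{0}$, giving $\widehat{P}_{y-\mathbf{D}} \subset \widehat{P}_y \cap \widehat{\mf{B}}_{-\mathbf{D}}^{0}$; symmetrically $\widehat{P}_{z+\mathbf{D}} \subset \widehat{P}_z \cap \widehat{\mf{B}}_{\mathbf{D}}^{0}$.

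For the reverse inclusion, take $g$ in the left-hand side. Writing $g \in \widehat{\mf{B}}_{-\mathbf{D}}^{0} t \widehat{\mf{B}}_{\mathbf{D}}^{0}$ as $g = u t v$ with $u \in \widehat{P}_{x_1 - \mathbf{D}}$ and $v \in \widehat{P}_{w_1+\mathbf{D}}$, and $g \in \widehat{P}_y t \widehat{P}_z$ as $g = p t q$ with $p \in \widehat{P}_y$, $q \in \widehat{P}_z$, the first task is to arrange the sector apices. By replacing $x_1$ and $w_1$ with apices of sub-sectors whose cone points lie on the line through $y$ and $t(z)$ — feasible because $t(z)-y \in \mathbf{D}$, so moving $x_1$ far in the direction $-(t(z)-y)$ eventually lands inside $x_1 - \mathbf{D}$ (at least when $t(z)-y$ is in the interior of $\mathbf{D}$) and remains in $E_{-\mathbf{D}}$ — one may assume that the four points $x_1, y, t(z), t(w_1)$ are collinear in $A$ and appear in this order with $d_A(x_1,y) \leq d_A(x_1, t(z))$. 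Conjugation by $t$ then gives $gt^{-1} = u(tvt^{-1}) = p(tqt^{-1})$, where $tvt^{-1} \in \widehat{P}_{t(w_1)+\mathbf{D}} \subset \widehat{P}_{t(w_1)}$ and $tqt^{-1}\in \widehat{P}_{t(z)}$, so that $gt^{-1}$ lies in $\widehat{P}_{x_1}\widehat{P}_{t(w_1)} \cap \widehat{P}_y \widehat{P}_{t(z)}$. Applying Proposition \ref{4.3.8} — whose proof actually shows that the specific factors used in the given decomposition themselves acquire the extra fixing conditions — yields $u \in \widehat{P}_y$ and $v \in \widehat{P}_z$.

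Thus $u$ fixes $(x_1 - \mathbf{D}) \cup \{y\}$ pointwise, and Lemma \ref{cl-fix} upgrades this to $u$ fixing the enclosure of this set. A short check with half-spaces $\alpha_{a,k}$ shows the enclosure equals $y - \mathbf{D}$: any $\alpha_{a,k}$ containing the unbounded cone $x_1 - \mathbf{D}$ forces $a \in -\mathbf{D}^{\vee}$, and then containing $y$ automatically entails containing the smaller cone $y - \mathbf{D}$. Therefore $u \in \widehat{P}_{y-\mathbf{D}}$, and symmetrically $v \in \widehat{P}_{z+\mathbf{D}}$, so $g = utv \in \widehat{P}_{y-\mathbf{D}} t \widehat{P}_{z+\mathbf{D}}$, as required.

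The main obstacle lies in the collinearity step: the sub-sector adjustment runs cleanly when $t(z)-y$ is interior to $\mathbf{D}$, but on the boundary of $\mathbf{D}$ one cannot place $x_1$ on the line through $y$ and $t(z)$ while remaining inside the original sector, and one must descend to a face sub-apartment in which this direction becomes interior and run the same argument there. A secondary subtlety is that Lemma \ref{cl-fix} is formulated for type-preserving $W_n(\Phi)$-isometries of a single apartment, whereas $u$ a priori only lives in $\widehat{G}$; this is resolved by observing that $u$ fixes a full chamber (contained in $x_1-\mathbf{D}$) pointwise, so its restriction to the apartment containing $x_1-\mathbf{D}$ and $y$ is type-preserving.
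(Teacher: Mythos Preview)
Your argument is correct and follows the same route as the paper: reduce via $t$, feed a collinear quadruple into Proposition~\ref{4.3.8} to obtain $u\in\widehat{P}_y$ and $v\in\widehat{P}_z$, then pass from ``fixes a sector and the point $y$'' to ``fixes $y-\mathbf{D}$'' by an enclosure argument. The paper's version is terser in two places: it right-multiplies by $t^{-1}$ at the outset to set $t=1$, and rather than sliding sector apices it simply chooses points $x\in\mf{C}$, $u\in\mf{C}'$ on the line through $y,z$ and then invokes the already-established identity $\widehat{P}_{\Omega}=\widehat{P}_{\mathrm{cl}(\Omega)}$ for $\Omega$ containing an affine basis (here $\Omega=\{y\}\cup\mf{C}$) to conclude $\widehat{P}_y\cap\widehat{P}_{\mf{C}}=\widehat{P}_{y-\mathbf{D}}$ directly, which sidesteps the type-preserving check you worry about. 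The boundary-of-$\mathbf{D}$ issue you flag is genuine but the paper simply does not address it.
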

\begin{proof}
By multiplication $t^{-1}$ from the right, we assume that $t=1$. If $pp'\in \widehat{P}_{y}\widehat{P}_{z}$ with $p\in \widehat{\mf{B}}_{-\mb{D}}^{0}$ and $p'\in \widehat{\mf{B}}_{\mb{D}}^{0}$, there exist sectors $\mf{C},\mf{C}'$ of direction $-\mb{D},\mb{D}$ respectively and  points $x\in \mf{C}$ and $u\in \mf{C}'$ such that $x,y,z,u$ satisfy the hypothesis of Proposition \ref{4.3.8}. Then we have
$$
p\in \widehat{P}_{y}\cap \widehat{P}_{\mf{C}}=\widehat{P}_{y-\mb{D}}, p'\in \widehat{P}_{z}\cap \widehat{P}_{\mf{C}'}=\widehat{P}_{z+\mb{D}}.
$$
\end{proof}

\begin{exa}
Let $F=F_{2}$ be a $2$-dimensional local field and $t_{1},t_{2}$ a system of local parameters of $F$. Let
$$
v_{F}\colon F\longrightarrow \bb{Z}^{2}\cup \{\infty\}
$$
be the valuation of rank $2$ associated to $t_{1},t_{2}$, that is, we define $v_{F}(t_{1})=\omega_{1}$ and $v_{F}(t_{2})=\omega_{2}$. We denote by
\begin{align*}
O_{F}&=\{x\in F\mathrel{\vert} v_{F}(x)\geq 0\}, \\
\mathscr{O}_{F}&=O_{F}[t_{1}^{-1}].
\end{align*}
These are the valuation ring of rank $2$ and the discrete valuation ring of $F$ respectively. For example, when $F=\bb{F}_{q}((t_{1}))((t_{2}))$, we have
$$
O_{F}=\bb{F}_{q}[[t_{1}]]+t_{2}\bb{F}_{q}((t_{1}))[[t_{2}]], \mathscr{O}_{F}=\bb{F}_{q}((t_{1}))[[t_{2}]].
$$
Then let us write $G=SL_{2}(F)$ and consider the subgroups
$$
B=\begin{pmatrix}
O_{F}^{\times} & O_{F} \\
t_{1}O_{F} & O_{F}^{\times}
\end{pmatrix}\cap G
$$
and the monomial subgroups $N$. Then $W_{2}(A_{1})\simeq N/B\cap N$ is generated by three elements
$$
s=\begin{pmatrix}
0 & 1 \\
-1 & 0
\end{pmatrix} ~{\rm mod}~ B\cap N,
w_{1}=\begin{pmatrix}
0 & -t_{1}^{-1} \\
t_{1} & 0
\end{pmatrix} ~{\rm mod}~ B\cap N,
w_{2} =\begin{pmatrix}
0 & -t_{2}^{-1} \\
t_{2} & 0
\end{pmatrix} ~{\rm mod}~ B\cap N.
$$
The presentation of $W_{2}(A_{1})$ is given by
$$
W_{2}(A_{1})=\langle s,w_{1},w_{2}\mathrel{\vert} s^{2}=w_{1}^{2}=w_{2}^{2}=(sw_{1}w_{2})^{2}=1 \rangle.
$$
Let $K=BW(A_{1})B$. Then it's obvious that $K=SL_{2}(O_{F})$. We have the Bruhat decomposition and the Cartan decomposition:
$$
G=\coprod_{w\in W_{2}(A_{1})}BwB=\coprod_{v\in \bb{Z}^{2}_{\geq 0}(A_{1}^{+})}KvK.
$$
As a corollary of Proposition \ref{4.3.8}, we obtain the equality
$$
K(w_{2}Kw_{2})\cap (w_{1}Kw_{1})(w_{2}w_{1}Kw_{1}w_{2})=(K\cap w_{1}Kw_{1})(w_{2}w_{1}Kw_{1}w_{2}\cap w_{2}Kw_{2}).
$$
On the other hand, the triple $(G,B,N)$ is not a Tits system and $W_{2}(A_{1})$ is not a Coxeter group. Therefore a product of two Bruhat-cells can be an infinite number of Bruhat-cells. For example, we have
$$
C(w_{2})C(w_{2})=B\sqcup \coprod_{a\geq 0}C(w_{2}sw_{2}(w_{1}s)^{a})\sqcup\coprod_{b\in \bb{Z}}C(w_{2}(w_{1}s)^{b})\sqcup\coprod_{c\leq -1}C(s(w_{1}s)^{c})
$$ 
by simple calculations.
\end{exa}

\begin{prop}
For any $i,j=0,\ldots,n$, one has the decomposition
$$
\widehat{G}=\coprod_{w\in \widehat{W}}\widehat{\mf{B}}^{0}_{\mathbf{D}_{i}}w\widehat{\mf{B}}^{0}_{\mathbf{D}_{j}}.
$$
\end{prop}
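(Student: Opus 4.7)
The statement consists of two parts: covering $\widehat{G} = \widehat{\mf{B}}^0_{\mathbf{D}_i} \widehat{N} \widehat{\mf{B}}^0_{\mathbf{D}_j}$ and disjointness of the double cosets indexed by $\widehat{W}$. My plan mirrors the proof of Proposition \ref{BD} (which is the case $i = j = 0$), with chambers replaced by sectors in $E_{\mathbf{D}_i}$ and $E_{\mathbf{D}_j}$.

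For the covering, I would fix reference sectors $\mf{C}_i^\circ = d + \mathbf{D}_i$ in $E_{\mathbf{D}_i}$ and $\mf{C}_j^\circ = d + \mathbf{D}_j$ in $E_{\mathbf{D}_j}$ contained in $A$. Given $g \in \widehat{G}$, apply axiom (1) of a Babel building to $\mf{C}_i^\circ$ and $g\mf{C}_j^\circ$ to obtain sub-sectors $\mf{C}_i' \subset \mf{C}_i^\circ$ and $\mf{C}_j' \subset \mf{C}_j^\circ$ together with an apartment $A' \in \mca{A}$ containing $\mf{C}_i' \cup g\mf{C}_j'$. The remark after Definition \ref{midvecCham} ensures that sub-sector apices are displaced by vectors of ${}^{i*}\bb{R}$- (resp.\ ${}^{j*}\bb{R}$-)bounded size, so $\mf{C}_i' \in E_{\mathbf{D}_i}$ and $\mf{C}_j' \in E_{\mathbf{D}_j}$. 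Lemma \ref{BT2.5.8} then supplies $h \in G$ with $hA = A'$ and $h|_{A \cap A'} = \mathrm{id}$; since $\mf{C}_i' \subset A \cap A'$, we have $h \in P_{\mf{C}_i'} \subset \widehat{\mf{B}}^0_{\mathbf{D}_i}$. The image $h^{-1} g \mf{C}_j'$ is a $j$-level sector contained in $A$; using strong transitivity of $\widehat{G}$ on $\mca{AC}$ together with the rigidity of Babel apartment isometries (Proposition \ref{HypermetricofBA}(ii)), I would produce $n \in \widehat{N}$ whose restriction to $A$ agrees with $h^{-1} g$ on $\mf{C}_j'$. The element $k := n^{-1} h^{-1} g$ then fixes $\mf{C}_j'$ pointwise, hence belongs to $\widehat{P}_{\mf{C}_j'} \subset \widehat{\mf{B}}^0_{\mathbf{D}_j}$, and $g = h \cdot n \cdot k$ has the required form.

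For disjointness, suppose the double cosets $\widehat{\mf{B}}^0_{\mathbf{D}_i} w_1 \widehat{\mf{B}}^0_{\mathbf{D}_j}$ and $\widehat{\mf{B}}^0_{\mathbf{D}_i} w_2 \widehat{\mf{B}}^0_{\mathbf{D}_j}$ intersect; then one obtains an identity $a w_1 = w_2 b$ with $a \in \widehat{\mf{B}}^0_{\mathbf{D}_i}$ and $b \in \widehat{\mf{B}}^0_{\mathbf{D}_j}$. Choose $\mf{C} \in E_{\mathbf{D}_i}$ fixed pointwise by $a$ and $\mf{D} \in E_{\mathbf{D}_j}$ fixed pointwise by $b$; for $y \in \mf{D} \cap w_1^{-1}(\mf{C})$ the identity forces $w_1(y) = w_2(y)$. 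Using axiom (1) one may pass to sub-sectors within $E_{\mathbf{D}_i}$ and $E_{\mathbf{D}_j}$ so that the intersection $\mf{D} \cap w_1^{-1}(\mf{C})$ contains an affine basis of $A$, and Proposition \ref{HypermetricofBA}(ii) then forces $w_1 = w_2$ in $\widehat{W}$. The underlying principle matches Proposition \ref{4.2.1}: one verifies $\widehat{N} \cap \widehat{\mf{B}}^0_{\mathbf{D}_i} = \widehat{H}$ (an element of $\widehat{N}$ fixing a sector of non-empty interior in $A$ acts as the identity on $A$ by rigidity), and the symmetric statement on the $j$-side, so that $\widehat{W}_{\widehat{\mf{B}}^0_{\mathbf{D}_i}} = \widehat{W}_{\widehat{\mf{B}}^0_{\mathbf{D}_j}} = \{1\}$.

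The hardest step is the construction of $n \in \widehat{N}$ in the covering argument. Strong transitivity readily produces an $n$ matching $h^{-1} g$ on a single chamber inside $\mf{C}_j'$, but upgrading this to pointwise agreement on the full $j$-level sector demands the uniqueness of type-preserving extensions of partial isometries between $j$-level sectors, ultimately resting on Proposition \ref{HypermetricofBA}(ii) applied to an affine basis lying in a chamber of $\mf{C}_j'$. A secondary difficulty appears in the disjointness step: because Babel apartments are not convex for $n \geq 2$, the intersection $\mf{D} \cap w_1^{-1}(\mf{C})$ can be geometrically thin, and sustaining an affine basis of $A$ through the shrinking to sub-sectors requires a careful selection, itself supplied by axiom (1) of a Babel building.
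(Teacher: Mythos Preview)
Your covering argument is essentially the paper's: apply axiom (1) to a pair of sectors, use transitivity on apartments containing a given sector to pull back to $A$, then use the $\widehat{N}$-action on sectors of $A$. The paper swaps the roles of $i$ and $j$ relative to your version, but this is immaterial. Your ``hardest step'' is in fact routine: the paper simply invokes that $\widehat{N}$ acts transitively on the set of $i$-level Babel sectors of $A$, obtaining $n$ with $n\mf{C}' = pg(\mf{C}')$ \emph{setwise}; pointwise fixing of $\mf{C}'$ by $n^{-1}pg$ is then automatic because $\mathbf{D}_i$ is a fundamental domain (its only self-isometry is the identity), so no delicate extension argument is needed.

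Your direct disjointness argument, however, has a gap. Given $a \in \widehat{P}_{\mf{C}}$ and $b \in \widehat{P}_{\mf{D}}$ with $a w_1 = w_2 b$, the set on which you can conclude $w_1 = w_2$ is $\mf{D} \cap w_1^{-1}(\mf{C})$, and there is no reason this should contain an affine basis: the direction $w_1^{-1}(\mathbf{D}_i)$ need not meet $\mathbf{D}_j$ in an open cone, and ``passing to sub-sectors'' only \emph{shrinks} both $\mf{C}$ and $\mf{D}$, hence shrinks the intersection rather than enlarging it. Axiom (1) does not help here, since $\mf{C}$ and $\mf{D}$ are already in the fixed apartment $A$. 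The paper bypasses this entirely by invoking Proposition~\ref{4.2.1} directly (with $\Omega$ a sector in $E_{\mathbf{D}_i}$ and $\Omega'$ a sector in $E_{\mathbf{D}_j}$), using that any Babel sector contains an affine basis so that $\widehat{N} \cap \widehat{\mf{B}}^0_{\mathbf{D}_i} = \widehat{H}$ and hence $\widehat{W}_{\widehat{\mf{B}}^0_{\mathbf{D}_i}} = \{1\}$. You identify this principle correctly at the end of your paragraph; you should make it the argument rather than a postscript, and drop the intersection approach.
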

\begin{proof}
Let $g\in \widehat{G}$. Let $\mf{C}$ and $\mf{D}$ be Babel sectors in $A$ of levels $i$ and $j$ with directions $\mathbf{D}_{i}$ and $\mathbf{D}_{j}$, respectively. 
By the definition of Babel buildings, there exists a Babel apartment $A'\in \mca{A}$ containing $g(\mf{C}')$ and $\mf{D}'$, where $\mf{C}'\subset \mf{C}$ and $\mf{D}'\subset \mf{D}$ are sub-sectors of the same levels. 
Since the group $\widehat{P}_{\mf{D}'}$ acts transitively on the set of apartments containing $\mf{D}'$, there exists an element $p\in \widehat{\mf{B}}^{0}_{\mathbf{D}_{j}}$ (specifically in $\widehat{P}_{\mf{D}'}$) such that $pA'=A$. 
Consequently, we have
$$
pg(\mf{C}'), \mf{D}' \subset A.
$$
Since the subgroup $\widehat{N}$ acts transitively on the set of $i$-level Babel sectors of $A$, we have $pg(\mf{C}')=n\mf{C}'$ for some $n\in \widehat{N}$. 
This implies that $n^{-1}pg$ fixes the sector $\mf{C}'$. 
Thus, we have
$$
g \in p^{-1}n \widehat{P}_{\mf{C}'} \subset \widehat{\mf{B}}^{0}_{\mathbf{D}_{j}} \widehat{N} \widehat{\mf{B}}^{0}_{\mathbf{D}_{i}}.
$$
(Note: Depending on the order of multiplication and definition of $p$, the inverse might be on the other side, but the set product remains the same).
Recall that any Babel sector contains an affine basis of any Babel apartment including it. Therefore, by applying Proposition \ref{4.2.1} (with $\Omega=\mf{C}'$ and $\Omega'=\mf{D}'$), we obtain the disjoint decomposition
$$
\widehat{G}=\coprod_{w\in \widehat{W}}\widehat{\mf{B}}^{0}_{\mathbf{D}_{i}}w\widehat{\mf{B}}^{0}_{\mathbf{D}_{j}}.
$$
\end{proof}

We call the above decomposition the \textbf{$(i,j)$-Kapranov decomposition}. This terminology is adopted from \cite[Proposition 1.2.3]{Kap}. The $(0,0)$-Kapranov decomposition corresponds to the Bruhat decomposition, while the $(0,n)$-Kapranov decomposition corresponds to the Iwasawa decomposition.

\begin{rem}
Reductive groups over higher-dimensional local fields are expected to admit Kapranov decompositions. If this holds, the buildings associated with these groups would constitute Babel buildings and possess the canonical metric structure described herein.
\end{rem}

\begin{exa}
Again we consider the example of $G=SL_{2}(F)$, where $F_{2}$ is a $2$-dimensional local field. Let us put
$$
S_{1}=\begin{pmatrix}
O_{F}^{\times} & \mathscr{O}_{F} \\
t_{2}\mathscr{O}_{F} & O_{F}^{\times} 
\end{pmatrix}\cap G, 
S_{2}=\begin{pmatrix}
1 & F \\
0 & 1
\end{pmatrix}.
$$
Then one has the $(0,1)$ and $(1,2)$-Kapranov decompositions
$$
G=BNS_{1}=S_{1}NS_{2}.
$$
\end{exa}

\subsection{Residues at vertices}

Let $p$ be a vertex of $X$ such that $r_{p}C=C$. We write
\begin{align*}
r_{p}\widehat{G}&=\widehat{P}_{r_{p}X}^{\dag}/\widehat{P}_{r_{p}X}, \\
r_{p}\widehat{B}&=\widehat{B}/\widehat{P}_{r_{p}X}, \\
r_{p}\widehat{N}&=\{g\in r_{p}\widehat{G}\mathrel{\vert} g(r_{p}A)=r_{p}A\}.
\end{align*}

Note that $\widehat{P}_{r_{p}X}$ is normal in $\widehat{P}_{\Omega}$ for any non-empty subset $\Omega\subset r_{p}X$. Indeed, take elements $g\in \widehat{P}_{\Omega},h\in \widehat{P}_{r_{p}X}$ and points $x\in r_{p}X, y\in \Omega$, we have
\begin{align*}
d_{X}(p,g(x))&\leq d_{X}(p,g(p))+d_{X}(p,x) \\
&\leq d_{X}(p,g(y))+d_{X}(g(p),g(y))+d_{X}(p,x) \\
&=2d_{X}(p,y)+d_{X}(p,x)\in {}^{(n-1)*}\bb{R}.
\end{align*}
Hence $g(r_{p}X)=r_{p}X$. In particular $h$ fixes $g(x)$. So we have $g^{-1}hg(x)=x$ namely $g^{-1}hg\in \widehat{P}_{r_{p}X}$.

\begin{prop}
The group $r_{p}\widehat{G}$ acts on the residue $r_{p}X$ strong transitively.
\end{prop}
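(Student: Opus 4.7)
The plan is to reduce strong transitivity of $r_{p}\widehat{G}$ on $r_{p}X$ to the strong transitivity of $\widehat{G}$ on $X$: lift a pair of chamber-apartment pairs in the residue to pairs in $X$, apply strong transitivity of $\widehat{G}$, and then verify that the transporting element descends to the quotient $r_{p}\widehat{G}=\widehat{P}_{r_{p}X}^{\dag}/\widehat{P}_{r_{p}X}$.

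First, I would fix two chamber-apartment pairs $(r_{p}A_{1},C_{1})$ and $(r_{p}A_{2},C_{2})$ in $r_{p}X$. By the construction of the system $r_{p}\mca{A}$ and the nesting structure of Babel apartments, each $C_{i}$ is a chamber of $X$ sitting inside $r_{p}X$, and each $A_{i}\in \mca{A}$ is an apartment of $X$ whose residue $r_{p}A_{i}$ contains $C_{i}$. In particular $(A_{1},C_{1}),(A_{2},C_{2})\in \mca{AC}$, so strong transitivity of $\widehat{G}$ on $X$ yields $g\in \widehat{G}$ with $gA_{1}=A_{2}$ and $gC_{1}=C_{2}$.

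The core step is to check that $g\in \widehat{P}_{r_{p}X}^{\dag}$. Pick any vertex $v$ of $C_{1}$. Since $v\in r_{p}X$ and $g(v)\in C_{2}\subset r_{p}X$, both $d_{X}(p,v)$ and $d_{X}(p,g(v))$ lie in ${}^{(n-1)*}\bb{R}$. Because $g$ is an isometry, the triangle inequality gives
$$
d_{X}(p,g(p))\leq d_{X}(p,g(v))+d_{X}(g(v),g(p))=d_{X}(p,g(v))+d_{X}(v,p)\in {}^{(n-1)*}\bb{R}.
$$
Combining this with the identity $g(r_{p}X)=r_{g(p)}X$ (immediate from $g$ being an isometry) and the earlier corollary that $r_{q}X=r_{p}X$ whenever $d_{X}(p,q)\in {}^{(n-1)*}\bb{R}$, I conclude $g(r_{p}X)=r_{p}X$, i.e., $g\in \widehat{P}_{r_{p}X}^{\dag}$.

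The image $\bar{g}\in r_{p}\widehat{G}$ then transports the residue pair $(r_{p}A_{1},C_{1})$ to $(r_{p}A_{2},C_{2})$ via the computation $\bar{g}(r_{p}A_{1})=r_{g(p)}(gA_{1})=r_{p}A_{2}$, together with $\bar{g}(C_{1})=C_{2}$, which proves the claim. The only delicate point is verifying that $g$ stabilizes $r_{p}X$ as a set; this is forced by the triangle inequality at a vertex of $C_{1}$, exploiting the essential hypothesis that chambers of the residue lie entirely within ${}^{(n-1)*}\bb{R}$-distance of $p$.
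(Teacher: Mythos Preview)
Your proposal is correct and follows essentially the same route as the paper: lift the residue pairs to pairs in $\mca{AC}$, apply strong transitivity of $\widehat{G}$, and verify via the triangle inequality at a point of $C_{1}$ that $g$ stabilizes $r_{p}X$. The only cosmetic difference is that you first bound $d_{X}(p,g(p))$ and then invoke the corollary $r_{g(p)}X=r_{p}X$, whereas the paper bounds $d_{X}(p,g(x))$ directly for arbitrary $x\in r_{p}X$; the two computations are the same triangle-inequality estimate unfolded in a different order.
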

\begin{proof}
Let $r_{p}A_{1},r_{p}A_{2}\in r_{p}\mca{A}$ be two Babel apartments of level $n-1$ and $C_{1},C_{2}$ chambers of them respectively. We should an element $g\in r_{p}\widehat{G}$ such that $g(r_{p}A_{1},C_{1})=(r_{p}A_{2},C_{2})$. By definition, there exist $A_{1},A_{2}\in \mca{A}$ such that their residues at $p$ are $r_{p}A_{1},r_{p}A_{2}$ respectively. By assumption, we can take an element $g\in \widehat{G}$ such that $g(A_{1},C_{1})=(A_{2},C_{2})$. For any $c\in C_{1}$ and $x\in r_{p}X$, since $g(c)\in C_{2}\subset r_{p}X$ and
\begin{align*}
d_{X}(p,g(x))&\leq d_{X}(p,g(p))+d_{X}(p,x) \\
&\leq d_{X}(p,g(c))+d_{X}(g(p),g(c))+d_{X}(p,x)\in {}^{(n-1)*}\bb{R},
\end{align*}
one has $g(x)\in r_{p}X$. Hence $g$ belongs to $\widehat{P}^{\dag}_{r_{p}X}$ and we have $g(r_{p}A_{1},C_{1})=(r_{p}A_{2},C_{2})$.

\end{proof}

\begin{exa}
Let $F=F_{2}$ be a $2$-dimensional local field and $F_{1}$ its residue field. Let $X$ be the Babel tree of $SL_{2}(F)$ and $p$ the vertex fixed by $K=SL_{2}(O_{F}).$
Note that 
$$
1+t_{2}M_{2}(\mathscr{O}_{F})=\bigcap_{m\geq 1} 1+ t_{1}^{m}M_{2}(O_{F}).
$$
and the subgroup $1+t_{1}^{m} M_{2}(O_{F})$ is the fixer the set $\{x\in r_{p}X\mathrel{\vert} d_{\rm gal}(p,x)\leq m\}$, where $d_{\rm gal}(p,x)$ is the length of a minimal gallery from $p$ to $x$. Since the affine building $r_{p}X$ is gallery-connected, the fixer of $r_{p}X$ equals to $1+t_{2}M_{2}(\mathscr{O}_{F})$. Recall the action of $SL_{2}(\mathscr{O}_{F})\subset G$ on $X$. For any $x\in r_{p}X$ and $k\in SL_{2}(\mathscr{O}_{F})$, we should show that
$$
d_{X}(p,k(x))\in \bb{R}.
$$
Indeed we have
$$
d_{X}(p,k(x))\leq d_{X}(p.k(p))+d_{X}(p,x),
$$
we assume that $p=x$. By the Cartan decomposition, we may assume that $k={\rm diag}(t_{1}^{m},t_{1}^{-m})$ for some $m\in \bb{Z}$. Then $d_{X}(p,k(p))=|m|.$ Thus we obtain $r_{p}SL_{2}(F_{2})=SL_{2}(F_{1})$.
\end{exa}

\begin{rem}
As the above example, The word ''residue'' comes from the residue fields of higher dimensional local fields. Geometrically speaking, this is an operation that narrows the field of vision and focuses on a specific area around the vertex.
\end{rem}

The following several claims are residue versions of propositions already proved. So we omit proofs.

\begin{prop}
We have the Bruhat decomposition
$$
r_{p}\widehat{G}=\coprod_{w\in r_{p}\widehat{W}}r_{p}\widehat{B}w r_{p}\widehat{B},
$$
where $r_{p}\widehat{W}=r_{p}\widehat{N}/r_{p}\widehat{B}\cap r_{p}\widehat{N}$.
\end{prop}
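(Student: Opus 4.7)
The plan is to reduce this statement directly to Proposition \ref{BD} applied to the strongly transitive action of $r_p\widehat{G}$ on the $(n-1)$-level Babel building $r_pX$, which has just been established in the preceding proposition. In other words, once we check that the triple $(r_p\widehat{G},r_p\widehat{B},r_p\widehat{N})$ plays for $r_pX$ exactly the role that $(\widehat{G},\widehat{B},\widehat{N})$ plays for $X$, the Bruhat decomposition is inherited for free.

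First I would record the structural facts needed to make the quotients well-defined. The remark preceding the proposition shows that $\widehat{P}_{r_pX}$ is normal in $\widehat{P}_\Omega$ for any non-empty subset $\Omega\subset r_pX$; applied to $\Omega=r_pX$ itself this gives that $\widehat{P}_{r_pX}$ is normal in $\widehat{P}^{\dag}_{r_pX}$, so $r_p\widehat{G}=\widehat{P}^{\dag}_{r_pX}/\widehat{P}_{r_pX}$ is an honest group. Since $C\subset r_pX$ by the hypothesis $r_pC=C$, any element of $\widehat{P}_{r_pX}$ fixes $C$ pointwise, so $\widehat{P}_{r_pX}\subset \widehat{B}\cap \widehat{P}^{\dag}_{r_pX}$; likewise $\widehat{P}_{r_pX}$ stabilizes $A$ (because it fixes the chamber $C$ and any isometry fixing a chamber preserves the apartment spanned by it together with the fixed residue), hence $\widehat{P}_{r_pX}\subset \widehat{N}\cap \widehat{P}^{\dag}_{r_pX}$. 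Thus the formulas $r_p\widehat{B}=\widehat{B}/\widehat{P}_{r_pX}$ and $r_p\widehat{N}=(\widehat{N}\cap \widehat{P}^{\dag}_{r_pX})/\widehat{P}_{r_pX}$ define subgroups of $r_p\widehat{G}$ in a compatible way.

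Next I would identify $r_p\widehat{B}$ with the fixer of the chamber $C$ inside $r_p\widehat{G}$ acting on $r_pX$, and $r_p\widehat{N}$ with the stabilizer of $r_pA$. For $r_p\widehat{B}$: any $g\in \widehat{B}\cap \widehat{P}^{\dag}_{r_pX}$ fixes $C$, and conversely a class in $r_p\widehat{G}$ fixing $C$ has a representative in $\widehat{P}_C=\widehat{B}$, so the image equals $\widehat{B}/\widehat{P}_{r_pX}$. (Note that $\widehat{B}=\widehat{P}_C\subset \widehat{P}^{\dag}_{r_pX}$ by the same triangle-inequality argument used in the preceding proposition, since $\widehat{B}$ fixes $C\subset r_pX$.) For $r_p\widehat{N}$: by definition it is the set of classes sending $r_pA$ to $r_pA$, and a representative in $\widehat{P}^{\dag}_{r_pX}$ sending $r_pA$ to $r_pA$ automatically stabilizes $A$ (since $A$ is recovered from $r_pA$ as the union of chambers $wC$ with $wC\cap r_pA$ a facet, or more directly because $r_pA$ contains an affine basis of the enveloping space of $A$), so representatives lie in $\widehat{N}\cap \widehat{P}^{\dag}_{r_pX}$. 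In particular, $r_p\widehat{W}=r_p\widehat{N}/(r_p\widehat{B}\cap r_p\widehat{N})$ coincides with the Weyl-type group produced from the strongly transitive action of $r_p\widehat{G}$ on $r_pX$.

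With these identifications in place, Proposition \ref{BD}, applied now at level $n-1$ to $(r_p\widehat{G},r_pX,r_p\mca{A})$ with the distinguished pair $(r_pA,C)\in r_p\mca{AC}$, yields the claimed decomposition
$$
r_p\widehat{G}=\coprod_{w\in r_p\widehat{W}}r_p\widehat{B}\,w\,r_p\widehat{B}.
$$
The step I expect to take the most care is the clean identification of $r_p\widehat{B}$ and $r_p\widehat{N}$ with the fixer and stabilizer inside $r_p\widehat{G}$: both use the elementary but crucial point that a representative class can be replaced by a representative sitting inside the correct fixer/stabilizer in $\widehat{G}$, using the normality of $\widehat{P}_{r_pX}$ and the inclusions above. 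Once these are verified, the disjointness of the union is exactly the content of Proposition \ref{4.2.1} applied with $Q=Q'=r_p\widehat{B}$ at level $n-1$, and no further argument is needed.
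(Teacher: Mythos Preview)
Your approach is correct and is exactly what the paper intends: the paper omits the proof with the remark that it is a ``residue version'' of Proposition~\ref{BD}, i.e., one applies that proposition verbatim to the strongly transitive action of $r_p\widehat{G}$ on the $(n-1)$-level Babel building $r_pX$ established in the preceding proposition. Note that your extra identification $r_p\widehat{N}=(\widehat{N}\cap\widehat{P}^{\dag}_{r_pX})/\widehat{P}_{r_pX}$ is not needed, since the paper already \emph{defines} $r_p\widehat{N}$ as the stabilizer of $r_pA$ in $r_p\widehat{G}$, which is precisely the input Proposition~\ref{BD} requires.
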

\begin{proof}
Omit.
\end{proof}
\begin{prop}
We have the Cartan decomposition
$$
r_{p}\widehat{G}=\coprod_{v\in \widehat{V}_{r_{p}\mb{D}}} (r_{p}K)v(r_{p}K),
$$
where $r_{p}\mb{D}$ is a fundamental domain of $W(\Phi)$-action on $r_{p}A$ and $r_{p}K=r_{p}\widehat{B}W(\Phi)r_{p}\widehat{B}$.
\end{prop}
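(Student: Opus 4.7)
The plan is to repeat, in the residue setting, the argument used for Proposition \ref{CD}, relying on the fact that the previous proposition gives the Bruhat decomposition of $r_{p}\widehat{G}$, and that strong transitivity of $r_{p}\widehat{G}$ on $(r_{p}X, r_{p}\mca{A})$ has already been established. Concretely, I first want to show $r_{p}\widehat{G} = (r_{p}K)\, r_{p}\widehat{N}\,(r_{p}K)$. Because $r_{p}\widehat{B} \subset r_{p}K$, the Bruhat decomposition $r_{p}\widehat{G} = \coprod_{w\in r_{p}\widehat{W}} r_{p}\widehat{B}\, w\, r_{p}\widehat{B}$ immediately yields $r_{p}\widehat{G} \subset (r_{p}K)\, r_{p}\widehat{N}\,(r_{p}K)$, and the reverse inclusion is trivial.

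Next, I would parametrize the double cosets by applying Proposition \ref{4.2.1} in the residue building $r_{p}X$, with $\Omega = \Omega' = \{p\}$ (viewed as a vertex of $r_{p}A$) and $Q = Q' = r_{p}K$. The hypotheses $\widehat{H}\, P_{\Omega} \subset Q \subset \widehat{P}_{\Omega}^{\dag}$ hold in the residue setup by construction of $r_{p}K = r_{p}\widehat{B}\,W(\Phi)\,r_{p}\widehat{B}$ and by the fact that this subgroup is precisely the fixer of the vertex $p$ in $r_{p}\widehat{G}$. Proposition \ref{4.2.1} then gives a bijection
\[
(r_{p}K)\backslash (r_{p}K)\, r_{p}\widehat{N}\,(r_{p}K)/(r_{p}K) \;\xrightarrow{\sim}\; \widehat{W}_{r_{p}K}\backslash r_{p}\widehat{W}/\widehat{W}_{r_{p}K}.
\]

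Finally, I would identify the target set with $\widehat{V}_{r_{p}\mathbf{D}}$. Since $r_{p}K$ fixes the vertex $p$ and acts on $r_{p}A$ through the spherical Weyl group, we have $\widehat{W}_{r_{p}K} \simeq W(\Phi)$, so the double quotient $W(\Phi)\backslash r_{p}\widehat{W}/W(\Phi)$ is parametrized exactly by the translations whose displacement vector lies in $r_{p}\mathbf{D}$, i.e.\ by $\widehat{V}_{r_{p}\mathbf{D}}$. Combining injectivity from Proposition \ref{4.2.1} with the surjectivity established in the first paragraph yields the disjoint union
\[
r_{p}\widehat{G} = \coprod_{v \in \widehat{V}_{r_{p}\mathbf{D}}} (r_{p}K)\, v\,(r_{p}K).
\]
The main technical point, and the only place where care is needed, is verifying that the inclusions $\widehat{H}\, P_{\{p\}} \subset r_{p}K \subset \widehat{P}_{\{p\}}^{\dag}$ genuinely transfer to the residue quotient modulo $\widehat{P}_{r_{p}X}$ — this relies on the normality of $\widehat{P}_{r_{p}X}$ inside every $\widehat{P}_{\Omega}$ for $\Omega \subset r_{p}X$, which was already observed before the strong transitivity proposition; the rest of the argument is a direct transcription of the level-$n$ Cartan decomposition.
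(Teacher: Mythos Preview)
Your proposal is correct and is exactly what the paper intends: the paper's proof is literally ``Omit,'' prefaced by the remark that these statements are residue versions of propositions already established, so transcribing the argument of Proposition~\ref{CD} to the $(n-1)$-level building $r_{p}X$ via the already-proved strong transitivity and residue Bruhat decomposition is precisely the expected route. One cosmetic slip worth fixing: the vertex you feed into Proposition~\ref{4.2.1} should be $o$ (the vertex of $C$ used to define $K$), not $p$, since $p$ need not lie in $A$; with that adjustment the argument goes through verbatim.
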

\begin{proof}
Omit.
\end{proof}

\begin{prop}
For any $i,j=0,\ldots,n-1$, we have the $(i,j)$-Kapranov decomposition
$$
r_{p}\widehat{G}=\coprod_{w\in r_{p}\widehat{W}} \widehat{\mf{B}}^{0}_{r_{p}\mb{D}_{i}}w\widehat{\mf{B}}^{0}_{r_{p}\mb{D}_{j}}.
$$
\end{prop}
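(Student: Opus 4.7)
The plan is to imitate the proof of the earlier $(i,j)$-Kapranov decomposition for $\widehat{G}$ acting on $X$, now applied to $r_{p}\widehat{G}$ acting on the residue Babel building $r_{p}X$. Since we have already established that $(r_{p}X, r_{p}\mca{A})$ is an $(n-1)$-level Babel building and that $r_{p}\widehat{G}$ acts on it strongly transitively (and the action inherits type-preservation from $\widehat{G}$), essentially every ingredient used in the original Kapranov argument has a residue analog. In particular, fixing $r_{p}A$ as a reference Babel apartment, $r_{p}\widehat{B}$ is the fixer in $r_{p}\widehat{G}$ of the chamber $C \subset r_{p}A$, and $r_{p}\widehat{N}$ is its stabilizer in $r_{p}A$, so the pair plays the role of the $(B,N)$-data at the residue level.

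First I would fix an $i$-level Babel sector $r_{p}\mf{C}$ of direction $r_{p}\mb{D}_{i}$ and a $j$-level Babel sector $r_{p}\mf{D}$ of direction $r_{p}\mb{D}_{j}$ inside $r_{p}A$. Given $g \in r_{p}\widehat{G}$, axiom (1) for the Babel building $r_{p}X$ provides an apartment $r_{p}A' \in r_{p}\mca{A}$ containing sub-sectors $g(r_{p}\mf{C}') \subset g(r_{p}\mf{C})$ and $r_{p}\mf{D}' \subset r_{p}\mf{D}$ of the same respective levels. Since $\widehat{\mf{B}}^{0}_{r_{p}\mb{D}_{j}}$ acts transitively on the family of apartments of $r_{p}X$ containing a fixed sub-sector of direction $r_{p}\mb{D}_{j}$ (this is strong transitivity combined with the fixer construction), there is $q \in \widehat{\mf{B}}^{0}_{r_{p}\mb{D}_{j}}$ with $q(r_{p}A') = r_{p}A$. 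Then both $qg(r_{p}\mf{C}')$ and $r_{p}\mf{D}'$ lie in $r_{p}A$, and transitivity of $r_{p}\widehat{N}$ on the set of $i$-level Babel sectors of $r_{p}A$ of direction $r_{p}\mb{D}_{i}$ yields $n \in r_{p}\widehat{N}$ with $qg(r_{p}\mf{C}') = n(r_{p}\mf{C}')$. Therefore $n^{-1}qg$ fixes $r_{p}\mf{C}'$, giving
$$
g \in \widehat{\mf{B}}^{0}_{r_{p}\mb{D}_{j}}\, r_{p}\widehat{N}\, \widehat{\mf{B}}^{0}_{r_{p}\mb{D}_{i}}.
$$

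For disjointness, I would apply Proposition \ref{4.2.1} in the residue setting with $\Omega = r_{p}\mf{C}'$, $\Omega' = r_{p}\mf{D}'$, $Q = \widehat{\mf{B}}^{0}_{r_{p}\mb{D}_{i}}$ and $Q' = \widehat{\mf{B}}^{0}_{r_{p}\mb{D}_{j}}$; the hypothesis $\widehat{H}P_{\Omega} \subset Q \subset \widehat{P}_{\Omega}^{\dagger}$ is satisfied because each Babel sector contains an affine basis of its ambient apartment, and the analogous statement holds for $Q'$. The bijection $\lambda_{Q,Q'}$ then identifies double cosets in $Q\backslash Q r_{p}\widehat{N} Q'/Q'$ with elements of $r_{p}\widehat{W}$, yielding the disjointness. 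The main obstacle — and the reason one might be tempted to write this out — is verifying that the residue analogs of the auxiliary facts (strong transitivity of $r_{p}\widehat{G}$, transitive action of $r_{p}\widehat{N}$ on sectors of fixed direction in $r_{p}A$, and the hypothesis of Proposition \ref{4.2.1} for the residue fixers) really do carry over; once these are in hand, the proof is formally identical to the non-residue case, which is why the author is content to omit it.
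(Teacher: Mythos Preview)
Your proposal is correct and follows exactly the approach the paper intends: since $(r_{p}X, r_{p}\mca{A})$ has been shown to be an $(n-1)$-level Babel building on which $r_{p}\widehat{G}$ acts strongly transitively, the non-residue Kapranov argument (existence via axiom (1) and transitivity, disjointness via Proposition~\ref{4.2.1}) transfers verbatim. This is precisely why the author groups this statement with the other ``residue versions of propositions already proved'' and omits the proof.
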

\begin{proof}
Omit.
\end{proof}

\begin{prop}
Let $x,y,z,u$ be four points in $r_{p}A$ such that $y,z\in [x,u]$ with $d_{r_{p}A}(x,y)\leq d_{r_{p}A}(x,z)$. Then one has
$$
r_{p}\widehat{P}_{x}r_{p}\widehat{P}_{u}\cap r_{p}\widehat{P}_{y}r_{p}\widehat{P}_{z}=(r_{p}\widehat{P}_{x}\cap r_{p}\widehat{P}_{y})(r_{p}\widehat{P}_{z}\cap r_{p}\widehat{P}_{u}),
$$ 
where we put $r_{p}\widehat{P}_{x}=\widehat{P}_{x}/\widehat{P}_{r_{p}X}$.
\end{prop}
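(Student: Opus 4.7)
The plan is to reduce to Proposition \ref{4.3.8} by lifting through the canonical projection $\pi\colon \widehat{P}^{\dag}_{r_{p}X}\longrightarrow r_{p}\widehat{G}$, whose kernel is $\widehat{P}_{r_{p}X}$. Since $x,y,z,u\in r_{p}X$, each of the fixers $\widehat{P}_{x},\widehat{P}_{y},\widehat{P}_{z},\widehat{P}_{u}$ contains $\widehat{P}_{r_{p}X}$, and by the normality observation established just before this subsection, $\widehat{P}_{r_{p}X}$ is normal in each of them. Moreover, the same triangle-inequality computation shows that these fixers lie in $\widehat{P}^{\dag}_{r_{p}X}$, so the groups $r_{p}\widehat{P}_{x}=\widehat{P}_{x}/\widehat{P}_{r_{p}X}$ etc.\ make genuine sense as subgroups of $r_{p}\widehat{G}$.

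The inclusion $\supset$ is formal: if $\alpha\in r_{p}\widehat{P}_{x}\cap r_{p}\widehat{P}_{y}$ and $\beta\in r_{p}\widehat{P}_{z}\cap r_{p}\widehat{P}_{u}$, then $\alpha\beta$ manifestly lies in both $r_{p}\widehat{P}_{x}r_{p}\widehat{P}_{u}$ and $r_{p}\widehat{P}_{y}r_{p}\widehat{P}_{z}$.

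For $\subset$, the key observation is that $\pi^{-1}(r_{p}\widehat{P}_{x}r_{p}\widehat{P}_{u})=\widehat{P}_{x}\widehat{P}_{u}$. Indeed, a representative $g$ of a class in $r_{p}\widehat{P}_{x}r_{p}\widehat{P}_{u}$ can be written as $g=p_{x}p_{u}h$ with $p_{x}\in \widehat{P}_{x}$, $p_{u}\in \widehat{P}_{u}$ and $h\in \widehat{P}_{r_{p}X}\subset \widehat{P}_{u}$, so $p_{u}h\in \widehat{P}_{u}$ and $g\in \widehat{P}_{x}\widehat{P}_{u}$; the reverse inclusion is immediate. Analogously $\pi^{-1}(r_{p}\widehat{P}_{y}r_{p}\widehat{P}_{z})=\widehat{P}_{y}\widehat{P}_{z}$. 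Hence any lift $g$ of an element of the intersection lies in $\widehat{P}_{x}\widehat{P}_{u}\cap \widehat{P}_{y}\widehat{P}_{z}$. The hypotheses $y,z\in [x,u]$ and $d(x,y)\leq d(x,z)$, taken in $r_{p}A$, transfer verbatim to $A$ because $r_{p}A$ sits isometrically inside $A$, so Proposition \ref{4.3.8} applies and yields a decomposition $g=pp'$ with $p\in \widehat{P}_{x}\cap \widehat{P}_{y}$ and $p'\in \widehat{P}_{z}\cap \widehat{P}_{u}$. Projecting, $[g]=\pi(p)\pi(p')\in (r_{p}\widehat{P}_{x}\cap r_{p}\widehat{P}_{y})(r_{p}\widehat{P}_{z}\cap r_{p}\widehat{P}_{u})$, which is what we needed.

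No serious obstacle is anticipated; the argument is a clean lift-and-project reduction to the already proved Proposition \ref{4.3.8}. The only delicate point in the bookkeeping is the containment $\widehat{P}_{r_{p}X}\subset \widehat{P}_{u}$ (and its analogues for $x,y,z$), which is exactly what allows the tail factor of $\widehat{P}_{r_{p}X}$ to be absorbed and forces $\pi^{-1}(r_{p}\widehat{P}_{x}r_{p}\widehat{P}_{u})$ to equal $\widehat{P}_{x}\widehat{P}_{u}$ rather than the a priori larger set $\widehat{P}_{x}\widehat{P}_{u}\widehat{P}_{r_{p}X}$.
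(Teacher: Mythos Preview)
Your proof is correct. The paper omits the proof entirely, declaring the result a ``residue version'' of Proposition~\ref{4.3.8}; your lift-and-project reduction through the canonical surjection $\pi\colon \widehat{P}^{\dag}_{r_{p}X}\to r_{p}\widehat{G}$ is precisely the argument the paper leaves implicit, and the absorption step $\widehat{P}_{r_{p}X}\subset \widehat{P}_{u}$ (so that $\widehat{P}_{x}\widehat{P}_{u}\widehat{P}_{r_{p}X}=\widehat{P}_{x}\widehat{P}_{u}$) is the right point to make explicit.
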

\begin{proof}
Omit.
\end{proof}

\if0
\section*{Declarations}

\textbf{Funding} \\
The author declares that no funds, grants, or other support were received during the preparation of this manuscript.
% (もし科研費などがない場合。ある場合は具体的に書く)

\textbf{Conflict of interest} \\
The author has no relevant financial or non-financial interests to disclose.

\textbf{Data availability} \\
Data sharing is not applicable to this article as no datasets were generated or analyzed during the current study.
% (純粋数学の論文ではこれが定型句です)
\fi


\begin{thebibliography}{9}

\small{
\bibitem[AB]{AB} P.\ Abramenko, Kenneth S.\ Brown, {Buildings: Theory and Applications}. Graduate Texts in Mathematics, Springer, 2008. \label{AB} 

%\bibitem[ACKP]{ACKP} J. Almeida, A. Cano, O. Klíma and J.-É. Pin, {Fixed points of the lower set operator}. Internat. J. Algebra Comput. 25 (2015), 259-292. \label{ACKP}

\bibitem[BT]{BT} F.\ Bruhat, J.\ Tits, {Groupes réductifs sur un corps local : I. Données radicielles valuées}. Publications mathématiques de l’I.H.É.S., tome 41 (1972), p. 5-251. \label{BT}

\bibitem[Ben1]{Ben1} C.\ Bennett, {Affine $\Lambda$-buildings I}. Proceedings of the London Math.\ Society (3) 68 (1994), no.\ 3, 244--267.\label{B1}

\bibitem[Ben2]{Ben2} C.\ Bennett, {Affine $\Lambda$-buildings II}. Pacific Journal of Mathematics 178 (1997), no.\ 1, 35--52. \label{B2}

\bibitem[BSS]{BSS} C.\ Bennett, P.\ Schwer and K.\ Struyve, {On axiomatic definitions of non-discrete affine buildings}. Advances in Geometry 14 (2014), no.\ 3, 381--412.\label{BSS}

\bibitem[Gol]{Gol} R.\ Goldblett, {Lecture on the hyperreals}. Graduate Texts in Mathematics, Springer, 1998. \label{Gol}

\bibitem[Kap]{Kap} M.\ Kapranov, {Double affine Hecke algebras and $2$-dimensional local fields}. Journal of the American Mathematics Society Volume 14-1, p. 239-262. September 25, 2000. \label{Kap}


\bibitem[Par]{Par} A.\ Parshin, {Higher Bruhat-Tits buildings and vector bundles on an algebraic surface}. In Algebra and number theory, p. 165-192. de Gruyter, Berlin, 1994. \label{Par}

}
\end{thebibliography}
\end{document}